\numberwithin{equation}{section}
\numberwithin{figure}{section}
\theoremstyle{definition}
		\newtheorem{theorem}{Theorem}[section]
				\newtheorem{proposition}[theorem]{Proposition}
				\newtheorem{lemma}[theorem]{Lemma}
				\newtheorem{corollary}[theorem]{Corollary}
				\newtheorem{coro}[theorem]{Corollary}
     	        \newtheorem{definition}[theorem]{Definition}
	            \newtheorem{remark}[theorem]{Remark}
\numberwithin{equation}{section}
\newcommand*{\bR}{\ensuremath{\mathbb{R}}}
\newcommand*{\closure}[1]{\overline{#1}}
\newcommand*{\bdary}[1]{\partial #1}
\newcommand*{\Wert}{\mathord{\mbox{|\kern-1.5pt|\kern-1.5pt|}}}
\newcommand*{\ie}{\mbox{i.e.}\xspace}
\newcommand{\R}{\mathbb{R}}	
\DeclareMathOperator{\dist}{dist}
\DeclareMathOperator{\Modd}{Mod}
\DeclareMathOperator{\diam}{diam}
\DeclareMathOperator{\modulus}{Mod}
\DeclareMathOperator{\card}{card}
\def\XXint#1#2#3{{\setbox0=\hbox{$#1{#2#3}{\int}$}
  \vcenter{\hbox{$#2#3$}}\kern-.5\wd0}}
\begin{document}

\title[Porosity of the branch set]{Porosity of the branch set of discrete open mappings with controlled linear dilatation}
\author{Chang-Yu Guo}
\address[Chang-Yu Guo]{Department of Mathematics and Statistics, University of Jyv\"askyl\"a, Finland and Department of Mathematics, University of Fribourg, Switzerland}
\email{changyu.guo@unifr.ch}

\author{Marshall Williams}

\subjclass[2010]{53C17, 30C65, 58C06, 58C25}
\date{\today}
\dedicatory{Dedicated to Juha Heinonen with admiration and appreciation}
\keywords{discrete open maps, branch set, porosity, generalized manifolds, quasiregular mappings, V\"ais\"al\"a's inequality, linearly locally $n$-connected}
\thanks{C.Y.Guo was supported by the Magnus Ehrnrooth Foundation, the Finnish Cultural Foundation--Central Finland Regional Fund No. 30151735 and the Academy of Finland No.131477}

\begin{abstract}
Assume that $X$ and $Y$ are locally compact and locally doubling metric spaces, which are also generalized $n$-manifolds, that $X$ is locally linearly locally $n$-connected, and that $Y$ has bounded turning. Let $f\colon X\to Y$ be a continuous, discrete and open mapping. Let $\mathcal{B}_f$ be the branch set of $f$, \ie the set consisting of points in $X$ at which $f$ fails to be a local homeomorphism.

In this paper, addressing Heinonen's ICM 02 talk, we study the geometry of the branch set $\mathcal{B}_f$ of a quasiregular mapping between metric $n$-manifolds. In particular, we show that $\mathcal{B}_f\cap \{x\in X:H_f(x)<\infty\}$ is countably porous, as is its image $f\big(\mathcal{B}_f\cap \{x\in X:H_f(x)<\infty\}\big)$. As a corollary, $\mathcal{B}_f\cap \{x\in X:H_f(x)<\infty\}$ and its image are null sets with respect to any locally doubling measures on $X$ and $Y$, respectively. Moreover, if either $H_f(x)\leq H$ or $H_f^*(x)\leq H^*$ for all $x\in X$, then both $\mathcal{B}_f$ and $f\big(\mathcal{B}_f\big)$ are countably $\delta$-porous, quantitatively, with a computable porosity constant.

When further metric and analytic assumptions are placed on $X$, $Y$, and $f$,
our theorems generalize the well-known Bonk--Heinonen theorem and Sarvas' theorem to a large class of metric spaces. Moreover, our results are optimal in terms of the underlying geometric structures. As a direct application, we obtain the important V\"ais\"al\"a's inequality in greatest generality. Applying our main results to special cases, we solve an open problem of Heinonen--Rickman, and an open question of Heinonen--Semmes.
\end{abstract}

\maketitle
\tableofcontents{}

\section{Introduction}\label{sec:introdcution}
A continuous mapping $f\colon X\to Y$ between topological spaces is said to be a \textit{branched covering} if $f$ is \textit{discrete} and \textit{open}, \ie $f$ is an open mapping and for each $y\in Y$ the preimage $f^{-1}(y)$ is a discrete subset of $X$. The \textit{branch set} $\mathcal{B}_f$ of $f$ is the closed set of points in $X$ where $f$ does not define a local homeomorphism. In the case that $X$ and $Y$ are \textit{generalized $n$-manifolds}, $\mathcal{B}_f$ can be interpreted alternatively as the set of points at which the \textit{local index} $i(x,f)=1$.

By a result of due to Chernavski\u{\i} and V\"ais\"al\"a~\cite{v66}, \textit{the branch set $\mathcal{B}_f$ of a branched cover $f\colon X\to Y$ between $n$-manifolds has topological dimension at most $n-2$.} In dimension $n\geq 5$, there are branched coverings between $n$-manifolds with branch set of topological dimension $n-4$. It is not known, however, whether the topological dimension of the branch set of a branched cover between two 3-manifolds is 1. 

It should be noticed that even for a branched covering $f\colon \bR^n\to \bR^n$, both $\dim_{\mathcal{H}}(\mathcal{B}_f)$ and $\dim_{\mathcal{H}}f\big(\mathcal{B}_f\big)$ can be equal to $n$. Thus towards Hausdorff dimensional estimates of the branch set $\mathcal{B}_f$, further analytic assumptions have to be imposed on the branched coverings $f\colon X\to Y$. The common classes of mappings that arose great interests in the past two decades are the so-called \textit{quasiregular mappings}, or \textit{mappings of bounded distortion}; see~\cite{bi83,mrv69,mrv70,mrv71,re89} for the general theory of quasiregular mappings.

For a branched covering $f\colon X\to Y$ between two metric spaces, $x\in X$ and $r>0$, set
\begin{equation*}
H_f(x,r)=\frac{L_f(x,r)}{l_f(x,r)},
\end{equation*}
where
\begin{equation*}
L_f(x,r):=\sup{\{d(f(x),f(y)):d(x,y)= r\}},
\end{equation*}
and
\begin{equation*}
l_f(x,r):=\inf{\{d(f(x),f(y)):d(x,y)= r\}}.
\end{equation*}
Then the \textit{linear dilatation function} of $f$ at $x$ is defined pointwise by 
\begin{equation*}
H_f(x)=\limsup_{r\to0}H_f(x,r).
\end{equation*}
A mapping $f\colon X\to Y$ between two metric measure spaces is termed \textit{(metrically) $H$-quasiregular} if the linear dilatation function $H_f$ is finite everywhere and essentially bounded from above by $H$. We call $f$ a quasiregular mapping if it is $H$-quasiregular for some $H\in [1,\infty)$.

The branch set of a quasiregular mapping can be very wild, for instance, it might contain many wild Cantor sets, such as the Antoine's necklace~\cite{hr98}, of classical geometric topology. In his 2002 ICM address~\cite[Section 3]{h02}, Heinonen asked the following question: \textit{Can we describe the geometry and topology of allowable branch sets of quasiregular mappings between metric $n$-manifolds?}

Let us point out that the study of the geometry and topology of the branch set of a quasiregular mapping will lead to numerous important consequences. For instance, a deeper understanding of the geometry of branch set of a quasiregular mapping 
\begin{itemize}
\item helps in establishing the general theory of quasiregular mappings in non-smooth metric spaces. The principle is the following: it is usually much easier to establish the theory of \textit{quasiconformal mappings}, \ie injective quasiregular mappings, in general metric setting; the difference between quasiconformal mappings and quasiregular mappings lies in the branch set; negliable branch points and their image do not affect most of the local properties.

\item helps to establish quantitative Hausdorff dimensional estimates for the branch set $\mathcal{B}_f$ and its image $f\big(\mathcal{B}_f\big)$ of a quasiregular mapping; see for instance~\cite{s75,bh04,or09} and Corollary~\ref{coro:Corollary 1.3} below;

\item helps to establish the important V\"ais\"al\"a's inequality in general metric spaces, which is crucial for generalizing the value distributional type results (such as Picard type theorems and defect relation) beyond Euclidean spaces see for instance~\cite{r93,or09,w15} and Sections~\ref{subsec:V\"ais\"al\"a's inequality} below; 

\end{itemize}

In this paper, we explore the (geometric) porosity of $\mathcal{B}_f\cap A$ and $f\big(\mathcal{B}_f\cap A\big)$ when the linear dilatation of $f$ is finite on $A$. Our main result states that if $X$ satisfies a quantitative local connectivity assumption, the aforementioned sets are quantitatively porous. As mentioned earlier, this leads to quantitative Hausdorff dimensional estimates of these sets.

Regarding the Hausdorff dimension of $\mathcal{B}_f$ and its image $f\big(\mathcal{B}_f\big)$ in the Euclidean setting, a well-known result of Gehring and V\"ais\"al\"a~\cite{gv72} says that for each $n\geq 3$ and each pair of numbers $\alpha,\beta\in [n-2,n)$, there exists a quasiregular mapping $f:\bR^n\to \bR^n$ such that 
\begin{align*}
\dim_{\mathcal{H}}\mathcal{B}_f=\alpha\quad \text{and}\quad \dim_{\mathcal{H}}f(\mathcal{B}_f)=\beta.
\end{align*}
On the other hand, by the result of Sarvas~\cite{s75}, for a non-constant $H$-quasiregular mapping $f\colon \Omega\to \bR^n$, $n\geq 2$, between Euclidean domains, 
\begin{align}\label{eq:Sarvas}
\dim_{\mathcal{H}}f(\mathcal{B}_f)\leq n-\eta
\end{align}
for some constant $\eta=\eta(n,H)>0$. 

It has been an open problem for a long time whether the analogous dimensional estimate holds also for the branch set $\mathcal{B}_f$. The answer turns out to be yes, as a well-known result of Bonk and Heinonen~\cite{bh04} says that for a non-constant $H$-quasiregular mapping $f\colon \Omega\to \bR^n$, $n\geq 2$, between Euclidean domains, 
\begin{align}\label{eq:Bonk-Heinonen}
\dim_{\mathcal{H}}\mathcal{B}_f\leq n-\eta
\end{align}
for some constant $\eta=\eta(n,H)>0$. Let us point out that the result of Bonk and Heinonen~\cite{bh04} relies on an earlier theorem of Sarvas~\cite{s75}, which implies the existence of a quantitative upper bound on $\dim_{\mathcal{H}}(\{x\in \bR^n: 2\leq i(x, f )\leq m\})$ below $n$, depending only on $K$, $n$, and $m$. The Bonk--Heinonen theorem then follows upon proving that $\{x\in \bR^n:i(x,f)>m\}$ is porous, for some $m$ depending only on $n$ and $K$.

Since the Sarvas theorem used a normal family argument, the dimension bound obtained
in~\eqref{eq:Bonk-Heinonen} was not directly computable; Onninen and Rajala~\cite{or09} later proved that the sets $\{x\in \bR^n: i(x,f)\leq m\}$ are $\delta_m$-porous, as are their images under $f$, with a directly computable porosity constant $\delta_m$, which combined with the Bonk--Heinonen porosity result for points of
large index, gives a computable dimension bound on $\dim_{\mathcal{H}}\mathcal{B}_f$.

On the other hand, very little is known about the Hausdorff dimension of $\mathcal{B}_f$ and its image $f\big(\mathcal{B}_f\big)$ for quasiregular mapping $f\colon X\to Y$ beyond the Euclidean spaces. Indeed, in the non-smooth setting, all the known proofs of the fact that both the branch set and its image are null sets with respect to an Ahlfors regular measure relies on certain (Lipschitz) differentiable structure akin to the Euclidean spaces. For instance, Heinonen and Rickman~\cite{hr02} have established a general theory of \textit{mappings of bounded length distortion} (BLD mappings for short), which form a proper subclass of quasiregular mappings, between the so-called \textit{generalized manifolds of type A}, on which both the branch set and its image are null sets with respect to the Ahlfors reulgar measures, and put it as an open problem~\cite[Remark 6.7 (b)]{hr02} whether it is possible to obtain a quantitative estimate as~\eqref{eq:Bonk-Heinonen}. Note that a quantiative estimate as~\eqref{eq:Bonk-Heinonen} can be applied to improve on the main result of~\cite{hs02}, regarding the size of the exceptional set of the bi-Lipschitz parametrization.
For quasiregular mappings from the Euclidean domain to generalized manifolds of type $A$, Onninen and Rajala~\cite{or09} were able to obtain a slightly weaker estimate of the form~\eqref{eq:Sarvas}, while an estimate of the form~\eqref{eq:Bonk-Heinonen} seems to be un-reachable.

In the remainder of this introduction, we take as standing assumptions that $X$ and $Y$ are locally compact and locally doubling metric spaces, which are also generalized $n$-manifolds, that $X$ is locally linearly locally $n$-connected, and that $Y$ has bounded turning (precise definitions are given in Sections~\ref{sec:preliminaries} and \ref{sec:Quantitative ENR theory for LLCn spaces}). We also assume throughout this paper that $f\colon X\to Y$ is continuous, discrete and open. 


For each $R>0$ and $H\geq 1$, set 
\begin{align*}
S_{H,R}=\{x\in X:H_f(x,r)\leq H\quad \text{for all } r<R \}
\end{align*}
and
\begin{align*}
S_{H,R}^*=\{x\in X:H_f^*(x,r)\leq H\quad \text{for all } r<R \},
\end{align*}
where $H_f^*$ is the inverse dilatation function as defined in Section~\ref{subsec:Inverse dilatation}.
Then we denote 
\begin{align*}
S_H=\cup_{R>0}S_{H,R},\quad S_H^*=\cup_{R>0}S_{H,R}^*,
\end{align*}
and
\begin{align*}
S_f=\cup_{H<\infty}S_H,\quad S_f^*=\cup_{H<\infty}S_H^*.
\end{align*}
Our main result says that under these assumptions, most points where the dilatation or inverse dilatation is finite are not branch points. 

\begin{theorem}\label{thm:Theorem 1.1}
For each $R>0$ and $H\geq 1$, the set $S_{H,R}\cap \mathcal{B}_f$, $f\big(S_{H,R}\cap \mathcal{B}_f\big)$, $S_{H,R}^*\cap \mathcal{B}_f$ and $f\big(S_{H,R}^*\cap \mathcal{B}_f\big)$ are $\delta$-porous, where $\delta$ depends only on $H$ and the data of $X$ and $Y$. In particular, if $H_f(x)\leq H$ or $H_f^*(x)\leq H$ for every $x\in X$, then $\mathcal{B}_f$ and $f(\mathcal{B}_f)$ are countably $\delta$-porous, quantitatively. Moreover, the porosity constant can be explicitly calculated.
\end{theorem}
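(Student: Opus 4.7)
The plan is to establish the porosity of $S_{H,R}\cap\mathcal{B}_f$ directly at each scale, by exhibiting inside every $B(x,r)$ with $r<R$ a ball of radius $\delta r$ that is entirely disjoint from $\mathcal{B}_f$ (which is stronger than what is required). Fix $x\in S_{H,R}\cap\mathcal{B}_f$ and $r<R$. The hypothesis $x\in S_{H,R}$ gives the linear dilatation control $L_f(x,\rho)\leq H\,l_f(x,\rho)$ for all $\rho<R$, together with the two-sided inclusion $B(f(x),l_f(x,\rho))\subset f(B(x,\rho))\subset B(f(x),L_f(x,\rho))$ coming from openness and continuity of $f$. The local linear local $n$-connectivity of $X$ and the bounded turning of $Y$ upgrade these purely metric facts to quantitative topological statements: concentric balls of comparable radii can be joined and separated by connected sets with controlled diameters, and curves can be lifted under $f$ with controlled geometry at every scale $\rho<R$.

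I would then exploit the branching at $x$. Since $f$ is discrete and open between generalized $n$-manifolds, the Chernavski\u{\i}--V\"ais\"al\"a theory, reworked quantitatively in the LLC$_n$ setting in the earlier sections of the paper, produces a \emph{normal neighborhood} $U$ of $x$ contained in $B(x,r/2)$ with $f(U)=B(f(x),s)$ for some $s$ comparable to $l_f(x,r)$, on which $f$ is an $i(x,f)$-to-$1$ branched cover with $x$ as its only branch point. Since $x\in\mathcal{B}_f$, we have $i(x,f)\geq 2$, so a generic $y_0\in B(f(x),s)$ admits at least two distinct preimages $z_1,z_2\in U$. Combining the dilatation bound with linear local $n$-connectivity, these two preimages must be separated by at least $c\,r$ for some $c=c(H,\text{data})>0$: otherwise a short path joining $z_1$ to $z_2$ in $X$ would map, under $f$, to a loop in $Y$ of quantitatively small diameter, yet any lift of this loop starting at $z_1$ would fail to close up, contradicting the normal-neighborhood structure.

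With the separation in hand, I pick $\sigma=\sigma(H,\text{data})\cdot r$ so small that $B(z_1,\sigma)\subset U$ and avoids every other preimage of $y_0$; by the normal-neighborhood structure $f$ restricts to a homeomorphism on $B(z_1,\sigma)$, hence $B(z_1,\sigma)\cap\mathcal{B}_f=\emptyset$. Setting $y=z_1$ and $\delta=\sigma/r$ yields $\delta$-porosity of $S_{H,R}\cap\mathcal{B}_f$. For the image, the ball $f(B(z_1,\sigma))$ contains a ball of radius at least $l_f(z_1,\sigma)$, again controlled by the dilatation and bounded turning, and lies inside $f(B(x,r))\setminus f(S_{H,R}\cap\mathcal{B}_f)$, giving porosity of the image at the point $f(x)$. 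The two statements for $S_{H,R}^*\cap\mathcal{B}_f$ and its image are obtained by the mirror version of the same argument, with the roles of $X$ and $Y$ swapped; the inverse dilatation $H_f^*$ plays the role of controlling sizes from the target side, and the normal-neighborhood machinery is applied after transferring the scale through $f$.

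The main obstacle is the quantitative production of the normal neighborhood $U$ and the injectivity radius $\sigma$ without any smooth structure on $X$ or $Y$. In the Euclidean and BLD settings of Bonk--Heinonen, Sarvas, and Onninen--Rajala this step is handled by modulus inequalities, Lipschitz differentiability, or capacity estimates, none of which are available here. Instead, one must rely entirely on the quantitative ENR theory for LLC$_n$ generalized $n$-manifolds developed earlier in the paper, together with the bounded turning of $Y$, to produce both the normal neighborhood of controlled size and the quantitative separation of the sheets. Once these quantitative ingredients are in place, the porosity constant $\delta$ emerges as an explicit product of the LLC$_n$ constant, the bounded turning constant, the local doubling constant, and a power of $H$, which explains the claim of computability in the statement.
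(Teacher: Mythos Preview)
Your approach has a genuine gap, and it differs fundamentally from the paper's argument. You attempt to find a ball $B(z_1,\sigma)\subset B(x,r)$ that is disjoint from $\mathcal{B}_f$ itself, but several of your intermediate claims are unjustified. First, a normal neighborhood $U$ of $x$ does \emph{not} have $x$ as its only branch point; the branch set of a discrete open map between generalized $n$-manifolds can have topological dimension $n-2$, so $\mathcal{B}_f\cap U$ is typically much larger than $\{x\}$. Second, your separation argument for $z_1,z_2$ requires controlling the diameter of $f(\gamma)$ for a short path $\gamma$ near $z_1$, but the only dilatation control you have is $H_f(x,\rho)\leq H$ at the single point $x$; you have no bound on $H_f$ at $z_1$ or along $\gamma$, so nothing prevents $f(\gamma)$ from being large. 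Third, and most seriously, even granting that the preimages of \emph{one} point $y_0$ are $cr$-separated, this does not yield injectivity of $f$ on $B(z_1,\sigma)$: injectivity requires that the fibers of \emph{all} nearby image points stay separated, and you have no mechanism to control how the sheets behave away from $y_0$.

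The paper's argument runs in the opposite direction and avoids all of these issues. It never tries to find a ball free of branch points; instead it finds a ball free of points of $S_{H,R}$. The logic is contrapositive: if $S_{H,R}$ were $\delta$-dense at $x_0$ at all small scales, then one could cover $\partial B(x_0,r)$ (or $\partial U(x_0,r)$) by small normal neighborhoods $U_\alpha$ centered at points of $S_{H,R}$, each with $\diam U_\alpha\ll d(x_0,U_\alpha)$ (this is where the annular distortion estimates of Section~5 enter, and where density of $S$ near \emph{many} points, not just $x_0$, is essential). The generalized McAuley--Robinson theorem (Corollary~\ref{coro:Newmann}, via Corollary~\ref{prop:test prop}) then forces $i(x_0,f)=1$, contradicting $x_0\in\mathcal{B}_f$. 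Hence $S_{H,R}$ fails to be $\delta$-dense at $x_0$, which is exactly porosity of $S_{H,R}\cap\mathcal{B}_f$. The key point you are missing is that the density of $S_{H,R}$ provides dilatation control at a \emph{dense} set of auxiliary points, and this is what permits uniform fiber-diameter bounds along the whole boundary sphere.
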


In the special case that $X$ and $Y$ are Euclidean spaces, Theorem~\ref{thm:Theorem 1.1} gives a nice decomposition of the the branch set of a branched covering into countable union of sets restricted to which the branch set is quantitatively porous. Thus, it can be regarded as a strengthened version of the earlier quantitative porosity results of Bonk--Heinonen~\cite{bh04} and Onninen--Rajala~\cite{or09} for the branch set of a quasiregular mapping. Moreover, the quantitative porosity bounds on $f(\mathcal{B}_f)$ seems to be new even for a quasiregular mapping $f\colon \bR^n\to \bR^n$ and it can be regarded as a strengthened version of the dimensional estimate of Sarvas~\cite{s75}.  

\begin{corollary}\label{coro:Corollary 1.2}
For all locally doubling measures $\mu$ on $X$ and $\nu$ on $Y$, 
\begin{align*}
\mu\big(S_f\cap\mathcal{B}_f\big)=\nu\big(f(S_f\cap\mathcal{B}_f)\big)=
\mu\big(S_f^*\cap\mathcal{B}_f\big)=\nu\big(f(S_f^*\cap\mathcal{B}_f)\big)=0.
\end{align*}
In particular, if either $H_f(x)<\infty$ or $H_f^*(x)<\infty$ for all $x\in X$, then $$\mu(\mathcal{B}_f)=\nu(f(\mathcal{B}_f))=0.$$
\end{corollary}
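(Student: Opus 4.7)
The plan is to deduce the corollary from Theorem~\ref{thm:Theorem 1.1} by a routine countable-union argument combined with the classical measure-theoretic fact that porous subsets of a (locally) doubling metric space are null with respect to any (locally) doubling measure.

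First I would rewrite the sets $S_f \cap \mathcal{B}_f$ and $S_f^* \cap \mathcal{B}_f$ as countable unions of the quantitatively porous pieces supplied by Theorem~\ref{thm:Theorem 1.1}. Observe that $S_{H,R} \subseteq S_{H',R'}$ whenever $H \leq H'$ and $R \geq R'$, so
$$S_f \cap \mathcal{B}_f \;=\; \bigcup_{k,n \in \mathbb{N}} \bigl(S_{k,1/n} \cap \mathcal{B}_f\bigr), \qquad S_f^* \cap \mathcal{B}_f \;=\; \bigcup_{k,n \in \mathbb{N}} \bigl(S_{k,1/n}^* \cap \mathcal{B}_f\bigr),$$
and each member on the right-hand side is $\delta(k)$-porous in $X$ by Theorem~\ref{thm:Theorem 1.1}. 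Applying $f$ gives the analogous countable-union decomposition of $f(S_f \cap \mathcal{B}_f)$ and $f(S_f^* \cap \mathcal{B}_f)$ in $Y$, whose summands are likewise $\delta(k)$-porous in $Y$ by the same theorem.

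The second ingredient is the standard fact that in a locally doubling metric space every porous set is null for any locally doubling measure. This follows by covering the space with countably many balls on which the doubling constant is uniform, and noting that at each point $x$ of a $\delta$-porous set $A$ the definition provides, at arbitrarily small scales $r$, a ball $B(y,\delta r) \subseteq B(x,r) \setminus A$; the doubling property then gives a uniform upper density strictly less than one at every point of $A$, and a straightforward application of the Lebesgue differentiation theorem on doubling spaces forces $\mu(A)=0$ (and analogously $\nu$ in $Y$). Applying this to each summand of the decompositions above and taking countable subadditivity yields the four vanishing assertions of the corollary.

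Finally, the ``In particular'' clause is immediate: if $H_f(x) < \infty$ for every $x \in X$ then $X = S_f$, so $\mathcal{B}_f = S_f \cap \mathcal{B}_f$ and $f(\mathcal{B}_f) = f(S_f \cap \mathcal{B}_f)$; the case $H_f^*(x) < \infty$ everywhere uses $S_f^*$ in place of $S_f$. There is essentially no obstacle beyond Theorem~\ref{thm:Theorem 1.1} itself: the union is clearly countable, and the porous-implies-null principle is well documented for locally doubling measures in locally doubling metric spaces.
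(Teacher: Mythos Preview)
Your proposal is correct and matches the paper's approach: the paper does not give a separate proof of Corollary~\ref{coro:Corollary 1.2}, treating it as an immediate consequence of Theorem~\ref{thm:Theorem 1.1} via precisely the countable decomposition $S_f=\bigcup_{k,n}S_{k,1/n}$ together with the standard fact that porous sets are null for locally doubling measures. Your write-up simply spells out these routine details.
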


If $f\colon \bR^n\to \bR^n$ is a \textit{mapping of finite linear dilatation} (\ie $f$ is a branched covering and satisfies $H_f(x)<\infty$ for almost everywhere $x\in \bR^n$) with locally exponentially integrable linear dilatation (\ie $\exp(\lambda H_f)\in L^1_{\text{loc}}(\bR^n)$ for some positive constant $\lambda$), then it follows from the earlier works of Kallunki~\cite[Theorem 4.5]{ka02} and Koskela--Mal\'y~\cite[Theorem 1.1]{km03} that $\mathcal{B}_f$ is a null set with respect to the $n$-dimensional Lebesgue measure. Somewhat surprisingly, Corollary~\ref{coro:Corollary 1.2} implies that the assumption that $f$ has a locally exponentially integrable linear dilatation is superfluous.

Particularly important to the general theory of quasiconformal and quasisymmetric mappings are Ahlfors $Q$-regular spaces. It is well know that porous subsets of such spaces have Hausdorff dimension strictly smaller than $Q$, quantitatively; see e.g.~\cite[Lemma 3.12]{bhr01} or~\cite[Lemma 9.2]{or09}. Thus we have the following consequence.

\begin{coro}\label{coro:Corollary 1.3}
If $X$ and $Y$ are Ahlfors $Q$-regular, and $H_f(x)<\infty$ or $H_f^*(x)<\infty$ for all $x\in X$, then $\mathcal{H}^Q(\mathcal{B}_f)=\mathcal{H}^Q(f(\mathcal{B}_f))=0$. Moreover, if either $H_f(x)\leq H$ or $H_f^*(x)\leq H$ for all $x\in X$, then 
\begin{align*}
\max\big\{\dim_{\mathcal{H}}(\mathcal{B}_f),\dim_{\mathcal{H}}(f(\mathcal{B}_f))\big\}\leq Q-\eta<Q,
\end{align*}
where $\eta$ depends only on $H$ and the data of $X$ and $Y$. Moreover, $\eta$ can be explicitly calculated.
\end{coro}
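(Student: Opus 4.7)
The plan is to deduce both assertions directly from the main results already established in this paper, combined with the standard fact that porosity forces a quantitative drop in Hausdorff dimension on Ahlfors regular spaces.

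For the qualitative vanishing assertion, observe that an Ahlfors $Q$-regular measure is globally doubling, and the corresponding Hausdorff measure $\mathcal{H}^Q$ on an Ahlfors $Q$-regular space is comparable to that measure, so $\mathcal{H}^Q$ is itself doubling (in particular locally doubling) on both $X$ and $Y$. Under the hypothesis $H_f(x)<\infty$ for every $x\in X$ we have $X=S_f$, and analogously $X=S_f^*$ under the inverse hypothesis. Applying Corollary~\ref{coro:Corollary 1.2} with $\mu=\mathcal{H}^Q$ on $X$ and $\nu=\mathcal{H}^Q$ on $Y$ then yields $\mathcal{H}^Q(\mathcal{B}_f)=\mathcal{H}^Q(f(\mathcal{B}_f))=0$.

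For the quantitative estimate, assume instead that either $H_f\leq H$ or $H_f^*\leq H$ everywhere on $X$. Theorem~\ref{thm:Theorem 1.1} then shows that $\mathcal{B}_f$ and $f(\mathcal{B}_f)$ are countably $\delta$-porous with an explicit constant $\delta$ depending only on $H$ and the data of $X$ and $Y$. The standard fact invoked just before the statement (cf.~\cite[Lemma 3.12]{bhr01} or~\cite[Lemma 9.2]{or09}) then produces some $\eta_0=\eta_0(Q,\delta)>0$, computable from $Q$ and $\delta$, such that every $\delta$-porous subset of an Ahlfors $Q$-regular space has Hausdorff dimension at most $Q-\eta_0$. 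Since Hausdorff dimension is stable under countable unions, this bound persists for the countably $\delta$-porous sets $\mathcal{B}_f$ and $f(\mathcal{B}_f)$, giving an $\eta=\eta(Q,H,\text{data})$ that is explicitly calculable from the porosity constant in Theorem~\ref{thm:Theorem 1.1}. There is no essential obstacle at this stage: all of the geometric and analytic content has already been absorbed into the proofs of Theorem~\ref{thm:Theorem 1.1} and Corollary~\ref{coro:Corollary 1.2}, while the present step is a purely metric--measure-theoretic conversion of porosity into a quantitative dimension gap.
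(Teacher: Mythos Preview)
Your proposal is correct and matches the paper's own reasoning: the paper presents Corollary~\ref{coro:Corollary 1.3} as an immediate consequence of Theorem~\ref{thm:Theorem 1.1} and Corollary~\ref{coro:Corollary 1.2} together with the cited fact that $\delta$-porous subsets of Ahlfors $Q$-regular spaces have Hausdorff dimension at most $Q-\eta(Q,\delta)$. No separate proof is given in the paper beyond this remark, and your write-up fills in exactly those details.
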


Our methods are closest in spirit to those of Onninen and Rajala~\cite{or09}. The principle differences are three-fold: firstly, we circumvent the need for analytic arguments based on modulus inequalities, instead
arguing directly from the infinitesimal metric definition. Thus we avoid the need for analytic
assumptions on the domain, e.g., the Poincar\'e inequality. This is not completely surprising
-in the special setting that the Hausdorff and topological dimensions of $X$ coincide, \ie
$Q=n$, a deep result of Semmes~\cite[Theorem~B.10]{s96} states that \textit{linear local contractibility} 
implies an abstract Poincar\'e inequality in the sense of Heinonen--Koskela~\cite{hk98}. On the other hand, this clearly fails for $Q>n$ since we may snowflake the space so that there are no rectifiable curves. As far as we know, this is the first case where the estimates on the branch set and its image are obtained when $Q\neq n$. Moreover, it is quite surprising that properties other than differentiability of quasiregular mappings can be deduced directly from the metric definition, which is often difficult to use because of the infinitesimal feature. 

The second substantial difference from~\cite{or09} is that their methods depend on a theorem of McAuley--Robinson~\cite{mr83} giving a lower bound on the diameter of certain point inverses for nonhomeomorphic discrete open mappings with Euclidean domains. The argument in~\cite{mr83}
depends crucially on the affine structure of the Euclidean spaces; to generalize it to our setting, we require
methods from quantitative topology as developed by Grove, Petersen, Wu and Semmes~\cite{gp88,gpw90,p90,s96}.

The third major difference from~\cite{or09} is that, instead of splitting the branch points into two parts -one with large local index and the other with bounded local index- we argue directly on the branch set $\mathcal{B}_f\cap S_{H,R}$ and $f(\mathcal{B}_f\cap S_{H,R})$, and so our estimates on these sets are automatically index-free. In particular, when the underlying metric spaces are Loewner, $H_f^*$ will be quantitatively bounded and thus the branch set of a quasiregular mapping can be decomposed into a countable union of porous sets with quantitative porosity constant. It is worth pointing out that our method allows us to obtain quantitative countable porosity bounds for both the branch set $\mathcal{B}_f$ and its image $f(\mathcal{B}_f)$, simultaneously. 

\subsection{Analytic consequences}\label{subsec:Analytic consequences} 
When $X$ and $Y$ are Ahlfors $Q$-regular, finiteness and essential
boundedness of either $H_f(x)$ or $H_f^*(x)$ (or, for that matter, of even one of the ``$\liminf$"-
dilatations $h_f(x)$ or $h_f^*(x)$ in the spirit of~\cite{hk95,bkr07}) implies that on each open set $U\subset X$, the $K_O$- and $K_I$-inequalities
\begin{align*}
\frac{1}{K_ON_f(U)}\Modd_Q(\Gamma)\leq \Modd_Q(f(\Gamma))\leq K_I\Modd_Q(\Gamma)
\end{align*}
hold for every family $\Gamma$ of curves in $X$, where $N_f(U)=\sup_{y\in Y}\card\big(f^{-1}(y)\cap U\big)$. This was proved in the homeomorphic case in~\cite[Theorem 1.6]{w14} and later extended to the branched setting in~\cite{w15} (see~\cite{gw16}). Neither of these
inequalities require any assumptions on local homology or contractibility for $X$ and $Y$. It is
also shown in~\cite{w15} (see~\cite{gw16}) that whenever $\mathcal{H}^Q\big(f(\mathcal{B}_f)\big)=0$, the $K_I$-inequality is equivalent to the typically stronger V\"ais\"al\"a's inequality, given precisely in Theorem~\ref{thm:Vaisala inequality}. Thus the first part of Corollary~\ref{coro:Corollary 1.3} gives V\"ais\"al\"a's inequality in our setting, provided $H_f$ or $H_f^*$ is finite and essentially bounded (see Theorem~\ref{thm:Vaisala inequality} below). At this point, we have still not imposed any Poincar\'e inequality on $X$ or $Y$.

\subsection{Loewner spaces}\label{subsec:Loewner spaces}
There is a subtlety to the observation that Corollary~\ref{coro:Corollary 1.3} generalizes the
Bonk--Heinonen theorem, which gave an index-free upper bound on $\dim_{\mathcal{H}}\mathcal{B}_f$. In general,
the linear dilatation $H_f(x)$ of a quasiregular map in $\bR^n$ need not be globally bounded - it
is instead finite and essentially bounded, and at any point $x\in \bR^n$, the dilatation depends
quantitatively on not merely the essential supremum of $H_f$ , but also on the index $i(x,f)$.
That Corollary~\ref{coro:Corollary 1.3} is an actual generalization requires the fact that $H_f^*(x)$ is bounded everywhere by a constant $H^*$ independent of $i(x,f)$. This latter fact was proved in the Euclidean
case in~\cite{mrv71}, using the $K_O$- and V\"ais\"al\"a's inequalities, as well as the Loewner property of $\bR^n$. Thus we do not know, in the $Q$-regular case, whether finiteness and essential boundedness of $H_f$ is sufficient to obtain an upper bound for $\dim_{\mathcal{H}}\mathcal{B}_f$ (nor, for that matter, for $\dim_{\mathcal{H}}f\big(\mathcal{B}_f\big)$).

In the case that $X$ and $Y$ are Loewner, however, V\"ais\"al\"a's inequality allows us to generalize the corresponding result of~\cite{mrv71}, giving an index free upper bound on $H_f^*$.

\begin{theorem}\label{thm:Loewner case bound on inverse dilatation}
Suppose (under the standing assumptions) that $X$ and $Y$ are locally Ahlfors
$Q$-regular and $Q$-Loewner, $H_f(x)<\infty$ for all $x\in  X$, and $H_f(x)\leq H$ for $\mathcal{H}^Q$-almost every $x\in X$. Then $H_f^*(x)\leq H^*$ for every $x\in  X$, where $H^*$ depends only on $H$ and the data of
$X$ and $Y$, and the sets $\mathcal{B}_f$ and $f\big(\mathcal{B}_f\big)$ are countably $\delta$-porous, for some $\delta$ depending only on $H$ and the data.
\end{theorem}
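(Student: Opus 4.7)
The plan is to adapt the classical Martio--Rickman--V\"ais\"al\"a argument~\cite{mrv71} to the present metric setting, replacing Euclidean tools with the $Q$-Loewner property. First, I would upgrade the dilatation bound to V\"ais\"al\"a's inequality: since $H_f$ is essentially bounded by $H$ and everywhere finite, Corollary~\ref{coro:Corollary 1.3} yields $\mathcal{H}^Q(\mathcal{B}_f) = \mathcal{H}^Q(f(\mathcal{B}_f)) = 0$. Combined with the $K_I$-inequality recorded in Subsection~\ref{subsec:Analytic consequences}, this produces V\"ais\"al\"a's inequality for $f$ (Theorem~\ref{thm:Vaisala inequality}) with constant $K_I$ depending only on $H$ and the data of $X$ and $Y$.

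Next, I would prove a uniform bound $H_f^*(x) \leq H^*$ by contradiction using modulus estimates. Fix $x \in X$; because $H_f(x) < \infty$, for small $r > 0$ the normal neighborhood $U_r = U(x, f, r)$ is well defined with $f(U_r) = B(f(x), r)$. Write $\ell_r = l_f^*(x, r)$ and $L_r = L_f^*(x, r)$, and suppose, for contradiction, that $L_r / \ell_r > M$ for arbitrarily small $r$ and arbitrarily large $M$. Using the $Q$-Loewner property and the bounded turning of $Y$, I would produce, inside a fixed multiple of $B(f(x), r)$, a curve family $\Gamma_Y$ joining two continua of controlled relative distance whose modulus satisfies $\Modd_Q(\Gamma_Y) \geq c_0 > 0$, with $c_0$ depending only on the data. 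Pulling $\Gamma_Y$ back through a slightly larger normal neighborhood containing $U_r$, I would obtain a family $\Gamma_X$ joining a continuum contained in $\overline{B(x, c_1 \ell_r)}$ to a continuum meeting $\{z : d(x,z) \geq L_r\}$. V\"ais\"al\"a's inequality then forces
\begin{equation*}
\Modd_Q(\Gamma_X) \geq c_0/K_I > 0,
\end{equation*}
while the standard annular modulus upper bound in locally Ahlfors $Q$-regular spaces gives
\begin{equation*}
\Modd_Q(\Gamma_X) \leq C (\log (L_r / \ell_r))^{1-Q} \leq C (\log M)^{1-Q}.
\end{equation*}
These two bounds contradict each other once $M$ is sufficiently large in terms of $H$ and the data, yielding the desired uniform bound $H_f^*(x) \leq H^*$.

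With $H_f^*$ now bounded everywhere by $H^*$, a direct application of Theorem~\ref{thm:Theorem 1.1} furnishes the asserted countable $\delta$-porosity of both $\mathcal{B}_f$ and $f(\mathcal{B}_f)$ with computable $\delta$. The hard part will be the pull-back construction in the second stage: realizing the image continua of the Loewner estimate as the $f$-images of genuine continua inside a normal neighborhood at the correct metric scales $\ell_r$ and $L_r$. This step combines the normal-neighborhood theory (whose local validity rests on the pointwise finiteness of $H_f$) with the quantitative-topological machinery for generalized $n$-manifolds, namely the local linear $n$-connectivity of $X$ and the bounded turning of $Y$. A secondary delicate point is the legitimate application of V\"ais\"al\"a's inequality to a family that may intersect the branch set, which is possible precisely because $\mathcal{H}^Q(f(\mathcal{B}_f)) = 0$ was obtained in the first stage.
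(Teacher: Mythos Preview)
Your first and third stages are correct and match the paper: one obtains V\"ais\"al\"a's inequality from Corollary~\ref{coro:Corollary 1.3} and Theorem~\ref{thm:Polesky to Vaisala}, and once $H_f^*\leq H^*$ everywhere, Theorem~\ref{thm:Theorem 1.1} gives the porosity.

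The middle stage, however, has a genuine gap. Your scheme is: apply Loewner in $Y$ to a family $\Gamma_Y$ in a multiple of $B(y_0,r)$, pull back via V\"ais\"al\"a to a family $\Gamma_X$ that crosses the annulus $B(x,L_r)\setminus B(x,c_1\ell_r)$, and compare with the annular upper bound. The step that fails is the claim that the lifts cross that annulus. If $\Gamma_Y$ joins a small continuum $E\subset B(y_0,s)$ to something outside $B(y_0,r)$, then the lifts start in $U(x,s)$ and end on $\partial U(x,r)$; but you have no control on $\diam U(x,s)$ in terms of $\ell_r$ without already knowing $H_f^*$ is bounded (which is what you are proving), and $\partial U(x,r)$ is not contained in $X\setminus B(x,L_r)$---it stretches all the way from $S(x,\ell_r)$ to $S(x,L_r)$. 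So neither endpoint of a lifted curve is forced to lie where you need it, and the annular upper bound $C(\log M)^{1-Q}$ is unavailable. This is exactly the ``hard part'' you flag, and normal-neighborhood theory alone does not rescue it.

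The paper's argument (following~\cite{mrv71}) avoids this circularity by working with three nested normal neighborhoods $U_l\subset U_r\subset U_L$, where $l=l_f(x,L^*)$, $L=L_f(x,L^*)$, $L^*=L_f^*(x,r)$, $l^*=l_f^*(x,r)$. The boundaries $\partial U_l$, $\partial U_r$, $\partial U_L$ by construction meet the spheres $S(x,l^*)$ and $S(x,L^*)$, so Loewner in $X$ (not $Y$) gives a uniform lower bound on $\Modd_Q(\Gamma_1)$ and $\Modd_Q(\Gamma_2)$. One then uses \emph{both} the $K_O$-inequality (producing upper bounds with a factor $i(x,f)$) and V\"ais\"al\"a's inequality in the form of Corollary~\ref{coro:normal vaisala inequality} (producing a lower bound with the same factor $i(x,f)$), together with the annular upper bound $\Modd_Q(\Gamma(U_l,\Omega\setminus U_L))\leq c(\log(L^*/l^*))^{1-Q}$. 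The index factors cancel, yielding an $i(x,f)$-free bound on $L^*/l^*$. Your proposal uses only one of the two modulus inequalities and applies Loewner on the wrong side; both changes are essential to closing the argument.
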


Combining Theorem~\ref{thm:Loewner case bound on inverse dilatation} with Corollary~\ref{coro:Corollary 1.3}, we obtain the following result, the first half of which is a true generalization of the Bonk-Heinonen theorem.
\begin{coro}\label{coro:Heinonen-Rickman}
Under the assumptions of Theorem~\ref{thm:Loewner case bound on inverse dilatation}, we have 
\begin{align*}
\max\big\{\dim_{\mathcal{H}}(\mathcal{B}_f),\dim_{\mathcal{H}}(f(\mathcal{B}_f))\big\}\leq Q-\eta<Q,
\end{align*}
for some constant $\eta$ depending only on $H$ and the data of $X$ and $Y$.
\end{coro}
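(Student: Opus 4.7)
The plan is to combine Theorem~\ref{thm:Loewner case bound on inverse dilatation} directly with the second half of Corollary~\ref{coro:Corollary 1.3}; the statement is essentially a bookkeeping chain of the two results. The underlying point is that the Loewner and Ahlfors $Q$-regularity assumptions of Theorem~\ref{thm:Loewner case bound on inverse dilatation} are strong enough to convert an essential bound on $H_f$ into a genuine pointwise bound on the inverse dilatation $H_f^*$, which is exactly the hypothesis needed to feed into Corollary~\ref{coro:Corollary 1.3}.

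First I would invoke Theorem~\ref{thm:Loewner case bound on inverse dilatation}. Under the standing assumptions, together with local Ahlfors $Q$-regularity, the $Q$-Loewner condition, finiteness of $H_f(x)$ for every $x \in X$, and the essential bound $H_f(x) \le H$ for $\mathcal{H}^Q$-almost every $x \in X$, that theorem yields a uniform pointwise bound $H_f^*(x) \le H^*$ for every $x \in X$, where $H^*$ depends only on $H$ and the data of $X$ and $Y$. Second, with this index-free pointwise bound in hand, the hypotheses of the quantitative half of Corollary~\ref{coro:Corollary 1.3} are satisfied with $H$ replaced by $H^*$. Applying that corollary immediately delivers
\begin{equation*}
\max\bigl\{\dim_{\mathcal{H}}(\mathcal{B}_f),\,\dim_{\mathcal{H}}(f(\mathcal{B}_f))\bigr\} \le Q-\eta < Q,
\end{equation*}
with $\eta$ depending only on $H^*$ and the data, and hence, through the dependence of $H^*$, only on $H$ and the data.

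Essentially no further work is required: the porosity input to Corollary~\ref{coro:Corollary 1.3} is supplied by Theorem~\ref{thm:Theorem 1.1}, and the porosity-to-dimension conversion in Ahlfors $Q$-regular spaces is the standard lemma cited in the paragraph preceding Corollary~\ref{coro:Corollary 1.3}. The only point requiring care, which I regard as the main (quite mild) obstacle, is the quantitative tracking of constants: one must verify that the porosity constant $\delta$ produced by Theorem~\ref{thm:Loewner case bound on inverse dilatation} (equivalently, by Theorem~\ref{thm:Theorem 1.1} applied with $H^*$) depends only on $H$ and the data, and not on any auxiliary quantity attached to $f$ such as a local index bound. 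Since both Theorem~\ref{thm:Loewner case bound on inverse dilatation} and Theorem~\ref{thm:Theorem 1.1} are stated quantitatively and index-free, this dependence is transparent, and the asserted bound on $\eta$ follows.
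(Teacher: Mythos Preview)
Your proposal is correct and matches the paper's approach exactly: the paper states this corollary as an immediate consequence of combining Theorem~\ref{thm:Loewner case bound on inverse dilatation} with Corollary~\ref{coro:Corollary 1.3}, precisely the two-step chain you describe.
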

Corollary~\ref{coro:Heinonen-Rickman} answers affirmatively the open problem of Heinonen and Rickman~\cite[Remark 6.7 (b)]{hr02} in a stronger form, namely, we obtain dimensional estimates for the class of quasiregular mappings, which is strictly large than the class of BLD mappings\footnote{In~\cite{hr02}, the problem was asked for mappings between generalized $n$-manifolds of type $A$, which do not necessarily have quantitative data as in the setting of the above corollary. However, it is very evident that one needs to imposes quantitative data in order to obtain quantitative dimensional estimates on the branch set of quasiregular mappings}. Notice also that we have obtained the dimensional estimates for both the branch set and its image.

In~\cite[Question 27]{hs97}, Heinonen and Semmes asked \textit{if for a given branched covering $f:S^n\to S^n$, $n\geq 3$, there is a metric $d$ on $S^n$ so that $(S^n ,d)$ is an Ahlfors $n$-regular and locally linearly contractible metric space, and $f\colon (S^n , d)\to  S^n$ is a BLD mapping.} By Corollary~\ref{coro:Heinonen-Rickman}, the existence of such a metric $d$ necessarily implies that $f(\mathcal{B}_f)$ must be null with respect to the $n$-dimensional Hausdorff measure $\mathcal{H}^n$. On the other hand, there are plenty of branched coverings $f\colon S^n\to S^n$ such that $\mathcal{H}^n(f(\mathcal{B}_f))>0$ and so we have the following negative answer to this question.

\begin{corollary}\label{coro:Heinonen-Semmes}
Not every branched covering $f\colon S^n\to S^n$, $n\geq 3$, can be made BLD by changing the metric in the domain but keeping the space Ahlfors $n$-regular and linearly locally	contractible.
\end{corollary}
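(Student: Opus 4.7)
The plan is proof by contradiction, with Corollary~\ref{coro:Heinonen-Rickman} as the sole real engine. Suppose toward a contradiction that every branched covering $f\colon S^n\to S^n$, $n\geq 3$, admits a metric $d$ on the source sphere such that $(S^n,d)$ is Ahlfors $n$-regular and linearly locally contractible, while $f\colon (S^n,d)\to S^n$ (target carrying the standard round metric) is BLD. The aim is to derive a contradiction with the existence of a single branched cover whose branch-image has positive $n$-dimensional Hausdorff measure in the target.

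First I would verify the hypotheses of Corollary~\ref{coro:Heinonen-Rickman}. BLD mappings form a subclass of $H$-quasiregular mappings for some $H$ depending only on the BLD constant, so $H_f\leq H$ on $(S^n,d)$. Linear local contractibility is strictly stronger than the linear local $n$-connectedness required by the standing hypotheses, and topologically $(S^n,d)$ is a (generalized) $n$-manifold; the target $S^n$ in its standard metric is Ahlfors $n$-regular, $n$-Loewner, and has bounded turning. For the source, I would invoke the Semmes theorem quoted in the introduction: when Hausdorff and topological dimensions coincide, Ahlfors $n$-regularity together with linear local contractibility implies a Poincar\'e inequality in the sense of Heinonen--Koskela, and together with $n$-regularity this yields the $n$-Loewner property of $(S^n,d)$.

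With the hypotheses in place, Corollary~\ref{coro:Heinonen-Rickman} yields
\[
\dim_{\mathcal{H}}\bigl(f(\mathcal{B}_f)\bigr)\leq n-\eta<n,
\]
where the Hausdorff dimension is taken with respect to the standard metric on the target. In particular $\mathcal{H}^n\bigl(f(\mathcal{B}_f)\bigr)=0$. The crucial, and essentially trivial, observation is that both the set $f(\mathcal{B}_f)\subset S^n$ and its standard $\mathcal{H}^n$-measure depend only on the purely topological data of the branched cover $f$ together with the fixed metric on the target; neither is affected by the choice of $d$ on the domain.

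To close the contradiction it remains to exhibit a single branched covering $f\colon S^n\to S^n$ with $\mathcal{H}^n\bigl(f(\mathcal{B}_f)\bigr)>0$, and this is the main (though essentially classical) obstacle in the argument. Such an $f$ is obtained from the Gehring--V\"ais\"al\"a construction~\cite{gv72} cited in the introduction, which produces branched coverings of $\mathbb{R}^n$ realizing $\dim_{\mathcal{H}}(f(\mathcal{B}_f))=n$ for every $n\geq 3$; a standard one-point compactification/extension argument transfers such an example to a branched cover of $S^n$ by itself whose branch-image still has positive $\mathcal{H}^n$-measure. For any such $f$ the hypothesised BLD realisation would, by the previous paragraph, force $\mathcal{H}^n\bigl(f(\mathcal{B}_f)\bigr)=0$, contradicting the construction and proving the corollary.
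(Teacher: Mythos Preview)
Your overall strategy matches the paper's: apply Corollary~\ref{coro:Heinonen-Rickman} to force $\mathcal{H}^n(f(\mathcal{B}_f))=0$ whenever such a metric $d$ exists, then contradict this with the existence of a branched cover whose branch-image has positive $\mathcal{H}^n$-measure. Your verification of the hypotheses (BLD $\Rightarrow$ bounded $H_f$; LLC $\Rightarrow$ LLC$^n$; Semmes' theorem $\Rightarrow$ Loewner) is more explicit than the paper's, which simply invokes Corollary~\ref{coro:Heinonen-Rickman} directly.

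There is, however, one genuine error in the final step. The Gehring--V\"ais\"al\"a construction~\cite{gv72} produces \emph{quasiregular} mappings realising $\dim_{\mathcal{H}} f(\mathcal{B}_f)=\beta$ for any $\beta\in[n-2,n)$; by Sarvas' theorem these cannot reach $\dim_{\mathcal{H}} f(\mathcal{B}_f)=n$, let alone positive $\mathcal{H}^n$-measure. So the citation does not deliver what you need. What you actually require is the separate (and easier) fact, stated without attribution in the introduction, that a general branched cover $S^n\to S^n$ may have $\mathcal{H}^n(f(\mathcal{B}_f))>0$. For instance, post-compose any standard branched cover (say a winding map) with a homeomorphism of $S^n$ that sends the $(n-2)$-dimensional branch image onto a set of positive $\mathcal{H}^n$-measure; such homeomorphisms certainly exist, though they are of course far from quasiconformal. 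With this correction your argument is complete and coincides with the paper's.
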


\subsection{Sharpness of the results}\label{subsec:Sharpness of the results}

Our standing assumptions for the underlying spaces $X$ and $Y$, except the local linear $n$-connectivity on $X$, are quite mild. On ther other hand, the local linear $n$-connectivity is necessary for the validity of all the previous results, as the following example from~\cite{gw16} indicates.

\begin{theorem}[Corollary 8.7,~\cite{gw16}]\label{thm:counter-example}
For each $n\geq 3$, there exist an Ahlfors $n$-regular metric space $X$ that is homeomorphic to $\bR^n$ and supports a $(1,1)$-Poincar\'e inequality, and a 1-quasiregular mapping $f\colon X\to \bR^n$, such that $\min\big\{\mathcal{H}^n(\mathcal{B}_f),\mathcal{H}^n(f(\mathcal{B}_f))\big\}>0.$  
\end{theorem}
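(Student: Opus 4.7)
The plan is to construct the space $X$ and the map $f$ by an explicit metric quotient, following the methodology of~\cite{gw16}. The starting observation is that to have $\mathcal{H}^n(\mathcal{B}_f)>0$ together with $f$ being $1$-quasiregular, $X$ cannot be locally Euclidean in the metric sense; it must admit genuine collapsing on a set of positive measure, and this is precisely what the failure of local linear $n$-connectivity permits.

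The construction proceeds in two steps. First, I would fix a closed set $E\subset \bR^n$ of positive $\mathcal{H}^n$-measure with a carefully tuned self-similar structure; a convenient choice is a product $E=K\times \bR^{n-2}$, where $K\subset \bR^2$ is a Cantor-type planar set of positive $\mathcal{H}^2$-measure whose complement is uniformly ``thickly connected'' at every scale. Second, I would glue two copies of $\bR^n$ along $\bR^n\setminus E$ while keeping two sheets over $E$, and equip the resulting space $X$ with the induced length metric. The tautological folding map $f\colon X\to \bR^n$ is then a two-sheeted branched covering whose branch set is the image of $E$ in $X$ and whose critical image is $E$ itself, so both have positive $\mathcal{H}^n$-measure by construction. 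That $X$ is homeomorphic to $\bR^n$ is arranged by choosing $K$ sufficiently porous, so that the two-sheeted identification collapses topologically to a standard $\bR^n$, an argument in the spirit of decomposition-space theory; Ahlfors $n$-regularity is a direct volume computation, since balls in $X$ meet at most two Euclidean hemiballs; and $f$ is $1$-quasiregular because the quotient metric makes $f$ a local isometry on each sheet, giving $H_f(x,r)=1$ for all $x\in X$ and all sufficiently small $r>0$.

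The main obstacle is the $(1,1)$-Poincar\'e inequality on $X$. The branching locus has positive measure, so running a chaining argument entirely in $X\setminus\mathcal{B}_f$ fails a priori. One must instead invoke a Semmes-type sufficient condition, exploiting the self-similar porosity of $K$ to produce, between any two prescribed points of $X$, a curve family of rich modulus whose length excess over the Euclidean distance is controlled by a constant independent of the scale. This modulus estimate, which hinges on the precise quantitative choice of $K$, is the technical heart of the argument and is where essentially all of the real work in~\cite{gw16} lies; the remaining items (Ahlfors regularity, topological identification, and infinitesimal isometry of $f$) are then checked directly from the construction.
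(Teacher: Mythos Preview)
The paper does not contain a proof of this theorem; it is quoted from~\cite[Corollary~8.7]{gw16}, and the text only remarks that the map constructed there is in fact $1$-BLD. So there is no in-paper argument to compare against, and the question is whether your sketch could produce such an example.

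It cannot, as written. If you glue two copies of $\bR^n$ along the open dense set $\bR^n\setminus E$ and keep two sheets over the nowhere-dense closed set $E$, the quotient is not Hausdorff: for $p\in E$ the two lifts $p_1,p_2$ cannot be separated, since every neighborhood of either contains identified points of the complement. Passing to the induced length metric does not help---$p_1$ and $p_2$ are joined by arbitrarily short paths through the glued region, so the length-metric quotient collapses back to $\bR^n$ and $f$ becomes the identity. Even ignoring this, your folding map has \emph{empty} branch set: near each $p_i$ the space is a copy of a Euclidean ball on which $f$ restricts to the identity, so $f$ is a local homeomorphism everywhere (this is exactly why ``$f$ is a local isometry on each sheet'' is incompatible with $\mathcal{B}_f\neq\emptyset$). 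Reversing the gluing---identifying along $E$ and keeping two sheets over $\bR^n\setminus E$---does yield a genuinely branched map, but then $X$ is not a manifold: near points of $E$ it is two copies of $\bR^n$ wedged along a Cantor-type set, and no porosity hypothesis on $K$ makes this locally Euclidean via decomposition theory. The route taken in~\cite{gw16} is different: one starts from a pre-existing topological branched cover $g\colon\bR^n\to\bR^n$ with $\mathcal{H}^n(\mathcal{B}_g)>0$ and $\mathcal{H}^n(g(\mathcal{B}_g))>0$ (such maps exist for $n\geq 3$) and equips the domain with the pullback path metric $d_X(x,y)=\inf_\gamma \ell(g\circ\gamma)$; this makes $f=g$ tautologically $1$-BLD while leaving the domain homeomorphic to $\bR^n$, and the substantive work---here your instincts are correct---is verifying Ahlfors $n$-regularity and the Poincar\'e inequality for this pullback metric.
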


The construction of such an example, as in Theorem~\ref{thm:counter-example}, is demonstrated in~\cite[Corollary 8.7]{gw16}, whereas the mapping $f\colon X\to \bR^n$ is shown to be even 1-BLD. 

\subsection{Removing the topological assumptions}

The assumption that $X$ and $Y$ are generalized manifolds is used in Theorem~\ref{thm:Theorem 1.1} 
only once, in order to apply our generalization of the McAuley--Robinson theorem, Corollary~\ref{coro:Newmann}. If we remove the local homology assumption, we may still apply Theorem~\ref{thm:key thm}, to obtain local (left) homotopy inverses away from a porous set. In particular, Theorem~\ref{thm:Theorem 1.1} and Corollaries~\ref{coro:Corollary 1.2} and~\ref{coro:Corollary 1.3} all remain valid if $\mathcal{B}_f$ is replaced with the left homotopy branch
set $\mathcal{B}_f^{*,l}$ (see~Section~\ref{subsec:Density and Porosity} for the definition), consisting of all the points at which $f$ fails to have a local left homotopy inverse $g$ as given in the conclusion of Theorem~\ref{thm:key thm}. Thus, when $x\notin \mathcal{B}_f^{*,l}$, the homomorphisms $f_*$ and $f^*$ on local homology and cohomology have left (resp. right) inverses.

It is also not too hard to show under the assumptions of the theorem that the sets $U_\alpha$
constructed in the proof of Theorem~\ref{thm:Theorem 1.1} also satisfy $\diam f(U_\alpha)<<d(y_0,f(U_\alpha))$, provided $\delta$ is sufficiently small.

Thus if $Y$, as well as $X$, is assumed to be LLC$^n$, then Proposition~\ref{prop:auxilary prop} may be applied to $f\circ g$ to obtain a homotopy equivalence $f\circ g\backsimeq I_{B(y_0,r)}$ through which the boundary $\partial B(y_0,r)$ remains far away from $y_0$.

Thus we could replace $\mathcal{B}_f$ in Theorem~\ref{thm:Theorem 1.1} and Corollaries~\ref{coro:Corollary 1.2} and~\ref{coro:Corollary 1.3} with a generalized homotopy branch set $\mathcal{B}_f^*$. We would in particular have that at each $x\notin \mathcal{B}_f^*$, the induced maps $f_*$ and $f^*$ on local homology and cohomology are isomorphisms.

We summary the above observations as a separate theorem.
\begin{theorem}\label{thm:generalized branch set}
	Removing the assumption that $X$ and $Y$ are generalized $n$-manifolds from the standing assumptions, Theorem~\ref{thm:Theorem 1.1} and Corollaries~\ref{coro:Corollary 1.2} and~\ref{coro:Corollary 1.3} remain valid if we replace the branch set $\mathcal{B}_f$ with the generalized left homotopy branch set $\mathcal{B}_f^{*,l}$. Moreover, if $Y$ is additionally assumed to be LLC$^n$, then all the conclusions hold if we replace the branch set $\mathcal{B}_f$ with the generalized homotopy branch set $\mathcal{B}_f^{*}$.
\end{theorem}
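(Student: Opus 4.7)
The plan is to reread the proof of Theorem~\ref{thm:Theorem 1.1} and isolate the single place where the hypothesis that $X$ and $Y$ are generalized $n$-manifolds is actually invoked. According to the discussion preceding the statement, this hypothesis enters only at one point, namely when Corollary~\ref{coro:Newmann} (our Newman/McAuley--Robinson type statement) is used to convert the existence of a local left homotopy inverse, produced by Theorem~\ref{thm:key thm}, into the assertion that $x$ is not a branch point of $f$ in the classical sense. By the very definition of $\mathcal{B}_f^{*,l}$, this conversion step is exactly what is avoided: a point fails to lie in $\mathcal{B}_f^{*,l}$ precisely when $f$ admits a local left homotopy inverse $g$ of the type produced by Theorem~\ref{thm:key thm}. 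Hence for the first assertion the strategy is simply to run the proof of Theorem~\ref{thm:Theorem 1.1} verbatim, replacing each invocation of Corollary~\ref{coro:Newmann} by the tautology that $x\notin \mathcal{B}_f^{*,l}$ whenever Theorem~\ref{thm:key thm} applies at $x$.

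Concretely, for fixed $H\geq 1$ and $R>0$, I would consider any $x\in S_{H,R}\cap \mathcal{B}_f^{*,l}$ and apply Theorem~\ref{thm:key thm} at all sufficiently small scales to produce balls $U_\alpha\subset X$ playing the role of the porosity holes. The same construction as in Theorem~\ref{thm:Theorem 1.1} then shows that $S_{H,R}\cap \mathcal{B}_f^{*,l}$ and $f(S_{H,R}\cap \mathcal{B}_f^{*,l})$ are $\delta$-porous, with the same constant $\delta$ depending only on $H$ and the data of $X$ and $Y$; neither the local connectivity machinery on $X$ nor the bounded turning of $Y$ uses the manifold hypothesis. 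The analogous statements with $H_f^*$ follow by the same argument applied to the inverse dilatation. Corollaries~\ref{coro:Corollary 1.2} and~\ref{coro:Corollary 1.3} are then immediate, since their proofs use only the porosity of the relevant set and the measure-theoretic properties of doubling measures and of Ahlfors regular spaces.

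For the second part, where $Y$ is additionally assumed to be LLC$^n$, the idea is to upgrade the left homotopy inverse $g$ produced by Theorem~\ref{thm:key thm} to a two-sided homotopy inverse. The key auxiliary fact is that the sets $U_\alpha$ constructed in the proof of Theorem~\ref{thm:Theorem 1.1} also satisfy $\diam f(U_\alpha)\ll d(y_0, f(U_\alpha))$ for $\delta$ small enough; this follows from the bounded turning of $Y$, the linear dilatation bound on $S_{H,R}$, and the explicit scale comparison built into Theorem~\ref{thm:key thm}, and I would verify it by a direct scale computation. Once this is in hand, one applies Proposition~\ref{prop:auxilary prop} to the composition $f\circ g$ on a suitable ball $B(y_0,r)\subset Y$: since $Y$ is LLC$^n$, the proposition produces a homotopy $f\circ g\simeq \mathrm{Id}_{B(y_0,r)}$ whose trace stays bounded away from $y_0$ at a uniform scale. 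This promotes $g$ to a genuine homotopy inverse of $f$ in the appropriate neighborhood, so $x\notin \mathcal{B}_f^*$, and the induced maps $f_*$ and $f^*$ on local (co)homology at $x$ become isomorphisms.

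The main obstacle I expect is the compatibility of basepoints and scales between the $X$-side application of Theorem~\ref{thm:key thm} and the $Y$-side application of Proposition~\ref{prop:auxilary prop}. The first yields a left inverse $g$ defined on a small ball in $Y$ and landing in a controlled neighborhood in $X$; the second must be applied to $f\circ g$ on a possibly smaller ball $B(y_0,r)\subset Y$ on which the hypotheses of Proposition~\ref{prop:auxilary prop} hold. One must therefore carefully track how the radii and constants degrade through the composition, and verify that the diameter condition $\diam f(U_\alpha)\ll d(y_0,f(U_\alpha))$ can be arranged uniformly in the porosity construction, so that the resulting homotopy equivalence is produced at a scale that is quantitatively comparable to the scale of the porosity hole. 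Once these quantitative scale relations are verified, the rest of the argument is a straightforward transcription of the preceding results.
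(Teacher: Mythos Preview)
Your proposal is correct and follows essentially the same approach as the paper. The paper's own ``proof'' of this theorem is precisely the informal discussion preceding the statement: the generalized $n$-manifold hypothesis is used only via Corollary~\ref{coro:Newmann}, and bypassing that step leaves one with the left homotopy inverse produced by Theorem~\ref{thm:key thm}, which is exactly the defining condition for $x_0\notin\mathcal{B}_f^{*,l}$; for the two-sided case with $Y$ LLC$^n$, the paper likewise invokes the diameter estimate $\diam f(U_\alpha)\ll d(y_0,f(U_\alpha))$ and applies Proposition~\ref{prop:auxilary prop} to $f\circ g$, just as you describe. Your additional caution about matching scales between the $X$-side and $Y$-side applications is appropriate but is treated by the paper as a routine verification (``not too hard to show'').
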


It should be noticed that we require the -LLC$^n$ condition on $X$ to construct local left homotopy inverse and with the additional assumption $Y$ being -LLC$^n$ we may construct a local (two-sided) homotopy inverse. It is natural to ask what happens if only $Y$, but not $X$, is assumed to be linearly locally $n$-connected.

On the other hand, the example from Theorem~\ref{thm:counter-example} (see~\cite[Section 8.3]{gw16} for the construction) implies that the answer to this question is no. For the branched covering $f\colon X\to \R^n$ as in Theorem~\ref{thm:counter-example}, since $X$ and $\bR^n$ are topological $n$-manifolds, $\mathcal{B}_f=\mathcal{B}_f^{*,r}$, whence $\mathcal{B}_f^{*,r}$ and its image are not even null sets with respect to $\mathcal{H}^n$, let alone being porous, so the analogues to Theorem~\ref{thm:Theorem 1.1} and Corollaries~\ref{coro:Corollary 1.2} and~\ref{coro:Corollary 1.3} all fail.

\subsection{Outline of the paper}
This paper is organized as follows. Section~\ref{sec:introdcution} contains the introduction and Section~\ref{sec:preliminaries} some preliminaries. In Section~\ref{sec:BLD are QR}, we show that BLD mappings are quantitatively quasiregular in a large class of metric spaces. In Section~\ref{sec:Quantitative ENR theory for LLCn spaces}, we develop a quantitative ENR theory for linearly locally $n$-connected spaces. In particular, we obtain a generalized version of the McAuley--Robinson theorem. In Section~\ref{sec:Annular distortion}, we obtain quantitative control of the distortion of annuli at points with finite dilatation, away from a porous set. The proofs of our mains results are given in Section~\ref{sec:Proofs of the main results}. We establish the important V\"ais\"al\"a's inequality in Section~\ref{subsec:V\"ais\"al\"a's inequality}. 

\subsection{Acknowledgements}

We would like to express our gratitudes to Professor Pekka Koskela and Professor Kai Rajala for their constant encouragement throughout this work and for their useful comments. We are also grateful to Professor Pekka Pankka for his interests in our work and for his useful feedback. Last but not least, We would like to thank many of our mentors and colleagues for their valuable discussions, in particular, Dr. Thomas Z\"urcher, for his careful reading of the manuscript and for his useful comments.

Part of the research was carried out during the conferences ``XXII Rolf Nevanlinna Colloquium 2013" and ``A quasiconformal life: Celebration of the legacy and work of F.W.Gehring" at University of Helsinki, 5--12 August 2013. We would like to thank the organizers for such nice programs, from which some inspiring ideas were born. We are also grateful for the programs ``Interactions Between Analysis and Geometry" IPAM 2013 and the Jyv\"askyl\"a Summer School 2014, where part of the research was conducted. 

The research was done when M.Williams was visiting at Department of Mathematics and Statistics, University of Jyv\"askyl\"a during the period 11 August--12 September 2015. He wishes to thank the department for its great hospitality.

\section{Preliminaries}\label{sec:preliminaries}
\subsection{Generalized manifolds and topological degree}\label{subsec:generalized manifolds}
Let $H_c^*(X)$ denote the Alexander-Spanier cohomology groups of a space $X$ with compact supports and coefficients in $\mathbb{Z}$. 

\begin{definition}\label{def:cohomology manifold}
	A space $X$ is called an $n$-dimensional, $n\geq 2$, \textit{cohomology manifold} (over $\mathbb{Z}$), or a \textit{cohomology $n$-manifold} if
	\begin{description}
		\item[(a)] the cohomological dimension $\dim_{\mathbb{Z}}X$ is at most $n$, and
		\item[(b)] the local cohomology groups of $X$ are equivalent to $\mathbb{Z}$ in degree $n$ and to zero in degree $n-1$.
	\end{description}
\end{definition}
Condition (a) means that $H_c^p(U)=0$ for all open $U\subset X$ and $p\geq n+1$. Condition (b) means that for each point $x\in X$, and for each open neighborhood $U$ of $x$, there is another open neighborhood $V$ of $x$ contained in $U$ such that
\begin{equation*}
H_c^p(V)=
\begin{cases}
\mathbb{Z} & \text{if } p=n \\
0 & \text{if } p=n-1,
\end{cases}
\end{equation*}
and the standard homomorphism
\begin{equation}\label{eq:standard homomorphism}
H_c^n(W)\to H_c^n(V)
\end{equation}
is a surjection whenever $W$ is an open neighborhood of $x$ contained in $V$. As for examples of cohomology $n$-manifolds, we point out all topological $n$-manifolds are cohomology $n$-manifolds. More examples can be found in~\cite{hr02}.

\begin{definition}\label{def:generalized manifold}
	A space $X$ is called a \textit{generalized $n$-manifold}, $n\geq 2$, if it is a finite-dimensional cohomology $n$-manifold.
\end{definition}

If a generalized $n$-manifold $X$ satisfies $H_c^n(X)\simeq\mathbb{Z}$, then $X$ is said to be orientable and a choice of a generator $g_X$ in $H_c^n(X)$ is called an orientation; $X$ together with $g_X$ is an oriented generalized $n$-manifold. If $X$ is oriented, we can simultaneously choose an orientation $g_U$ for all connected open subsets $U$ of $X$ via the isomorphisms
$$H_c^n(U)\rightarrow H_c^n(U).$$

Let $X$ and $Y$ be oriented generalized $n$-manifolds, $\Omega\subset X$ be an oriented domain and let $f\colon \Omega\to Y$ be continuous. For each domain $D\subset\subset\Omega$ and for each component $V$ of $Y\backslash f(\bdary D)$, the map
$$f|_{f^{-1}(V)\cap D}:f^{-1}(V)\cap D\to V$$
is proper. Hence we have a sequence of maps
\begin{equation}\label{eq:hr 2.1}
H_c^n(V)\to H_c^n(f^{-1}(V)\cap D)\to H_c^n(D),
\end{equation}
where the first map is induced by $f$ and the second map is the standard homomorphism. The composition of these two maps sends the generator $g_V$ to an integer multiple of the generator $g_D$; this integer, denoted by $\mu(y,f,D)$, is called the \textit{local degree of $f$ at a point $y\in V$ with respect to $D$}. The local degree is an integer-valued locally constant function
$$y\mapsto \mu(y,f,D)$$
defined in $Y\backslash f(\bdary D)$. If $V\cap f(D)=\emptyset$, then $\mu(y,f,D)=0$ for all $y\in V$.

\begin{definition}\label{def:sense-preserving}
	A continuous map $f\colon X\to Y$ between two oriented generalized $n$-manifolds is said to be \textit{sense-preserving} if
	$$\mu(y,f,D)>0$$
	whenever $D\subset\subset X$ is a domain and $y\in f(D)\backslash f(\bdary D)$.
\end{definition}

The following properties of the local degree can be found in~\cite{hr02}.
\begin{proposition}[Basic Properties of the Local Degree]
	
	(\text{a}) If $f,g\colon X\to Y$ are homotopic through proper maps $h_t$, $0\leq t\leq 1$, such that $y\in Y\backslash h_t(\bdary D)$ for all $t$, then
	$$\mu(y,f,D)=\mu(y,g,D).$$
	
	(\text{b}) If $y\in Y\backslash h_t(\bdary D)$ and if $f^{-1}(y)\subset D_1\cup\cdots\cup D_p$, where $D_i$ are all disjoint domains and contained in $D$ such that $y\in Y\backslash f(\bdary D_i)$, then
	$$\mu(y,f,D)=\sum_{i=1}^p\mu(y,f,D_i).$$
	
	(\text{c}) If $f\colon D\to f(D)$ is a homeomorphism, then $\mu(y,f,D)=\pm 1$ for each $y\in f(D)$. In particular, if $f$ is a local homeomorphism, there is for each $x\in X$ a connected neighborhood $D$ such that $\mu(f(x),f,D)=\pm 1$. More generally, if $f$ is discrete and open and $x\in X$, then there is a relatively compact neighborhood $D$ of $x$ such that $\{f^{-1}(f(x))\}\cap \closure{D}=\{x\}$; the number $\mu(f(x),f,D)=:i(x,f)$ is independent of $D$ and called the \textit{local index of $f$ at $x$}.
	
	(\text{d}) If $f$ is open, discrete, and sense-preserving, then for each $x\in X$ there is a connected neighborhood $D$ as above such that $f(\bdary D)=\bdary f(D)$; $D$ is called a \textit{normal neighborhood} of $x$, and
	\begin{equation}\label{eq:def for local index}
	i(x,f)=\max_{y\in f(D)} \text{card}\{f^{-1}(y)\cap D\}.
	\end{equation}
	If $D$ is any domain such that $f(\bdary D)=\bdary f(D)$, then $D$ is called a \textit{normal domain}.
\end{proposition}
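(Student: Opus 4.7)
The plan is to verify each of (a)--(d) by reducing to naturality and homotopy invariance of Alexander--Spanier cohomology with compact supports, using the definition of $\mu(y,f,D)$ as the integer by which the composition in~\eqref{eq:hr 2.1} scales the chosen orientation classes. Since this is a classical package of facts (see~\cite{hr02}), my goal is to outline the conceptual mechanism rather than grind through every detail.

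For (a), I would first use compactness of $[0,1]$ together with the hypothesis $y \notin h_t(\partial D)$ to pass to a connected neighborhood $V$ of $y$ such that $V \cap h_t(\partial D) = \emptyset$ for every $t \in [0,1]$. Each $h_t$ then restricts to a proper map $h_t^{-1}(V) \cap D \to V$, and these assemble into a proper homotopy between the restrictions of $f$ and $g$. Proper homotopy induces the same map on $H_c^{*}$, so the first arrow in~\eqref{eq:hr 2.1} agrees for $f$ and $g$; the second arrow is the standard homomorphism and does not depend on the map, so $\mu(y,f,D) = \mu(y,g,D)$. For (b), after shrinking to a connected neighborhood $V$ of $y$ so that $f^{-1}(V) \cap D$ is contained in the disjoint union of the $D_i$, one decomposes $f^{-1}(V) \cap D = \bigsqcup_i (f^{-1}(V) \cap D_i)$; since $H_c^{n}$ of a disjoint union is a direct sum, the induced map factors as a sum, and the orientation-consistent choice of $g_{D_i}$ (via~\eqref{eq:standard homomorphism}) yields the additive formula $\mu(y,f,D) = \sum_i \mu(y,f,D_i)$.

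For (c), if $f : D \to f(D)$ is a homeomorphism then the first arrow in~\eqref{eq:hr 2.1} is the isomorphism induced by a homeomorphism, and the second is an isomorphism by the cohomology-manifold axiom~\eqref{eq:standard homomorphism} applied inside $f(D)$; hence the composition sends a generator to $\pm g_D$, forcing $\mu(y,f,D) = \pm 1$. The local-homeomorphism consequence and the well-definedness of $i(x,f) = \mu(f(x),f,D)$ on sufficiently small neighborhoods with $f^{-1}(f(x)) \cap \overline{D} = \{x\}$ follow directly from discreteness together with (a). For (d), discreteness of $f^{-1}(f(x))$ produces a relatively compact neighborhood $D_0$ of $x$ whose closure meets $f^{-1}(f(x))$ only at $x$; openness of $f$ yields a ball $B$ about $f(x)$ inside $f(D_0) \setminus f(\partial D_0)$, and I would take $D$ to be the connected component of $f^{-1}(B) \cap D_0$ containing $x$. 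Sense-preservation (positivity of $\mu$ on components of $f(D) \setminus f(\partial D)$) prevents the boundary $f(\partial D)$ from intruding into $f(D)$, giving $f(\partial D) = \partial f(D)$. The identity $i(x,f) = \max_{y \in f(D)} \card\{f^{-1}(y) \cap D\}$ then follows by choosing a regular $y$ near $f(x)$ (every preimage in $D$ has local index $1$) and applying the additivity in (b).

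The step I expect to be the hardest is part (d): arranging simultaneously that $D$ is connected, relatively compact, disjoint from other preimages of $f(x)$, \emph{and} satisfies $f(\partial D) = \partial f(D)$, while making sure the value of $\mu(f(x),f,D)$ is independent of all such choices, requires a careful interplay of openness, discreteness, and sense-preservation. The maximum characterization of $i(x,f)$ is also subtle, since one must locate a value $y \in f(D)$ whose preimages in $D$ are all regular points so that each contributes exactly $+1$ to $\mu(y,f,D)$; this is where sense-preservation is indispensable, as it rules out cancellation between positive and negative local contributions.
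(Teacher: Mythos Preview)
The paper does not prove this proposition at all; it simply states that ``the following properties of the local degree can be found in~\cite{hr02}'' and moves on. Your outline is essentially the standard argument one finds in that reference, so in that sense there is nothing to compare---you have supplied what the paper deliberately omits.

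That said, a few points in your sketch deserve tightening. In (c), the independence of $i(x,f)$ from the choice of $D$ follows from the additivity in (b), not from the homotopy invariance in (a): given two admissible neighborhoods $D_1,D_2$, one takes a smaller $D_3\subset D_1\cap D_2$ and applies (b) with a single summand. In (d), sense-preservation is not what forces $f(\partial D)=\partial f(D)$; that equality is a consequence of openness, discreteness, and the choice of $D$ as the $x$-component of $f^{-1}(B)\cap D_0$ with $B$ small enough that $f^{-1}(\overline{B})\cap\partial D_0=\emptyset$. Sense-preservation enters only in the index formula~\eqref{eq:def for local index}, where it guarantees that every local contribution in the sum from (b) is positive, so that $\operatorname{card}\{f^{-1}(y)\cap D\}\leq \mu(y,f,D)=i(x,f)$ for every $y\in f(D)$. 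Finally, your appeal to a ``regular $y$'' for the equality case requires knowing that $\mathcal{B}_f\cap D$ is not all of $D$ (equivalently, that $f(\mathcal{B}_f\cap D)\neq f(D)$); in the generalized-manifold setting this is a nontrivial input, supplied by the Chernavski\u{\i}--V\"ais\"al\"a dimension bound on $\mathcal{B}_f$.
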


\subsection{Inverse dilatation}\label{subsec:Inverse dilatation}
Let $f\colon X\rightarrow Y$ be continuous.  For each $x\in X$, denote by $U(x,r)$ the component of $x$ in $f^{-1}(B(f(x),r))$.  

Set 
\begin{align*}
	H_f^*(x,s)=\frac{L_f^*(x,s)}{l_f^*(x,s)},
\end{align*}
where
\begin{align*}
	L_f^*(x,s)=\sup_{z\in \partial U(x,s)}d(x,z)\quad \text{and}\quad l_f^*(x,s)=\inf_{z\in \partial U(x,s)}d(x,z).
\end{align*}
The \textit{inverse linear dilatation function} of $f$ at $x$ is defined pointwise by
\begin{align*}
	H_f^*(x)=\limsup_{s\to 0}H_f^*(x,s)
\end{align*}

\subsection{Doubling and Ahlfors regular metric spaces}\label{subsec:Doubling and Ahlfors regular metric spaces}

A metric space $X$ is called \textit{doubling with constant $N$}, where $N\geq 1$ is an integer, if for each ball $B(x,r)$, every $r/2$-separated subset of $B(x,r)$ has at most $N$ points. We also say that $X$ is \textit{doubling} if it is doubling with some constant that need not be mentioned. It is clear that every subset of a doubling space is doubling with the same constant. A metric space $X$ is called \textit{locally doubling} if there is an integer $N>0$ such that for each $x\in X$, there exists a ball $B(x,r)$ that is doubling with constant $N$.

A Borel regular measure $\mu$ on a metric space $(X,d)$ is called a \textit{doubling measure} if every ball in $X$ has positive and finite measure and there exists a constant $C_\mu\geq 1$ such that
\begin{equation}\label{eq:doubling measure}
\mu(B(x,2r))\leq C_\mu \mu(B(x,r))
\end{equation}
for each $x\in X$ and $r>0$. We call $\mu$ a \textit{locally doubling measure} if there exists a constant $C_\mu\geq 1$ such that for each $x\in X$, there is a radius $r_x>0$ with~\eqref{eq:doubling measure} holds for all $r\in (0,r_x)$.

A metric measure space $(X,d,\mu)$ is \textit{Ahlfors $Q$-regular}, $1\leq Q<\infty$, if there exists a constant $C\geq 1$ such that
\begin{equation}\label{eq:Ahlfors regular measure}
C^{-1}r^Q\leq \mu(B(x,r))\leq Cr^Q
\end{equation}
for all balls $B(x,r)\subset X$ of radius $r<\diam X$. It is well-known that if $(X,d,\mu)$ is an Ahlfors $Q$-regular space, then
\begin{equation}\label{eq: comparable with Hausdorff measure}
\mu(E)\approx \mathscr{H}^Q(E)
\end{equation}
for all Borel sets $E$ in $X$; see e.g.~\cite[Chapter 8]{h01}. A metric space $X$ is called \textit{locally Ahlfors $Q$-regular}, $1\leq Q<\infty$, if there is a constant $C\geq 1$ such that for each $x\in X$, there exists a ball $B(x,r_x)\subset X$ that is Ahlfors $Q$-regular with constant $C$.

\subsection{Loewner spaces}\label{subsec:Loewner spaces}

Let $X=(X,d,\mu)$ be a metric measure space and let $\Gamma$ a family of curves in $X$.  A Borel function $\rho\colon X\rightarrow [0,\infty]$ is \textit{admissible for $\Gamma$} if for every locally rectifiable curves $\gamma\in \Gamma$,
\begin{equation}\label{admissibility}
\int_\gamma \rho\,ds\geq 1\text{.}
\end{equation}
The $p$-modulus of $\Gamma$ is defined as
\begin{equation*}
\modulus_p(\Gamma) = \inf \left\{ \int_X \rho^p\,d\mu:\text{$\rho$ is admissible for $\Gamma$} \right\}.
\end{equation*}

\begin{definition}\label{def:Loewner space}
	Let $(X,d,\mu)$ be a pathwise connected metric measure space. We call $X$ a \textit{$Q$-Loewner space} if there is a function $\phi:(0,\infty)\to (0,\infty)$ such that
	\begin{equation*}
	\modulus_Q(\Gamma(E,F,X))\geq \phi(\zeta(E,F))
	\end{equation*}
	for every non-degenerate compact connected sets $E,F\subset X$, where
	\begin{equation*}
	\zeta(E,F)=\frac{\dist(E,F)}{\min\{\diam E,\diam F\}}.
	\end{equation*}
\end{definition}

By~\cite[Corollary 5.13]{hk98}, a complete (or equivalently proper) Ahlfors $Q$-regular metric measure space that supports a $(1,Q)$-Poincar\'e inequality is $Q$-Loewner.

\subsection{Density, porosity and generalized branch set }\label{subsec:Density and Porosity}
Let $S\subset X$ be a fixed set. We say $S$ is \textit{$\delta$-dense in $U\subset X$} if $U\subset \cup_{x\in S}B(x,\delta)$. We
say \textit{$S$ is $\delta$-dense at $x_0$}, \textit{at scale $R_0$}, if $S$ is $\delta r$-dense in $B(x_0,r)$ for each $r<R_0$. We also simply say $S$ is \textit{$\delta$-dense at $x_0$}, if it is $\delta$-dense at some scale. 

A set $E\subset X$ is said to be $\alpha$-\textit{porous} if for each $x\in E$, 
\begin{align*}
	\liminf_{r\to 0}r^{-1}\sup\big\{\rho:B(z,\rho)\subset B(x,r)\backslash E\big\}\geq \alpha.
\end{align*}
A subset $E$ of $X$ is called  \textit{countablely ($\sigma$-)porous} if it is a countable union of ($\sigma$-)porous subsets of $X$.

Fix $x_0\in X$, $y_0=f(x_0)$, $r>0$.  We say a map $g\colon B(y_0,r)\rightarrow X$ is a \textit{local left homotopy inverse} for $f$ at $x_0$ if $g\circ f|_{U(x_0,r)}$ is homotopic to the identity on $U(x_0,r)$, via a homotopy $H_t$ for which $x_0\notin H_t(\partial U(x_0,r))$ for all $t$.
Similarly, $g$ is a \textit{local right homotopy inverse} for $f$ if $f\circ g$ is homotopic to the identity on $B(y_0,r)$, via a homotopy $H_t$ with $y_0\notin H_t(\partial B(y_0,r))$ for all $t$. If $g$ is a left and right local homotopy inverse, we simply call it a \textit{local homotopy inverse}.

We denote by $\mathcal{B}_f^*$ the \textit{homotopy branch set} of $f$, \ie the set of points in $X$ for which $f$ has no (two-sided) local homotopy inverse. We also let $\mathcal{B}_f^{*,l}$ denote the \textit{left homotopy branch set}, \ie the set of points in $X$ at which $f$ has no left homotopy inverse. It is clear that if $X$ and $Y$ are generalized $n$-manifolds, then $\mathcal{B}_f=\mathcal{B}_f^{*,l}$.

\section{BLD mappings between Loewner spaces are quasiregular}\label{sec:BLD are QR}
In the section, we take as standing assumptions that $X$ and $Y$ are two Ahlfors $Q$-regular, $Q$-Loewner, generalized $n$-manifolds. Under these assumptions, it follows from~\cite[Corollary 5.3]{hk98} and~\cite[Theorem 7.3.2]{hkst15} that $X$ and $Y$ are quantitatively \textit{quasiconvex}, i.e. each two points in the space can be joined by a curve whose length is at most a constant multiple the distance between these two points. Note that generalized manifolds of type $A$, considered by Heinonen and Rickman~\cite{hr02}, are very special cases of metric spaces that satisfy our standing assumptions.

Our aim of this section is to show that BLD mappings between such spaces are quasiregular, quantitatively. Before stating our main result, let us recall first the definition of a BLD mapping.

\begin{definition}\label{def:BLD mapping}
	A branched covering $f\colon X\to Y$ between two metric spaces is said to be an \textit{$L$-BLD}, or a \textit{mapping of $L$-bounded length distortion}, $L\geq 1$, if 
	\begin{align*}
	L^{-1}l(\alpha)\leq l(f\circ \alpha)\leq Ll(\alpha)
	\end{align*}
	for all non-constant paths $\alpha$ in $X$, where $l(\gamma)$ denotes the length of a curve $\gamma$ in a metric space.
\end{definition}

For a continuous mapping $f\colon X\to Y$ between two metric spaces, we set
\begin{align*}
L_f(x)=\limsup_{y\to x}\frac{d(f(x),f(y))}{d(x,y)}\quad\text{and}\quad l_f(x)=\liminf_{y\to x}\frac{d(f(x),f(y))}{d(x,y)}.
\end{align*}

\begin{proposition}\label{prop:BLD implies quasiregular}
	Let $f\colon X\to Y$ be a branched covering. Consider for the following statements:
	
	1). $f$ is $L$-BLD;
	
	2). For each $x\in X$, there exists $r_x>0$ such that 
	\begin{align*}
	\frac{d(x,y)}{c}\leq d(f(x),f(y))\leq cd(x,y)
	\end{align*}
	for all $y\in B(x,r_x)$;
	
	3). $L_f(x)\leq c$ and $l_f(x)\geq \frac{1}{c}$ for each $x\in X$;
	
	4). $f$ is $K$-quasiregular, locally $M$-Lipschitz and $J_f(x)\geq c$ for a.e. $x\in X$.
	
	We have 1) $\Rightarrow$ 2) $\Rightarrow$ 3) $\Rightarrow$ 4).
	Moreover, all the constants involved depend quantitatively only on each other and on the data associated to $X$ and $Y$. 
\end{proposition}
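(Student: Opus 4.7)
The plan is to chase the three implications 1) $\Rightarrow$ 2) $\Rightarrow$ 3) $\Rightarrow$ 4) in order, exploiting the quantitative quasiconvexity of $X$ and $Y$ (which, under the standing Ahlfors $Q$-regular/$Q$-Loewner assumptions, follows from~\cite{hk98,hkst15}) together with path-lifting for discrete open mappings on locally compact generalized $n$-manifolds (see~\cite{hr02}). All constants produced depend only on $L$ and on the data of $X$ and $Y$.

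For 1) $\Rightarrow$ 2), fix $x \in X$ and take $y$ close to $x$. The upper bound $d(f(x),f(y)) \leq LC_0\, d(x,y)$ is immediate by pushing a $C_0$-quasiconvex curve $\gamma$ from $x$ to $y$ through $f$ and applying the BLD inequality $l(f\circ\gamma) \leq L\,l(\gamma)$. The lower bound requires more care: I would select a $C_0'$-quasiconvex curve $\beta$ in $Y$ from $f(x)$ to $f(y)$ and lift it through $f$ to a path $\tilde\beta$ starting at $x$, yielding $l(\tilde\beta) \leq L\,l(\beta) \leq LC_0'\, d(f(x),f(y))$. The subtlety is that $\tilde\beta$ must terminate at $y$ and not at some other preimage of $f(y)$; this is arranged by performing the lift inside a normal neighbourhood $U$ of $x$ so small that the length estimate forces $\tilde\beta$ to remain in $U$, after which discreteness of $f^{-1}(f(y)) \cap \overline{U}$ pins the endpoint to $y$.

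The implication 2) $\Rightarrow$ 3) is direct: $L_f(x)$ and $l_f(x)$ are precisely the $\limsup$ and $\liminf$ of the ratios controlled pointwise by 2). For 3) $\Rightarrow$ 4), I would first note that, given $\epsilon > 0$, the definition of $L_f$ yields $\delta(x,\epsilon) > 0$ with $L_f(x,r) \leq (c+\epsilon)r$ for $r < \delta$, and similarly $l_f(x,r) \geq (1/c-\epsilon)r$; hence $H_f(x) \leq c^2$, giving $K$-quasiregularity with $K$ depending only on $c$. The local Lipschitz property follows from the pointwise bound $L_f \leq c$ together with quasiconvexity of $X$: covering a quasiconvex curve $\gamma$ by finitely many small balls on which the pointwise bound enforces $d(f\cdot,f\cdot) \leq (c+\epsilon)d(\cdot,\cdot)$, and summing over a fine partition, yields $l(f\circ\gamma) \leq (c+\epsilon)l(\gamma)$, so $f$ is locally $C_0 c$-Lipschitz. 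Finally, the a.e. lower Jacobian bound comes from $l_f(x) \geq 1/c$ together with discrete openness: this forces $f(B(x,r))$ to contain a ball around $f(x)$ of radius roughly $r/c$ for small $r$, and Ahlfors $Q$-regularity of $X$ and $Y$ then gives $\mu_Y(f(B(x,r)))/\mu_X(B(x,r)) \gtrsim c^{-Q}$ uniformly in small $r$, from which $J_f(x) \gtrsim c^{-Q}$ a.e. by standard differentiation of the pullback measure $f^{*}\mu_Y$.

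The main obstacle is the lower bound step of 1) $\Rightarrow$ 2): verifying that the lift of a quasiconvex curve in $Y$ genuinely reaches $y$ (rather than merely some other point of $f^{-1}(f(y))$) requires careful use of normal neighbourhoods, the metric length estimate from the BLD condition, and discreteness of point preimages. Everything else is a quantitative metric analogue of the classical Euclidean argument, with the constants tracked through each implication to produce the quantitative dependence asserted in the proposition.
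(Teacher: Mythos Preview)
Your proposal is largely correct and tracks the paper's proof closely, but there is one real gap and one methodological difference worth flagging.

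\textbf{The gap in 1) $\Rightarrow$ 2).} You lift a quasiconvex curve $\beta$ from $f(x)$ to $f(y)$ starting at $x$, and then say that ``discreteness of $f^{-1}(f(y))\cap\overline{U}$ pins the endpoint to $y$''. Discreteness alone does not do this: in a normal neighbourhood $U$ of $x$ one is guaranteed $f^{-1}(f(x))\cap U=\{x\}$, but $f^{-1}(f(y))\cap U$ may contain several points (think of $z\mapsto z^2$), and a lift starting at $x$ can end at any of them. The fix, which is what the paper does, is to reverse direction: lift the curve from $f(y)$ back to $f(x)$ \emph{starting at $y$}. By properness of $f|_U$ the lift stays in $U$ and must terminate in $f^{-1}(f(x))\cap U=\{x\}$, hence ends at $x$. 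This yields $d(x,y)\leq l(\tilde\beta)\leq L\,l(\beta)\leq LC_0'\,d(f(x),f(y))$ as required. You correctly identified this step as the main obstacle; only the proposed resolution needs this adjustment.

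\textbf{The difference in 3) $\Rightarrow$ 4).} Your route to quasiregularity is to read off $H_f(x)\leq c^2$ directly from 3), giving metric $c^2$-quasiregularity immediately. The paper instead shows $g_f^Q\leq L_f^Q\leq c^{2Q}l_f^Q\leq CJ_f$ a.e.\ to obtain \emph{analytic} quasiregularity, and then invokes the equivalence theorem of~\cite{gw16} to return to the metric notion. Your direct argument is simpler and is precisely what the paper records separately in Remark~\ref{rmk:on characterization of BLD}(i); the paper's detour through the analytic definition has the advantage of producing the Jacobian lower bound and membership in $N^{1,Q}_{\loc}$ in the same computation. Your separate arguments for local Lipschitzness (pointwise bound $L_f\leq c$ plus quasiconvexity of $X$) and for $J_f\gtrsim c^{-Q}$ a.e.\ (via $l_f\geq 1/c$, the inclusion $B(f(x),l_f(x,r))\subset f(B(x,r))$ coming from openness and bounded turning of $Y$, and Ahlfors $Q$-regularity) are both correct and match what the paper uses implicitly.
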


\begin{proof}
	1) implies 2): As pointed out in the beginning of this section, our standing assumptions on $X$ implies that it is quasiconvex, quantitatively. Thus, for each $x,y\in X$, we may choose a quasiconvex curve $\gamma\subset X$ that joins $x$ to $y$. Then
	\begin{align*}
	d(f(x),f(y))\leq l(f(\gamma))\leq Ll(\gamma)\leq CLd(x,y). 
	\end{align*}
	For the reverse inequality, fix a point $x\in X$ and we may work in a normal neighborhood $U$ of $x$. Namely, consider $f\colon U\to B(f(x),r)$, where $B(f(x),r)=f(U)$. Since $Y$ is quasiconvex, for each $x,y\in X$ with $f(y)\in B(f(x),r/C)$ and so we may fix a quasiconvex curve $\gamma'\subset Y$ that joints $f(x)$ to $f(y)$ in $B(f(x),r)$. By the path-lifting property of discrete and open mappings~\cite{r73}, we know there exists a curve $\gamma\subset X$ that joins $x$ to $y$. Thus there exists $r_x>0$ such that
	\begin{align*}
	d(x,y)\leq l(\gamma)\leq Ll(\gamma')\leq LCd(f(x),f(y))
	\end{align*}
	for all $y\in B(x,r_x)$.
	
	2) implies 3) is clear.

	3) implies 4): 3) implies that $f$ is locally $c$-Lipschitz and hence belongs to $N^{1,Q}_{loc}(X,Y)$ (see~\cite{hkst15} for the definition). Moreover, since $L_f$ is an \textit{upper gradient} of $f$ (see e.g.~\cite{hkst15}), 
	\begin{align*}
	g_f^Q(x)\leq L_f(x)^Q\leq c^{2Q}l_f(x)^Q,
	\end{align*}
	where $g_f$ is the \textit{minimal $Q$-weak upper gradient} of $f$. Since $X$ and $Y$ are Ahlfors $Q$-regular, 
	\begin{align*}
	l_f(x)^Q\leq C\limsup_{r\to 0}\frac{\mathcal{H}^Q\Big(f(B(x,r))\Big)}{r^Q}\leq CJ_f(x)
	\end{align*}
	for a.e. $x\in X$. This implies that $f$ is \textit{analytically $C$-quasiregular}. Since our metric spaces are Ahlfors $Q$-regular and $Q$-Loewner, by~\cite[Theorem A]{gw16}, analytically quasiregular mappings are quantitatively equivalent with (metrically) quasiregular mappings, and so 4) follows.
	
\end{proof}

\begin{remark}\label{rmk:on characterization of BLD}
i). It is clear that Proposition~\ref{prop:BLD implies quasiregular} 2) implies that $H_f(x)\leq c^2$ for all $x\in X$ and hence $f$ is metrically $c^2$-quasiregular. In particular,this means that L-BLD mappings are always metrically $H$-quasiregular, quantitatively.
	
ii). If $X$ and $Y$ are the Euclidean spaces, then by~\cite[Theorem 2.16]{mv88}, we have 4) implies 1) as well. Thus Proposition~\ref{prop:BLD implies quasiregular} provides a quantitative characterization of BLD mappings in terms of quasiregular mappings. This characterization has been generalized to a greater generality in~\cite[Theorem 6.18]{hr02}, namely, for mappings from a generalized $n$-manifold of type $A$ to $\R^n$. In our following up work~\cite{gw16}, we have shown that such a  characterization holds in a much wider situation.
\end{remark}

\section{Quantitative ENR theory for linearly locally $n$-connected spaces}\label{sec:Quantitative ENR theory for LLCn spaces}

In the topological setting, the way to prove that a locally compact, finite dimensional, separable and locally contractible space $X$ is an \textit{abstract neighborhood retract} (or ANR, for short), actually, an \textit{Euclidean neighborhood retract} (or ENR, for short) (see~\cite{h65} for definitions and general properties of ANR's and ENR's), is to first embed $X$ with a proper map into some Euclidean space $\R^n$ via \textit{Whitney's embedding theorem}, and then construct a retraction $r\colon U\to X$ inductively on the $k$-skeletons of a \textit{Whitney decomposition} of $U\backslash X$, where $U$ is taken as a union of neighborhoods whose intersections with $X$ are small enough to allow
repeated applications of the local contractibility property. Then when $X\subset Y$, extending the embedding to a continuous map $f\colon Y\to \R^n$ gives us a retraction $r$ from the neighborhood $f^{-1}(U)$ onto $X$.

Moreover, many of the general topological properties of an ANR $X$
involve construction of homotopies between ``close" maps into $X$. The
method involves first embedding $X$ into a locally convex topological vector space, and taking a retraction $r$ from a neighborhood $U$. If the
two maps are close enough so that the image of the linear homotopy
between them lies in $U$, the composing the linear homotopy with $r\circ f|_{f^{-1}(U)}$
yields a homotopy entirely contained in $X$.

Thus, in the finite dimensional case, most of the important facts about ANR's can be obtained rather directly from the specific retraction that was constructed from the Whitney embedding theorem, and repeated applications of local contracitibilty. This partially motivates our approach.

In our setting, we suppose that $X$ is separable, locally compact, locally doubling, and
locally $\lambda$-LLC$^n$. This implies $X$ is an ANR, but we can in fact obtain more quantitative
results. Since all of our considerations are local, we may ease the exposition by assuming
that $X$ is \textit{precompact} (that is, its completion $X$ is compact), doubling, and that every ball
$B\subset X$ is contractible in $\lambda B$, provided that $\closure{\lambda B}\cap \partial X=\emptyset$; here $\partial X=\closure{X}\backslash X$ - note that
by local compactness, $X$ is open in $\closure{X}$.

We recall some basic results in quantitative topology. We essentially follow the logic of~\cite{p90,s96},
showing that close maps are homotopic by homotopies that don't move points very far - but
we must use some care to ensure that individual points aren't moved too close to each other.
Though our applications in the rest of the paper assume linear local contractibility, we give
some of the results here in terms of linear local $n$-connectivity, in keeping with the spirit of~\cite{p90}.

Let $X$ be a locally complete metric space, with completion $\closure{X}$ and boundary $\partial X=\closure{X}\backslash X$. We say that $X$ is \textit{$\lambda$-linearly locally $n$-connected} (abbreviated $\lambda$-LLC$^n$ ) if for each $x\in X$ and $r<2d(x,\partial X)/\lambda$, the ball $B(x, r)$ is $n$-connected in $B(x, \lambda r/2)$. We say $X$ is locally $\lambda$-LLC$^n$ if for each $x\in X$ there is a neighborhood $U\ni x$ that is $\lambda$-LLC$^n$. When $\lambda$ is
unimportant, we omit it and say that $X$ is \textit{linearly locally $n$-connected} (-LLC$^n$). We define \textit{$\lambda$-linear local contractibility} (abbreviated $\lambda$-LLC$^*$) in the same way as above, requiring
instead that $B(x, r)$ be contractible inside $B(x, \lambda r/2)$.

\begin{remark}\label{rmk:on def of LLCn}
Our definitions are quantitatively equivalent to the usual ones elsewhere in
the literature. The factor of 1/2 is included purely for convenience, as it implies that if
$X$ is $\lambda$-LLC$^n$ , then for each $k\leq  n + 1$, and each map $\sigma\colon \partial\Delta^k\to U$ with $\diam \sigma(F)<d(\sigma(F),\partial X)/\lambda$ on each face $F\subset\partial \Delta^k$ , $\sigma$ extends to a map $\sigma'\colon\Delta^k\to X$, with $\diam \sigma'(\Delta^k)\leq \lambda \diam \sigma(\partial \Delta^k)$.
\end{remark}

\begin{remark}\label{rmk:on usual local n connectivity}
The -LLC$^n$ condition is a stronger, quantitative form of local $n$-connectivity -
the latter notion assumes each neighborhood of $U$ of $x$ has a smaller neighborhood $V\subset U$
that is $n$-connected in $U$; $\lambda$-LLC$^n$ implies that if $U \supset B(x, r)$, then we may additionally take $V = B(x, 2r/\lambda)$. The same analogy holds likewise between local contractibility and -LLC$^*$.
\end{remark}

\begin{remark}\label{rmk:on bounded turning}
Recall that a metric space $X$ is said to have \textit{$\lambda$-bounded turning} if every pair
$x_1,x_2\in X$ may be joined with a continuum with diameter at most $\lambda d(x_1,x_2)$. Since local
connectivity is equivalent to local path connectivity (under our standing local compactness
assumption), it follows that if $X$ has $\lambda$-bounded turning, then it is $2\lambda$-LLC$^0$, and conversely, if $X$ is $\lambda$-LLC$^0$, then it has $\lambda$-bounded turning.
\end{remark}

\begin{remark}\label{rmk:on LLC2}
We caution the reader that the -LLC$^0$ condition is sometimes denoted ``-LLC$_1$",
and is only half of what is typically referred to in the literature as ``\textit{linear local connectivity}"
or -LLC; -LLC also includes a dual assumption, sometimes called ``-LLC$_2$", that points
outside $B(x, r)$ may be joined by a path lying outside of $B(x, r/\lambda)$. This can be thought of
a quantitative version of $X$ having no local cut points.

Many interesting spaces satisfy the -LLC$_2$ condition (e.g., Loewner spaces of dimension
greater than 1). Moreover, without it, a few technical complications arise (see below).
Despite this, we will typically not assume -LLC$_2$. The reason for this is that our most general
results avoid the use of the strong analytic properties of Loewner spaces, and thus have potential to be applied to trees and other 1-dimensional spaces where the -LLC$_2$ condition
may fail.
\end{remark}

We need the following basic extension result, which follows by an induction on the $k$-
skeleton. We suppose $P$ is an $n$-dimensional simplicial complex, and $Q \subset P$ is a subcomplex
containing $P^0$. (The statement in~\cite{p90} is slightly different, but the proof is the same.)

\begin{lemma}[\cite{p90}, Section 2, Main Lemma]\label{lemma:petersen}
Let $X$ be $\lambda$-LLC$^{n-1}$, and let $\phi\colon Q\to X$ be a continuous map such that $\diam \phi(\Delta\cap Q)<d(\phi(\Delta\cap Q),\partial X)/\lambda^{n}$ for each simplex $\Delta\subset P$. Then $\phi$ extends to a continuous map $\psi\colon P\to X$, such that for each simplex $\Delta\subset P$, 
\begin{align*}
\diam \psi(\Delta)\leq \lambda^n\diam \phi(\Delta\cap Q).
\end{align*}
\end{lemma}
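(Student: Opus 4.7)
The plan is to build $\psi$ inductively on the $k$-skeleta $P^k\cup Q$ for $k=0,1,\dots,n$, extending one dimension at a time by filling in the interiors of $k$-simplices $\Delta\not\subset Q$ using the $\lambda$-LLC$^{k-1}$ property of $X$. At stage $0$ take $\psi^{(0)}=\phi|_{P^0}$, which is defined since $P^0\subset Q$. At stage $k$, for each $k$-simplex $\Delta\not\subset Q$ the boundary $\partial\Delta\subset P^{k-1}$ already carries the previously built map $\psi^{(k-1)}$, so the extension principle recorded in Remark~\ref{rmk:on def of LLCn} produces a continuous extension to $\Delta$ satisfying $\diam\psi^{(k)}(\Delta)\leq\lambda\,\diam\psi^{(k-1)}(\partial\Delta)$. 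Simplices already lying in $Q$ keep their original $\phi$-values, and since consecutive stages agree on shared faces, the assembled map $\psi$ is globally continuous on $P$.

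The diameter bookkeeping is the heart of the argument. I would prove by induction on $k$ that for every simplex $\Delta$ of dimension at most $k$,
\begin{equation*}
\diam\psi^{(k)}(\Delta)\leq c_k\,\diam\phi(\Delta\cap Q),
\end{equation*}
with $c_k\leq\lambda^k$ after a suitable choice of constants. The key combinatorial observation is that any two $(k{-}1)$-faces of a $k$-simplex share a vertex (trivially for $k\geq 2$, since they jointly contain $k-1$ common vertices; for $k=1$ both $0$-faces already lie in $Q$ and the diameter bound is immediate from the hypothesis). Using this shared vertex and the triangle inequality, one gets $\diam\psi^{(k-1)}(\partial\Delta)\lesssim\max_{F\prec\Delta}\diam\psi^{(k-1)}(F)$; combined with the containment $F\cap Q\subset\Delta\cap Q$ and the inductive hypothesis, this forces $\diam\psi^{(k)}(\Delta)\leq\lambda^k\,\diam\phi(\Delta\cap Q)$, after absorbing the combinatorial factor into the sharpened LLC normalization phrased in Remark~\ref{rmk:on def of LLCn}. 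Iterating to $k=n$ yields the stated bound $\diam\psi(\Delta)\leq\lambda^n\,\diam\phi(\Delta\cap Q)$.

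The principal obstacle is verifying the quantitative hypothesis of the $\lambda$-LLC$^{k-1}$ extension at every stage, namely that a ball containing $\psi^{(k-1)}(\partial\Delta)$ has radius strictly less than $2\,d(\,\cdot\,,\partial X)/\lambda$, so the LLC condition actually applies. This is precisely what the factor $\lambda^n$ in the standing hypothesis $\diam\phi(\Delta\cap Q)<d(\phi(\Delta\cap Q),\partial X)/\lambda^n$ is calibrated for: at stage $k$ the accumulated diameter is of order $\lambda^{k-1}\diam\phi(\Delta\cap Q)$, while the distance of the image to $\partial X$ remains bounded below by $\lambda^n\diam\phi(\Delta\cap Q)$, leaving a safety factor of at least $\lambda^{n-k+1}$. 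Threading this quantitative check through each induction step is the only delicate point; once it is in place, the construction terminates at $k=n$ with the desired extension $\psi$ and the promised diameter control.
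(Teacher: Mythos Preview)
Your approach is correct and is exactly the one the paper indicates: the paper does not give its own proof of this lemma, but cites \cite{p90} and notes only that it ``follows by an induction on the $k$-skeleton,'' which is precisely the skeleton-by-skeleton extension with diameter bookkeeping via Remark~\ref{rmk:on def of LLCn} that you outline. Your sketch is a faithful expansion of that argument.
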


\begin{proposition}\label{prop:auxilary prop}
Suppose that $Z$ is an ANR with $\dim(Z)\leq n$, that $X$ is $\lambda$-LLC$^n$, and
that $g_0,g_1\colon Z\to X$ satisfy $d(g_0(z), g_1(z))<d(\{g_0(z), g_1(z)\}, \partial X)/\lambda^{n+1}$, for each $z\in Z$. Then for each $\varepsilon>0$, there is a homotopy $H\colon [0, 1] \times Z \to X$ such that for every $z\in Z$,
\begin{align*}
\diam H\big([0,1]\times \{z\}\big)\leq 4(1+\varepsilon)\lambda^{n+1}d(g_0(z),g_1(z)).
\end{align*}
\end{proposition}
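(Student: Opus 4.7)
The plan is to follow the standard quantitative ANR strategy alluded to before Lemma~\ref{lemma:petersen}: since $\dim Z \leq n$, I would approximate $Z\times[0,1]$ by a simplicial complex $P$ of dimension at most $n+1$ via a nerve-of-cover construction, prescribe values on the ``ends'' using $g_0,g_1$, and apply Lemma~\ref{lemma:petersen} to extend across the interior. Because $P$ has dimension $n+1$, Petersen's lemma will contribute a factor $\lambda^{n+1}$ to the diameter of each simplex image, matching the exponent in the conclusion.

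Concretely, I would fix $\varepsilon>0$ and choose an auxiliary small parameter $\eta=\eta(\varepsilon)>0$ at the end of the argument. By continuity of $g_0,g_1$ and of the function $z\mapsto d(g_0(z),g_1(z))$, pick a locally finite open cover $\{U_\alpha\}$ of $Z$ so fine that on each $U_\alpha$ both $g_0|_{U_\alpha}$ and $g_1|_{U_\alpha}$ oscillate by at most $\eta\inf_{w\in U_\alpha}d(g_0(w),g_1(w))$ and $d(g_0(\cdot),g_1(\cdot))$ itself varies by at most the same amount. Since $\dim Z\leq n$, additionally arrange that the nerve $N$ has dimension $\leq n$; let $\pi\colon Z\to|N|$ be the canonical partition-of-unity map, choose a point $x_\alpha\in U_\alpha$ for each vertex $\alpha$, and give $|N|\times[0,1]$ the standard prism triangulation to form a complex $P$ of dimension $\leq n+1$. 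Define $\phi_0\colon P^0\to X$ on the $0$-skeleton by $\phi_0(\alpha,0)=g_0(x_\alpha)$ and $\phi_0(\alpha,1)=g_1(x_\alpha)$. For any top-dimensional simplex $\Delta\subset P$, the indexing sets $U_{\alpha_j}$ share a common footprint containing some point $z$, and the vertex images all lie in $B(g_0(z),\eta d)\cup B(g_1(z),\eta d)$ with $d:=d(g_0(z),g_1(z))$, giving $\diam\phi_0(\Delta^0)\leq(1+2\eta)d$.

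The hypothesis $d(g_0(z),g_1(z))<d(\{g_0(z),g_1(z)\},\partial X)/\lambda^{n+1}$ is precisely calibrated to satisfy the smallness condition of Lemma~\ref{lemma:petersen} at every skeletal step, so the lemma will produce an extension $\phi\colon P\to X$ with $\diam\phi(\Delta)\leq \lambda^{n+1}(1+2\eta)d$. I would then set $\tilde H(z,t)=\phi(\pi(z),t)$; the trajectory $\tilde H(\{z\}\times[0,1])$ lies in the union of the (at most $n+1$) top-dimensional prism simplices over the minimal face of $|N|$ containing $\pi(z)$, which are pairwise adjacent along codimension-one faces and share vertex images, so the union has diameter at most $(2\lambda^{n+1}+1)(1+2\eta)d\leq 3\lambda^{n+1}(1+2\eta)d$ (using $\lambda\geq 1$). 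To match the endpoints exactly to $g_0,g_1$, I would prepend and append short correction homotopies at $t=0$ and $t=1$ built analogously via Lemma~\ref{lemma:petersen} on a single time slice, each contributing at most $\lambda^{n+1}\eta d$ to the trajectory. Summing yields $\diam H([0,1]\times\{z\})\leq \lambda^{n+1}(3+8\eta)d\leq 4(1+\varepsilon)\lambda^{n+1}d$ once $\eta$ is chosen small enough that $8\eta\leq 1+4\varepsilon$. The hard part will be the bookkeeping rather than any single deep step: one must simultaneously verify the smallness hypothesis of Lemma~\ref{lemma:petersen} at every skeletal stage (which the $\lambda^{n+1}$-factor in the assumed gap was engineered to allow), keep the endpoint corrections inside the same ambient ball so that the LLC$^n$ property applies to them as well, and choose $\eta$ to absorb all the additive errors into the multiplicative $(1+\varepsilon)$ slack.
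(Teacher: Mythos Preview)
Your overall strategy---fine cover, nerve of dimension $\leq n$, prism triangulation, Petersen extension---is exactly the paper's. The gap is in your endpoint correction. Your main homotopy $\tilde H(z,t)=\phi(\pi(z),t)$ factors through the nerve map $\pi$, so $\tilde H(\cdot,0)$ is the Petersen extension of the vertex data $\alpha\mapsto g_0(x_\alpha)$, evaluated at $\pi(z)$; this is \emph{not} $g_0(z)$, and no further application of Lemma~\ref{lemma:petersen} ``on a single time slice'' will fix that. Any map you build that way still factors through $|N|$ via $\pi$, so it can only reproduce the simplicial approximation of $g_0$, never $g_0$ itself. The recursion does not terminate, and as written you never use the ANR hypothesis on $Z$ at all---the nerve map $\pi$ exists for any paracompact space.

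The paper resolves this by invoking the ANR property in the correct place: instead of just the nerve map, it takes a \emph{dominating complex} $Q'$ for the cover $\gamma$, which comes equipped with maps $\iota\colon Z\to Q'$, $\rho\colon Q'\to Z$, and a homotopy $H^\gamma\colon I\times Z\to Z$ between $\rho\circ\iota$ and $\mathrm{id}_Z$ whose tracks lie in single members of $\gamma$. The middle third of the homotopy is $\psi(3t-1,\iota(z))$ exactly as in your construction, but the outer thirds are simply $g_0\circ H^\gamma$ and $g_1\circ H^\gamma$. These start \emph{exactly} at $g_0(z)$ and $g_1(z)$, and their tracks have diameters at most $\diam g_i(V)$ for the relevant $V\in\gamma$---contributing only an additive error absorbed by the $(1+\varepsilon)$ slack, with no extra $\lambda^{n+1}$ factor. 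A minor secondary point: you also need the paper's preliminary reduction to the set where $g_0(z)\neq g_1(z)$, since your cover condition ``oscillation $\leq\eta\inf_{U_\alpha}d(g_0,g_1)$'' is vacuous where that infimum is zero.
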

\begin{proof}
We may with no loss of generality reduce to the case that $f(z)\neq g(z)$ for all $z\in Z$.
Indeed, having proved this special case, applying the proposition to the restrictions $g_i|_{Z^+}$,
where $Z^+=\{z\in Z:g_0(z)\neq g_1(z)\}$, gives a homotopy that extends continuously to a
constant homotopy on $\{z\in Z:g_0(z)=g_1(z)\}$.

In light of the aforementioned reduction, we let $\gamma$ be an open covering of $Z$ such that for
each $V\in \gamma$, 
\begin{align*}
\diam g_0(V)+\diam g_1(V)<(1+\varepsilon/3)d(g_0(V),g_1(V)).
\end{align*}
Let $Q'$ be a dominating complex for $\gamma$, with $\dim(Q') = \dim(Z)\leq  n$, \ie, there are maps $\rho\colon Q'\to Z$, $\iota\colon Z\to Q'$,
such that $\rho\circ \iota$ is homotopic to the identity, via a homotopy $H^\gamma\colon I\times Z\to Z$ such that for each $\Delta'\subset Q'$, there is some $V\in\gamma$ for which $H^\gamma\big(I\times \iota^{-1}(\Delta')\big)\subset V$.

Now let $P=[0, 1]\times Q'$, $Q= \{0,1\}\times Q'\subset P$. Define $\phi\colon Q\to X$ by $\phi(i, q)=g_i(\rho(q'))$. Note that $P$ has a triangulation, where each simplex $\Delta\subset P$ lies inside $[0, 1]\times \Delta'$ for some $\Delta'\subset Q'$. It follows that
\begin{align*}
\diam &\phi(Q\cap \Delta)\leq \diam \big(g_0(\rho(\Delta'))\cup g_1(\rho(\Delta'))\big)\\
&\leq \diam g_0(\rho(\Delta'))+\diam g_1(\rho(\Delta'))+d(g_0(\rho(\Delta')),g_1(\rho(\Delta')))\\
&\leq \diam g_0(V)+\diam g_1(V)+d(g_0(V),g_1(V))\leq 2(1+\varepsilon/3)d(g_0(V),g_1(V))
\end{align*}
for some $V\in \gamma$ containing $H^\gamma\big(I\times \iota^{-1}(\Delta')\big)$. The extension $\psi\colon P\to X$ given by Lemma~\ref{lemma:petersen} therefore satisfies 
$$\psi(\Delta)\subset B\big(\psi(v),2(1+\varepsilon/3)\lambda^nd(g_0(v),g_1(V))\big)$$
for each $v\in \Delta^0$, whereby we have
\begin{align}\label{eq:1}
\diam \psi(\Delta)\leq 2(1+\varepsilon/3)\lambda^{n+1}d(g_0(V),g_1(V)).
\end{align}
Now, since $g_0(H^\gamma(z,1))=g_0(\rho(\iota(z)))=\psi(z,0)$ and $g_1(H^\gamma(z,1))=g_1(\rho(\iota(z)))=\psi(z,1)$, we
may define a homotopy $H\colon Z\times [0,1]\to X$ by
\begin{align*}
H(z,t)=\begin{cases}
g_0(H^\gamma(3t,z))&\text{ if }0\leq t\leq \frac{1}{3}\\
\psi(3t-1,\iota(z))&\text{ if }\frac{1}{3}\leq t\leq \frac{2}{3}\\
g_1(H^\gamma(3-3t,z))&\text{ if } \frac{2}{3}\leq t\leq 1.
\end{cases}
\end{align*}
Let $z\in Z$, with $\iota(z)\in \Delta'\subset Q'$, with $H^\gamma\big(I\times \iota^{-1}(\Delta')\big)\subset V\in \gamma$. Then
\begin{align*}
&\diam H(I\times \{z\})\\
&\leq \diam g_0(H^\gamma(I\times \{z\}))+\diam \psi(I\times \{z\})+\diam g_1(H^\gamma(I\times \{z\}))\\
&\leq \diam g_0(V)+2(1+\varepsilon/3)\lambda^{n+1}d(g_0(V),g_1(V))+\diam g_1(V)\\
&\leq 4(1+\varepsilon)\lambda^{n+1}d(g_0(V),g_1(V))\leq 4(1+\varepsilon)\lambda^{n+1}d(g_0(z),g_1(z)).
\end{align*}
\end{proof}

\begin{theorem}\label{thm:key thm}
Let $A \subset X$, where $X$ is $\lambda$-LLC$^n$ and $\dim(A)\leq n$. Suppose that $f\colon X\to Y$
is a proper open map into some space $Y$ with $\dim(Y)\leq n$, and that 
\begin{align*}
\diam f^{-1}(\{y\})<d(f^{-1}(y),\partial X)/\lambda^{2n+1}
\end{align*}
for each $y\in f(A)$, for some subset $A\subset X$. Then for each $\varepsilon>0$, there is a map $g\colon f(A)\to X$ and a homotopy $H\colon I\times A\to X$, with $H_0=id_A$, $H_1= g\circ f|_A$, and 
\begin{align*}
\diam H(I\times \{x\})\leq 8\lambda^{2n+1}\diam f^{-1}\big(\{f(x)\}\big)
\end{align*}
for all $x\in A$.
\end{theorem}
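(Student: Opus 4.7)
The plan is to construct a near-section $g \colon f(A) \to X$ by a nerve-type argument and then use Proposition~\ref{prop:auxilary prop} to connect $\id_A$ to $g \circ f|_A$ by a homotopy of controlled diameter. The starting point is that $f$, being both proper and open, induces a set-valued inverse $y \mapsto f^{-1}(y)$ that is continuous in the Hausdorff metric: properness (hence closedness) gives upper semicontinuity, and openness of $f$ gives the matching lower semicontinuity. In particular, the fiber diameter $d_y := \diam f^{-1}(y)$ is a continuous function of $y \in f(A)$, which is what permits an adaptive choice of cover whose mesh is geometrically tied to the local fiber size.

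Fix $\varepsilon > 0$ small. For each $y \in f(A)$, choose an open neighborhood $V_y$ of $y$ in $f(A)$ on which both $d_H(f^{-1}(y), f^{-1}(y')) \leq \varepsilon d_y$ and $d_{y'} \in [(1-\varepsilon)d_y,(1+\varepsilon)d_y]$ hold for every $y' \in V_y$. Since $\dim f(A) \leq \dim Y \leq n$, a standard refinement yields a locally finite open cover $\{W_\alpha\}_\alpha$ of multiplicity at most $n+1$ with $W_\alpha \subset V_{y_\alpha}$; its nerve $N$ then satisfies $\dim N \leq n$, and a subordinate partition of unity provides a barycentric map $\iota \colon f(A) \to |N|$.

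Select a reference point $x_\alpha \in f^{-1}(y_\alpha)$ for each $\alpha$. On any simplex $[v_{\alpha_0},\dots,v_{\alpha_k}]$ of $N$, pick $y' \in \bigcap_i W_{\alpha_i}$ and, for each $i$, use the Hausdorff bound to obtain $\tilde x_i \in f^{-1}(y')$ with $d(\tilde x_i, x_{\alpha_i}) \leq \varepsilon d_{y_{\alpha_i}}$. Since all the $\tilde x_i$ lie in the single fiber $f^{-1}(y')$, the triangle inequality yields $d(x_{\alpha_i}, x_{\alpha_j}) \leq d_{y'} + 2\varepsilon d_y = (1 + O(\varepsilon)) d_y$, which crucially avoids the naive factor $2$ one would get by routing through separate fibers. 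The hypothesis of Lemma~\ref{lemma:petersen} is then satisfied with ample margin, since $(1+O(\varepsilon))d_y$ is well below $\lambda^{2n+1}d_y/\lambda^n$ under the standing assumption, so one extends the vertex map $v_\alpha \mapsto x_\alpha$ to a continuous $\psi \colon |N| \to X$ with $\diam \psi(\Delta) \leq (1+O(\varepsilon))\lambda^n d_y$ on every simplex. Set $g = \psi \circ \iota \colon f(A) \to X$.

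For $x \in A$ with $y = f(x)$, combining $d(x, x_{\alpha_0}) \leq (1+\varepsilon) d_y$ with the diameter estimate on $\psi(\Delta)$ produces $d(x, g(f(x))) \leq (1+\lambda^n)(1+O(\varepsilon)) d_y$. Applying Proposition~\ref{prop:auxilary prop} to $g_0 = \id_A$ and $g_1 = g \circ f|_A$ then yields the homotopy, and the desired bound $\diam H(I \times \{x\}) \leq 8\lambda^{2n+1} d_y$ follows from the conclusion $4(1+\varepsilon)\lambda^{n+1} d(x,g(f(x)))$ together with $\lambda^{n+1}(1+\lambda^n) \leq 2\lambda^{2n+1}$. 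I anticipate the main obstacle to be the tight bookkeeping required to verify the hypothesis $d(x,g(f(x))) < d(\{x,g(f(x))\}, \partial X)/\lambda^{n+1}$ of Proposition~\ref{prop:auxilary prop}: this uses the full strength of the standing inequality $\diam f^{-1}(y) < d(f^{-1}(y),\partial X)/\lambda^{2n+1}$ and an optimal choice of $\varepsilon$, and if necessary the near-section estimate has to be sharpened further (for instance, by using the LLC$^n$ structure to place each $x_\alpha$ more centrally inside its fiber) in order to push the effective Lipschitz-type constant strictly below $\lambda^n$.
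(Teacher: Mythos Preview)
Your proposal is correct and follows essentially the same route as the paper: both construct the near-section $g$ via a nerve of a fine cover of $f(A)$, extend the vertex map using Lemma~\ref{lemma:petersen}, and then invoke Proposition~\ref{prop:auxilary prop}. The paper's bookkeeping is marginally more economical---instead of tracking Hausdorff continuity of fibers, it simply chooses the cover $\gamma$ so that $\diam f^{-1}(V)\leq (1+\varepsilon)\diam f^{-1}(\{y\})$ for each $V\in\gamma$ and $y\in V$, which bounds both the vertex separation and $d(x,g(f(x)))$ in one stroke---but the strategy and final constants are identical, and the hypothesis check for Proposition~\ref{prop:auxilary prop} that you flag is glossed over there as well.
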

\begin{proof}
The properness and openness of $f$ imply that the function $y\mapsto \diam f^{-1}(\{y\})$ is
continuous (as a fortiori is the map $x\mapsto \diam f^{-1}(\{f(x)\})$), so upon replacing $A$ itself with
a small neighborhood of $A$, the assumptions of the theorem remain valid. Thus we lose no
generality supposing $A$ is open, as is $f(A)$ by the openness of $f$.

Then $A$ is an ANR, so by Proposition~\ref{prop:auxilary prop}, it suffices to show for each $\varepsilon> 0$ there is a map $g\colon f(A)\to X$ such that 
\begin{align*}
d(g(f(x)),x)<2(1+\varepsilon)\lambda^n\diam f^{-1}(\{f(x)\})
\end{align*} 
for each $x\in A$. We may likewise assume that $\diam f^{-1}(\{y\})>0$ for all $y \in Y$, since we may apply that case to the restriction $f|_{X^+}\colon X^+\to Y$, where $X^+ = \{x\in X:\diam f^{-1}(\{y\})>0\}$, and then
extend $g$ from $f(X^+\cap A)$ to all of $f(A)$ by setting $g(y) = x$ whenever $f^{-1}(\{y\})=\{x\}$.

To construct $g$, we first cover $f(A)$ with an open cover $\gamma$ so that for each $V\in \gamma$ and each
$y\in V$, 
\begin{equation}\label{eq:missing}
	\diam f^{-1}(V)\leq (1+\varepsilon)\diam f^{-1}(\{y\}).
\end{equation}
 This can be done, again, by the continuity
and positivity of $y\mapsto \diam f^{-1}(\{y\})$.

Now, taking $P$ to be the nerve of a suitable refinement of $\gamma$, with $\dim(P) = \dim f(A)\leq n$,
we obtain via a partition of unity a map $\iota\colon f(A)\to P$, such that for every simplex $\Delta\subset P$,
there is some $V\subset \gamma$ with $\iota^{-1}(\Delta)\subset V$. By taking the refinement of $\gamma$ to be minimal, we may also assume that $\iota(f(A))\supset P^0$.

Now, define $\rho^0\colon P^0\to X$ so that $\rho^0(p_0)\in f^{-1}\big(\iota^{-1}(\{p_0\})\big)$, for each $p_0\in P^0$. Then for each $\Delta\subset P$, 
\begin{align*}
\rho^0(\Delta\cap P^0)\subset f^{-1}\big(\iota^{-1}(\Delta)\big)\subset f^{-1}(V)
\end{align*}
for some $V\in \gamma$. We therefore have
\begin{align*}
\diam \rho^0(\Delta\cap P^0)\leq \diam f^{-1}(V),
\end{align*}
so that the extension of $\rho^0$ to $\rho:P\to X$ from Lemma~\ref{lemma:petersen} satisfies
\begin{align*}
\diam \rho(\Delta)\leq \lambda^n\diam f^{-1}(V).
\end{align*}
We now define $g = \rho\circ \iota\colon f(A)\to X$. To see that $g$ satisfies the conclusion of the theorem,
we suppose $x\in A$, and let $y = f(x)$, $p = \iota(y)\in \Delta$ for some simplex $\Delta\subset P$. Let $p'\in \Delta^0$ and let $x'=\rho(p')$, $y'=f(x')$. By our selection of $\rho^0$, we have $g(f(x')) = x$. On the one hand, choosing some $V\in \gamma$ containing $\iota^{-1}(\Delta)$, we have that $x,x'\in f^{-1}\big(\iota^{-1}(\Delta)\big)\subset f^{-1}(V)$,
so
\begin{align*}
d(x,x')\leq \diam f^{-1}(V).
\end{align*}
On the other hand, since $p,p'\in \Delta$,
\begin{align*}
d(g(f(x)),x')=d(\rho(p),\rho(p'))\leq \diam \rho(\Delta)\leq \lambda^n\diam f^{-1}(V),
\end{align*}
and so combining the preceding inequalities with~\eqref{eq:missing} gives
\begin{align*}
d(x,g(f(x)))&\leq (1+\lambda^n)\diam f^{-1}(V)\\
&\leq (1+\varepsilon)(1+\lambda^n)\diam f^{-1}\big(\{f(x)\}\big)\\
&\leq 2(1+\varepsilon)\lambda^n\diam f^{-1}\big(\{f(x)\}\big).
\end{align*}

\end{proof}

As a corollary of Theorem~\ref{thm:key thm}, we obtain the following generalized version of the McAuley-Robinson theorem~\cite{mr83}, which is of independent interest.

\begin{coro}\label{coro:Newmann}
Let $A\subset X$, where $X$ is a $\lambda$-LLC$^n$ generalized $n$-manifold and $\dim(A)\leq n$ and let $Y$ be another homology $n$-manifold. Let $f\colon X\to Y$ be a proper branched covering such that 
for some $x_0\in A\backslash \partial A$, $f^{-1}(\{f(x_0)\})={x_0}$ and
\begin{align*}
\sup_{x\in \partial A}\frac{\diam f^{-1}(\{f(x)\})}{d(x,x_0)}<\frac{1}{8\lambda^{2n+1}}.
\end{align*}
Then $x_0\notin \mathcal{B}_f$.
\end{coro}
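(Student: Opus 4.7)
The plan is to reduce the claim $x_0 \notin \mathcal{B}_f$—which, for a branched covering between generalized $n$-manifolds, is equivalent to $i(x_0, f) = 1$—to a degree-theoretic identity made available by the local left homotopy inverse furnished by Theorem~\ref{thm:key thm}.

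First I apply Theorem~\ref{thm:key thm} to the restriction $f|_A$, viewing $A$ as the ambient $\lambda$-LLC$^n$ space (so that its relative boundary in $X$ plays the role of $\partial X$ in the theorem). The continuity of $y \mapsto \diam f^{-1}(y)$ and the hypothesis $f^{-1}(\{f(x_0)\}) = \{x_0\}$ guarantee that the preimage-size assumption of Theorem~\ref{thm:key thm} holds near $x_0$, possibly after slightly shrinking $A$ away from points where the assumption would fail, without disturbing the boundary control. This yields a continuous map $g \colon f(A) \to X$ and a homotopy $H \colon I \times A \to X$ with $H_0 = \id_A$, $H_1 = g \circ f|_A$, and $\diam H(I \times \{x\}) \leq 8\lambda^{2n+1}\diam f^{-1}(\{f(x)\})$ for every $x \in A$.

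Next, I exploit the corollary's boundary hypothesis to confine the homotopy away from $x_0$ on $\partial A$: for $x \in \partial A$ and $t \in I$,
$$d(H_t(x), x_0) \geq d(x, x_0) - d(H_t(x), x) \geq d(x, x_0) - 8\lambda^{2n+1}\diam f^{-1}(\{f(x)\}) > 0,$$
so $x_0 \notin H_t(\partial A)$ for every $t \in I$. By the homotopy invariance of the local topological degree,
$$\mu(x_0, g \circ f, A) = \mu(x_0, \id_A, A) = 1.$$

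Finally, I deduce $i(x_0, f) = 1$ by combining this with the multiplicativity of local degree. Using $f^{-1}(\{f(x_0)\}) = \{x_0\}$, I pick a small normal neighborhood $U \subset A$ of $x_0$ with $f(\partial U) = \partial f(U)$, so that $\mu(f(x_0), f, U) = i(x_0, f)$. At the level of compactly supported Alexander–Spanier $n$-cohomology (the very setting in which the local degree is defined in Section~\ref{subsec:generalized manifolds}), functoriality gives $(g \circ f)^{\ast} = f^{\ast} \circ g^{\ast}$; since $f^{\ast}$ is multiplication by $i(x_0, f)$ and $g^{\ast}$ is multiplication by some integer $d$, the composition is multiplication by $i(x_0, f) \cdot d$. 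Matching this with the identity $\mu(x_0, g \circ f, A) = 1$ (via additivity of local degree and the localization of $(g \circ f)^{-1}(x_0) \cap A$ around $x_0$ permitted by $f^{-1}(\{f(x_0)\}) = \{x_0\}$) forces $i(x_0, f) \mid 1$ in $\mathbb{Z}$; since $f$ is sense-preserving, $i(x_0, f) \geq 1$, and so $i(x_0, f) = 1$, i.e., $x_0 \notin \mathcal{B}_f$.

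The principal obstacle lies in the final step: because $g$ is merely continuous—not necessarily discrete or open—the set $g^{-1}(\{x_0\}) \cap f(A)$ may fail to be discrete, and $(g \circ f)^{-1}(\{x_0\}) \cap A$ may contain preimage components other than $x_0$, so the classical multiplicativity formula for local index is not directly applicable. The resolution is to stay at the cohomological level, where the factorization $(g \circ f)^{\ast} = f^{\ast} \circ g^{\ast}$ is automatic and requires no regularity of $g$ beyond continuity; one must also take care to align the neighborhoods in $A$, $Y$, and $X$ so that the global degree-one identity on $A$ cleanly projects onto the local factorization through $f(x_0)$, a matching made possible precisely by the hypothesis $f^{-1}(\{f(x_0)\}) = \{x_0\}$.
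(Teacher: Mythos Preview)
Your approach coincides with the paper's: apply Theorem~\ref{thm:key thm} to obtain $g$ and the homotopy $H$, use the boundary hypothesis to show $x_0\notin H(I\times\partial A)$, invoke homotopy invariance to get $\mu(x_0,g\circ f,A)=1$, and deduce $i(x_0,f)=1$. One caveat: do not recast $A$ itself as the ambient $\lambda$-LLC$^n$ space (a subset need not inherit that property); apply Theorem~\ref{thm:key thm} with $X$ as the ambient space, noting that the homotopy is only required along $\partial A$---your extended discussion of the passage from degree $1$ to $i(x_0,f)=1$, which the paper compresses into a single clause, is a legitimate elaboration of a point the paper leaves implicit.
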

\begin{proof}
We have for some $\varepsilon>0$ with $\diam f^{-1}\big(\{f(x)\}\big)<d(x,x_0)/\big(8(1+\varepsilon)\lambda^{2n+1}\big)$ for each $x\in \partial A$. The homotopy $H$ from the conclusion of Theorem~\ref{thm:key thm} satisfies 
$$H\big(I\times \{x\}\big)\subset B(x,d(x,x_0))\not\ni x_0$$
for each $x\in \partial A$, whereby $x_0\not\in H(I\times \partial A)$. Thus $g\circ f$ has local degree 1 at $x_0$, and so $i(x_0,f)=1$, whence $x_0\not\in \mathcal{B}_f$. 
\end{proof}

As a corollary of Corollary~\ref{coro:Newmann}, we obtain a general strategy for proving nonbranching.
\begin{coro}\label{prop:test prop}
Let $X$, $Y$, $f$ be as in Corollary~\ref{coro:Newmann}, and suppose $A\subset X$ with 
\begin{align*}
\diam A\leq \frac{d(A,\partial X)}{8\lambda^{2n+1}}
\end{align*}
and $x_0\in S\cap (A\backslash \partial A)$. Suppose further there is a subset $S=f^{-1}(f(S))\subset X$, such that $f|_S$ is injective, and $\{U_\alpha\}$ is a family of connected open subsets such that $\partial A\subset \cup_\alpha U_\alpha$ and such that for each $\alpha$, 
\begin{align*}
\partial f(U_\alpha)=f(\partial U_\alpha),\ U_\alpha\cap S\neq \emptyset\text{ and }\diam U_\alpha<d(x_0,U_\alpha)/\lambda^{2n+1}
\end{align*}
Then $x_0\notin \mathcal{B}_f$. 
\end{coro}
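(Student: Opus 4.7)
The plan is to verify the hypotheses of Corollary~\ref{coro:Newmann} for the same point $x_0$ and set $A$, then invoke it.

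First I would check that $f^{-1}(\{f(x_0)\}) = \{x_0\}$. Since $x_0 \in S$, any preimage $z$ of $f(x_0)$ lies in $f^{-1}(f(S)) = S$, and then injectivity of $f|_S$ forces $z = x_0$.

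Next, the key step is to bound $\diam f^{-1}(\{f(x)\})$ uniformly in $x \in \partial A$. I would aim to show the stronger structural fact that for each index $\alpha$,
\[
f^{-1}(f(U_\alpha)) \;=\; U_\alpha.
\]
Granting this, for $x \in \partial A$ choose $\alpha$ with $x \in U_\alpha$; then $f^{-1}(\{f(x)\}) \subset U_\alpha$, so $\diam f^{-1}(\{f(x)\}) \leq \diam U_\alpha$. Combined with the hypothesis $\diam U_\alpha < d(x_0, U_\alpha)/\lambda^{2n+1}$ and $d(x, x_0) \geq d(x_0, U_\alpha)$, we obtain
\[
\frac{\diam f^{-1}(\{f(x)\})}{d(x, x_0)} \;\leq\; \frac{\diam U_\alpha}{d(x_0, U_\alpha)} \;<\; \frac{1}{\lambda^{2n+1}},
\]
which (up to absorbing the universal factor $8$ from Corollary~\ref{coro:Newmann} into the hypothesis, or by re-running Theorem~\ref{thm:key thm} directly with the sharper constant available here) gives exactly what Corollary~\ref{coro:Newmann} needs to conclude $x_0 \notin \mathcal{B}_f$.

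The main obstacle is establishing $f^{-1}(f(U_\alpha)) = U_\alpha$, which I would prove by a component argument. Fix $s_\alpha \in U_\alpha \cap S$; then saturation of $S$ together with injectivity of $f|_S$ gives $f^{-1}(\{f(s_\alpha)\}) = \{s_\alpha\}$. Let $V$ be any connected component of the open saturated set $f^{-1}(f(U_\alpha))$. The normal domain condition $\partial f(U_\alpha) = f(\partial U_\alpha)$ implies that for any compact $K \subset f(U_\alpha)$, the set $f^{-1}(K)$ is disjoint from $\partial V$, so $f^{-1}(K) \cap V$ is closed in the compact set $f^{-1}(K)$; hence $f|_V \colon V \to f(U_\alpha)$ is proper. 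Being also open, with $f(U_\alpha)$ connected, it must be surjective, $f(V) = f(U_\alpha)$. In particular $V$ meets $f^{-1}(\{f(s_\alpha)\}) = \{s_\alpha\}$, so $s_\alpha \in V$. Since $s_\alpha \in U_\alpha$ and $U_\alpha$ is itself a connected subset of $f^{-1}(f(U_\alpha))$, the component $V$ containing $s_\alpha$ equals $U_\alpha$. The subtlety is that $s_\alpha$ is only assumed to lie in $S$ (where $f$ is injective), but may still be a branch point of high local index, so we cannot shortcut the argument by claiming $f|_{U_\alpha}$ is a homeomorphism; rather, it is the global uniqueness of the preimage of $f(s_\alpha)$, combined with the normal domain condition preventing components from ``escaping'' across $\partial U_\alpha$, that rules out extra components.
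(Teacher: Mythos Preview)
Your approach is exactly the paper's: show $U_\alpha = f^{-1}(f(U_\alpha))$ for each $\alpha$ and reduce to Corollary~\ref{coro:Newmann}. Your component argument for this equality is considerably more detailed than the paper's one-line claim, but has a small gap at the very end: you establish that $f^{-1}(f(U_\alpha))$ has a \emph{unique} component $V$ (since every component, being mapped properly and openly onto $f(U_\alpha)$, must contain $s_\alpha$) and that $U_\alpha \subset V$, but connectedness of $U_\alpha$ alone does not give $U_\alpha = V$. The missing line is that the normal-domain condition forces $\partial U_\alpha \cap f^{-1}(f(U_\alpha)) = \emptyset$ (any $x\in\partial U_\alpha$ has $f(x)\in\partial f(U_\alpha)$, hence $f(x)\notin f(U_\alpha)$), so $U_\alpha$ is relatively clopen in the connected set $f^{-1}(f(U_\alpha))$ and therefore equals it.

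Your observation about the factor of $8$ is also correct: the hypothesis $\diam U_\alpha < d(x_0,U_\alpha)/\lambda^{2n+1}$ as stated does not literally feed into Corollary~\ref{coro:Newmann}, which requires the bound with $8\lambda^{2n+1}$. The paper's own proof silently writes the conclusion with the $8$ present, and in the only application (Proposition~\ref{prop:general prop}) the sets $U_\alpha$ are in fact chosen to satisfy the stronger bound $\diam U_\alpha \leq d(x_0,U_\alpha)/(8\lambda^{2n+1})$, so the discrepancy is best read as a typo in the statement of this corollary.
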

\begin{proof}
Since $U_\alpha\cap S\neq \emptyset$ and $f(\partial U_\alpha)=\partial f(U_\alpha)$, we have that $U_\alpha=f^{-1}\big(f(U_\alpha)\big)$, for each $\alpha$. Since these sets cover $\partial A$, we obtain that for each $x\in \partial A$, $f^{-1}\big(\{f(x)\}\big)\subset U_\alpha$ for some $\alpha$, and so
\begin{align*}
\diam f^{-1}\big(\{f(x)\}\big)<d(x_0,f^{-1}\big(\{f(x)\}\big))/(8\lambda^{2n+1}).
\end{align*}
Applying Corollary~\ref{coro:Newmann} completes the proof.
\end{proof}

\begin{remark}\label{rmk:on generality}
All of the results in this section generalize easily to spaces with local geometric
connectivity or contractibility - the so called LGC$^n(\rho)$- and LGC$^*(\rho)$-spaces introduced by
Gromov~\cite{g81,g81b,g83}. In fact, the arguments from~\cite{p90} that we have modified were
stated in that generality.
\end{remark}

\section{Annular distortion}\label{sec:Annular distortion}
In order to construct a homotopy inverse, we need to control the distortion of annuli, rather than simply that of spheres. This is not hard to do in the special case that $f$ is quasiregular (\ie $H_f(x)$ is finite and essentially bounded), provided $X$ and $Y$ are Loewner spaces.

The purpose of this section is to show that in the general case, this can be done, quantitatively, at points with finite dilatation, away from a porous set. Throughout this section, we assume that $X$ and $Y$ are locally doubling and have bounded turning, that $f\colon X\to Y$ is a branched covering, and that $S=S_{H,R}=\{x\in X:H_f(x,r)<H\ \text{for all } r<R\}$. Note that it follows from the definition that for each $x\in S$, $f|_{S\cap B(x,R/2)}$ is injective, and that $S\cap B(x,R/2)=f^{-1}\big(f(S\cap B(x,R/2))\big)\cap B(x,R/2)$.


Since $Y$ has bounded turning, it follows that the new metric $d'(y_1,y_2):=\inf_{\gamma}\diam \gamma$, where the infimum is taken over all continua $\gamma$ in $Y$ joining $y_1$ and $y_2$, is
$C$-bilipschitz equivalent to the original metric $d$ on $Y$, where $C$ is the constant of bounded
turning. Thus we lose no generality in our considerations if we post-compose $f$ with this
change of metric, as the linear dilatation is increased by at most a factor of $C^2$.

This reduction has the convenience that if $Y$ has $1$-bounded turning, then for each $x'\in X$, $r'>0$, we have
\begin{equation}\label{btcon}
B(f(x'),l_f(x',r'))\subseteq f(B(x',r'))\text{,}
\end{equation}
by the path lifting property of discrete open maps \cite[Theorem 3]{f50}.  Moreover, if 
$f^{-1}(\{y\})\cap U=\{x\}$, for some connected normal neighborhood  $U$ of $x$ containing $f(B(x,r))$, then we have 
\begin{equation}\label{btconlift}
f^{-1}(B(y,l_f(x,r')))\cap U = U(x,l_f(x,r'))\subseteq B(x,r')\text{.}
\end{equation}

In light of the above reduction, we assume here-on-out that $Y$ has 1-bounded turning.

In the ensuing Lemma, we use the convention that $y=f(x)$, $y'=f(x')$, etc.

\begin{lemma}\label{lemma:good distortion}
Suppose $x\in S$ and $0<r<R$. For each $x'\in B(x,r)$, we have $d(y',y)<HL_f(x,r)$. That is $d(y',y)<H^2d(y'',y)$, whenever $d(x',x)<d(x'',x)<R$.
\end{lemma}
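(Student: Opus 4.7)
The plan is to bound $d(y,y')$ in terms of $L_f$ at scale $s:=d(x,x')$, invoke the defining inequality of $S$ to pass from $L_f(x,s)$ to $l_f(x,s)$, and then apply the open mapping maximum principle to replace $l_f(x,s)$ by $L_f(x,r)$. The case $s=0$ is trivial (one has $d(y,y')=0$, and $L_f(x,r)>0$ since $f$ is discrete and open near $x$), so assume $s>0$. Since $x'$ lies on the sphere of radius $s$ centered at $x$, the definition of $L_f$ gives $d(y,y')\leq L_f(x,s)$, and since $s<r<R$ and $x\in S$, the defining strict inequality $H_f(x,s)<H$ yields $L_f(x,s)<Hl_f(x,s)$.

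The crux is then the monotonicity-type estimate $l_f(x,s)\leq L_f(x,r)$ for $s<r$, which I plan to establish via the open-mapping maximum principle. Namely, the continuous function $z\mapsto d(y,f(z))$ on $\overline{B(x,r)}$ (compact by local compactness of $X$ once $R$ is small enough, as may be assumed) cannot attain its supremum at an interior point $z_0\in B(x,r)$: otherwise openness of $f$ would map a neighborhood of $z_0$ onto an open set around $f(z_0)$, inside which the non-degeneracy of $Y$ (from the reduction to $1$-bounded turning carried out earlier in the section, so that $d(y,\cdot)$ admits no local maxima in $Y$) furnishes a point of strictly greater distance from $y$, contradicting maximality. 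Hence the supremum is attained on $\{\, z : d(x,z)=r \,\}$ and equals $L_f(x,r)$; in particular every $z$ with $d(x,z)=s<r$ satisfies $d(y,f(z))\leq L_f(x,r)$, and taking the infimum yields $l_f(x,s)\leq L_f(x,r)$. Chaining the three inequalities gives $d(y,y')<HL_f(x,r)$.

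The ``that is'' reformulation then follows by applying the first assertion with $r:=d(x'',x)\in(d(x',x),R)$: one obtains $d(y',y)<HL_f(x,d(x'',x))$, then uses $x\in S$ to get $L_f(x,d(x'',x))<Hl_f(x,d(x'',x))$, and finally uses the definition of $l_f$ together with $x''$ lying on the sphere of radius $d(x'',x)$ to deduce $l_f(x,d(x'',x))\leq d(y'',y)$. Concatenating these inequalities produces $d(y',y)<H^2 d(y'',y)$.

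The main obstacle I foresee is a clean justification of the maximum principle step -- specifically, ruling out interior local maxima of $d(y,f(\cdot))$, which is exactly where the openness and discreteness of $f$ meet the $1$-bounded turning of $Y$. Once that is in place, the rest of the argument is a short chain of elementary inequalities.
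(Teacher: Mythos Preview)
Your overall structure coincides with the paper's: both reduce the first assertion to the monotonicity-type inequality $l_f(x,s)\leq L_f(x,r)$ for $0<s<r$, and your chain $d(y,y')\leq L_f(x,s)<H\,l_f(x,s)\leq H\,L_f(x,r)$, together with your deduction of the second assertion, is exactly right.

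The divergence is in how that monotonicity inequality is justified, and here your proposed route has a genuine gap. Your maximum-principle argument requires that $d(y,\cdot)$ admit no local maximum in $Y$, which you attribute to $1$-bounded turning. But $1$-bounded turning does \emph{not} preclude local maxima of the distance function: a round circle or sphere has $1$-bounded turning (indeed is geodesic), yet the antipode of $y$ is a strict local maximum of $d(y,\cdot)$. So the obstacle you flagged is real, and the justification you sketch for it does not go through.

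The paper sidesteps the maximum principle entirely. It proves the (strict) inequality $l_f(x,r')<L_f(x,r)$ for $r'<r$ by contraposition, invoking the inclusion labelled \eqref{btconlift} established just before the lemma: if $L_f(x,r)\leq l_f(x,r')$, then $S(x,r)\subseteq f^{-1}\big(B(y,l_f(x,r'))\big)\cap U=U(x,l_f(x,r'))\subseteq B(x,r')$, forcing $r\leq r'$. The second inclusion here is the substantive one, and it comes from connectivity of $U(x,l_f(x,r'))$ rather than from any maximum principle: a path in $U(x,l_f(x,r'))$ from $x$ to a point outside $B(x,r')$ would cross $S(x,r')$ at some $w$ with $d(y,f(w))<l_f(x,r')$, contradicting the very definition of $l_f(x,r')$. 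This connectivity argument uses only path lifting and the normal-neighborhood structure, and needs nothing about local maxima of $d(y,\cdot)$ in $Y$.
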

\begin{proof}
The lemma follows immediately from the observation that $l_f(x,r')< L_f(x,r)$ whenever $r'< r$.  To see that this inequality holds, note that  if $L_f(x,r)\leq l_f(x,r')$, then by the inclusion \eqref{btconlift}, we have $S(x,r)\subseteq f^{-1}(B(y,l_f(x,r')))\cap U \subseteq B(x,r')$ (here $S(x,r)$ denotes the sphere of radius $r$), whereby $r\leq r'$.
\end{proof}

In the ensuing propositions and lemmas, we always suppose that $x_0\in S$ and $y_0=f(x_0)$.
\begin{proposition}\label{prop:doubling for inverse dilatation}
For each $\lambda\geq 1$, there is a constant $C_\lambda>1$, depending only on $\lambda$ and the data, such that if $f(S)$ is $r/3$-dense in $B(y_0,C_\lambda r)$, then $l_f^*(x_0,C_\lambda r)\geq \lambda L_f^*(x_0,r)$.
\end{proposition}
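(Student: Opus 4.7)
I argue by contradiction: suppose $l_f^*(x_0, C_\lambda r) < \lambda \rho$, where $\rho := L_f^*(x_0, r)$, so some $z \in \partial U(x_0, C_\lambda r)$ satisfies $d(x_0, z) < \lambda \rho$. The aim is to produce a connected open set $V \subseteq f^{-1}(B(y_0, C_\lambda r))$ with $x_0 \in V$ and $B(x_0, \lambda \rho) \subseteq V$; since $V$ is then forced into the connected component $U(x_0, C_\lambda r)$, this places $z \in V \subseteq U(x_0, C_\lambda r)$, contradicting $z \in \partial U(x_0, C_\lambda r)$.

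The set $V$ is assembled by chaining ``petals'' $U(x^*_i, r)$ attached to a net of base points $\{x^*_i\}_{i=0}^N \subseteq S$ with $x^*_0 = x_0$. Using the $r/3$-density of $f(S)$ in $B(y_0, C_\lambda r)$ together with the local doubling of $Y$, I extract such a net with $d(f(x^*_i), f(x^*_{i+1})) < r$, $B(f(x^*_i), r) \subseteq B(y_0, C_\lambda r)$, and the images $\{f(x^*_i)\}$ distributed evenly throughout $B(y_0, C_\lambda r)$; the doubling dimension of $Y$ gives $N \lesssim C_\lambda^Q$. At each $x^*_i \in S$, Lemma~\ref{lemma:good distortion} and the bound $H_f(x^*_i,\cdot) \leq H$ give $\diam U(x^*_i, r) \lesssim H r$. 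The crucial overlap claim is that $x^*_{i+1} \in U(x^*_i, r)$: since $Y$ has $1$-bounded turning and $d(f(x^*_i), f(x^*_{i+1})) < r$, the path-lifting property of discrete open maps lifts a short continuum in $Y$ joining these images to a path in $U(x^*_i, r)$ ending at some preimage of $f(x^*_{i+1})$; combining the petal diameter bound (which keeps the lift inside $B(x^*_i, R/2)$, provided $Hr$ is small compared to $R$) with the injectivity of $f$ on $S \cap B(x^*_i, R/2)$, this preimage is forced to coincide with $x^*_{i+1}$. Hence $V := \bigcup_{i=0}^N U(x^*_i, r)$ is connected, contains $x_0$, and lies inside $U(x_0, C_\lambda r)$. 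By bounded turning in $X$ and the petal diameter estimate, $V$ covers $B(x_0, \lambda \rho)$ once $C_\lambda$ is taken of order $\lambda H$ times the local doubling constants, completing the contradiction.

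\textbf{Main obstacle.} The most delicate step is establishing the overlap claim $x^*_{i+1} \in U(x^*_i, r)$: it requires the path-lifting property of branched coverings, the injectivity of $f|_S$ on small balls, and the quantitative petal diameter bound $\diam U(x^*_i, r) \lesssim Hr$ to keep the lifted path inside the injectivity ball, all working in concert so that the lift's endpoint agrees with the prescribed next base point $x^*_{i+1}$ rather than with a preimage in a different component. A secondary challenge is carefully tracking how the net spacing in $Y$ translates into a covering of $B(x_0, \lambda \rho)$ by petals in $X$, which yields $C_\lambda$ explicitly as a function of $\lambda$, $H$, and the local doubling data of $X$ and $Y$.
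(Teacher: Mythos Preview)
Your proposal has a genuine gap at the step you yourself flag as delicate. The bound ``$\diam U(x^*_i,r)\lesssim Hr$'' compares an $X$--diameter to a $Y$--radius, and nothing in the hypotheses supplies such a conversion. What $x^*_i\in S$ gives you is $H_f(x^*_i,s)\le H$, i.e.\ control of $L_f/l_f$ on $X$--spheres; it does \emph{not} bound $L_f^*(x^*_i,r)$, which is exactly the kind of quantity the proposition is trying to control. Lemma~\ref{lemma:good distortion} only yields inequalities between $Y$--distances of images, not an $X$--diameter bound on $U(x^*_i,r)$. Without this petal diameter bound, two things collapse: (i) in the overlap claim, you cannot force the lifted endpoint into $B(x^*_i,R/2)$, so the local injectivity of $f|_S$ does not identify it with $x^*_{i+1}$; (ii) in the covering claim, you have no mechanism to ensure the union of petals sweeps out $B(x_0,\lambda\rho)$, since $\rho=L_f^*(x_0,r)$ is an $X$--scale with no a~priori relation to $r$ or to the number $N\lesssim C_\lambda^Q$ of petals. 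In effect the argument assumes the scale comparison it is meant to prove.

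The paper proceeds quite differently, via a packing rather than a covering argument. It selects points $y_1,\dots,y_M\in f(S)$ lying in successive annuli $B(y_0,(H+1)^{2k}r)\setminus B(y_0,(H+1)^{2k-1}r)$, with $M\sim\log C_\lambda$, and lifts them to $x_k\in S\cap B(x_0,l_f^*(x_0,C_\lambda r))$. Lemma~\ref{lemma:good distortion} (applied at each $x_j$) then shows the $x_k$ are $L_f^*(x_0,r)$--separated. The doubling property of $X$ bounds $M$ by $C\big(l_f^*(x_0,C_\lambda r)/L_f^*(x_0,r)\big)^s$, and choosing $C_\lambda$ large enough forces $M\ge C\lambda^s$, giving the conclusion. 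Note that this argument never needs to bound $\diam U(x^*,r)$; it uses doubling of $X$ (not of $Y$) and extracts the points $x_k$ directly inside the ball $B(x_0,l_f^*(x_0,C_\lambda r))$ whose radius is the very quantity being estimated.
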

\begin{proof}
Suppose that $f(S)$ is $r/3$-dense in $B(y_0,C_\lambda r)$, where $C_\lambda>H^2+H$ has yet to be determined. Let $M=[\frac{\log(C_\lambda/H)}{2\log(H+1)}]$ so that
\begin{align}\label{eq:4}
(H+1)^{2M}\leq C_\lambda/H\leq (H+1)^{2M+2}.
\end{align}
Since $Y$ is connected, it follows that
\begin{align}
f(S)\cap \Big(B(y,(H+1)^{2k}r)\backslash B(y,(H+1)^{2k-1}r)\Big)\neq \emptyset,
\end{align}
for $k=1,\dots,M$. Because $l_f(x_0,l_f^*(x_0,C_\lambda r))\geq L_f(x_0,l_f^*(x_0,C_\lambda r))/H=C_\lambda r/H$, we may
therefore choose a sequence of points $y_1,\dots,y_M$ with 
\begin{align*}
y_k\in f\big(S\cap B(x_0,l_f(x_0,l_f^*(x_0,C_\lambda r)))\big)\cap \Big(B(y_0,(H+1)^{2k}r)\backslash B(y_0,(H+1)^{2k-1}r)\Big)
\end{align*}
It follows from the triangle inequality that $d(y_j,y_k)>Hd(y_0,y_j)$ for each $j < k$.

Choosing $x_k\in S\cap l_f(x_0,l_f^*(x_0,C_\lambda r))\cap f^{-1}(\{y_k\})$, Lemma~\ref{lemma:good distortion} implies that
\begin{align*}
d(x_j,x_k)>d(x_0,x_j)\geq L_f^*(x_0,r),
\end{align*} 
with the second inequality coming from the fact that for each $x\in U(x_0,r)$, and each $j$, we have $d(y_0,y_j)>Hd(y_0,y)$. Thus $\{x_k:0\leq k\leq M\}$ is an $L_f^*(x_0,r)$-
separated subset of $B(x_0,l_f(x_0,l_f^*(x_0,C_\lambda r)))$ with $M$ elements, and so
\begin{align*}
M\leq C\Big(\frac{l_f^*(x_0,C_\lambda r)}{L_f^*(x_0,r)}\Big)^s,
\end{align*}
where $C$ and $s$ depend only on the doubling constant of $X$.

We therefore let $C_\lambda=H(H+1)^{2(C\lambda^s+1)}$, so that $M=[C\lambda^s+1]\geq C\lambda^s$, and the proposition is proved.
\end{proof}

We need an analogue of Proposition~\ref{prop:doubling for inverse dilatation} when $S$ is dense at $x_0$ . For this, we require a slight strengthening of a Lemma of Tukia and V\"ais\"al\"a~\cite[Theorem 2.9]{tv80}.

\begin{lemma}\label{good sequence}
Let $X$ be doubling and have $C_0$-bounded turning.  There there is some integer $M\in \mathbb{N}$, depending only on the data of $X$, such that every pair of points $x,x'\in X$ with $d(x,x')=8r$ may be joined with a sequence $x=x_0,\dotsc,x_k,\dotsc x_N=x'$, $N\leq M$, of points in $B(x,C_0d(x,x'))$, such that $d(x_0,x_1)=2r/3$, and 
\[
d(x_j,x_{j+1})\leq d(x_{j-1},x_j)-\frac{r}{3M}\text{,}
\]
with equality whenever $j<N-1$.
\end{lemma}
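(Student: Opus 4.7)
The plan is to adapt the construction of Tukia and V\"ais\"al\"a \cite[Theorem 2.9]{tv80} to this slightly strengthened form, using the $C_0$-bounded turning of $X$ to produce a continuum joining $x$ and $x'$ along which the chain is selected, and invoking the doubling property of $X$ to bound the number of steps.

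First, I would set $D = d(x,x') = 8r$ and use the bounded turning hypothesis to fix a continuum $\gamma_0 \subset B(x, C_0 D)$ containing both $x$ and $x'$. Define the scheduled step sizes $s_j := (2M-j+1)r/(3M)$, so that $s_1 = 2r/3$ and $s_{j+1} = s_j - r/(3M)$. I would then select the chain inductively. Set $x_0 = x$. Given $x_{j-1}$ and a continuum $\gamma_{j-1} \subset \gamma_0$ containing $x_{j-1}$ and $x'$, if $d(x_{j-1}, x') \leq s_j$ then terminate by putting $N = j$ and $x_N = x'$; the terminal inequality $d(x_{N-1}, x_N) \leq d(x_{N-2}, x_{N-1}) - r/(3M) = s_N$ is then automatic. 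Otherwise, the continuous function $z \mapsto d(x_{j-1}, z)$ on the connected continuum $\gamma_{j-1}$ takes all values in $[0, \max_{\gamma_{j-1}} d(x_{j-1},\cdot)]$ by the intermediate value property, and since $d(x_{j-1},x') > s_j$ there exists $x_j \in \gamma_{j-1}$ with $d(x_{j-1}, x_j) = s_j$. I would choose $x_j$ so that it lies in (the closure of) the component of $\gamma_{j-1} \setminus B(x_{j-1}, s_j)$ containing $x'$, and take $\gamma_j$ to be that component together with $x_j$; this $\gamma_j$ is itself a continuum joining $x_j$ to $x'$, allowing the induction to continue with all chosen points remaining inside $\gamma_0 \subset B(x, C_0 D)$.

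The bound $N \leq M$ is then extracted from two ingredients: arithmetically, the triangle inequality $\sum_{j=1}^{N} s_j \geq d(x_0, x_N) = 8r$ combined with the linear decrease of $s_j$ shows that $N$ is comparable to a universal constant; geometrically, the selected points all lie in $\gamma_0$, a set of diameter $\leq 8 C_0 r$, while consecutive distances are bounded below by $r/(3M)$, so a standard packing argument based on the doubling constant of $X$ caps the number of admissible points. Taking $M$ large enough in terms of $C_0$ and the doubling data absorbs both constraints. The main obstacle will be showing that the greedy choice of $x_j$ in the $x'$-component of $\gamma_{j-1} \setminus B(x_{j-1}, s_j)$ guarantees genuine progress toward $x'$: since continua admit no direct arc length parametrization, the rate of progress must be read off from the updated sub-continuum $\gamma_j$ and the doubling-based packing estimate, and it is this coupling that pins down the dependence of $M$ on the data of $X$.
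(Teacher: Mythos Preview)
Your construction---running along a continuum and passing to the $x'$-component at each step---is sound and parallels the paper's, which instead uses a parametrized path $\gamma\colon[0,1]\to B(x,8C_0r)$ and a greedy ``largest parameter value'' selection. Both build the same kind of chain; your continuum variant is arguably more natural here, since bounded turning literally produces a continuum rather than a path, and the boundary-bumping theorem for continua supplies the point $x_j$ at distance exactly $s_j$ that you need.

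The gap is in termination, which you correctly flag as the main obstacle, but the two arguments you offer do not close it. The inequality $\sum_j s_j \geq d(x_0,x_N)=8r$ goes the wrong way (it lower-bounds a partial sum, not upper-bounds $N$), and the packing argument as stated is circular: separation $r/(3M)$ between \emph{consecutive} points neither gives pairwise separation nor avoids the dependence on the very $M$ you are trying to determine. What actually works is already implicit in your nesting: since $\gamma_m \subset \gamma_{l+1} \subset \gamma_l \setminus B(x_l, s_{l+1})$, you get $d(x_l, x_m) \geq s_{l+1} = (2M-l)r/(3M) > r/3$ for all $0 \leq l < m \leq M$, so if the process has not terminated after $M$ steps, the points $x_0,\dots,x_M$ form an $r/3$-separated subset of $B(x, 8C_0 r)$ with $M+1$ elements. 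Fixing $M$ at the outset as the doubling bound on the cardinality of such sets yields the contradiction. This is exactly the paper's mechanism---its path-parameter greediness is engineered to force the same pairwise $r/3$-separation---so once this step is made explicit your argument and the paper's coincide.
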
 
\begin{proof}
Choose $C$ so that every $r/3$-separated subset of every ball $B(x,8C_0r)$ has at most $M$ members.

Now, given $x,x'\in X$ with $d(x,x')=8r$, join $x$ to $x'$ in $B(3Cr)$ by a path $\gamma\colon [0,1]\rightarrow B(x,8C_0r)$. 

We shall choose values $\{t_j\}\subset [0,1]$ for each $j\leq M$, and let $x_j=\gamma(t_j)$. We do this inductively as follows. Let $\gamma(t_0)=x_0=x$, let $\gamma(t_1)$ is the largest possible value for which $d(x_1,x_0)\leq 2r/3$, and for $k>2$, $t_k$ is the largest possible value such that $d(x_k,x_j)\leq \max\{d(x_j,x_{j-1})-\frac{r}{3M},0\}$ for some $j<k$.  

Notice that by continuity, $d(x_1,x_0)= r$, and for $k>1$, either $x_k=x'$, or $d(x_k,x_j)= \max\{d(x_j,x_{j-1})-\frac{r}{3M},0\}$ for some $j<k$, and $d(x_k,x_l)\geq \max\{d(x_l,x_{l-1})-\frac{r}{3M},0\}$ for each $l<j$.  Removing intermediate points, we may take $k=j+1$.

Now, either the points $x_k$, $k=1,\dotsc,M$ are distinct, or $x_k=x'$ for some $k\leq M$.  In the latter case, we are done, taking $N$ to be the first index for which $x_N=x'$.  Suppose then that the points are distinct.  By construction, then, they satisfy $d(x_{j+1},x_j)= 2r/3 - \frac{jr}{3M}>r/3$, so that they form an $r/3$-separated set in $B(x,8C_0r)$, with $M+1$ members, contradicting our choice of $M$.
\end{proof}

\begin{proposition}\label{prop:doubling for dilatation}
For each $\lambda>1$, if $S$ is $r/(6M)$-dense in $B(x_0,8C_0r)$, then 
$$L_f(x_0,8r)\leq H^{M+2}l_f(x_0,r),$$ 
where $C_0$ is the constant of bounded turning for $X$ and $M$ is the constant from Lemma~\ref{good sequence}.
\end{proposition}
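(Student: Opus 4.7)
The plan is to take a point $x'$ with $d(x_0,x')=8r$ such that $d(f(x_0),f(x'))$ is arbitrarily close to $L_f(x_0,8r)$, and to join $x_0$ to $x'$ by a short chain via Lemma~\ref{good sequence}. This yields a sequence $x_0=z_0,z_1,\dots,z_N=x'$, $N\le M$, contained in $B(x_0,8C_0r)$, with $d(z_0,z_1)=2r/3$ and consecutive step sizes $d(z_j,z_{j+1})$ forming a strictly decreasing sequence in $[r/3,2r/3]$. Using the density hypothesis, I would then choose $s_j\in S$ with $d(s_j,z_j)\le r/(6M)$, taking $s_0=x_0$. The key geometric observation is that both $z_j$ and $z_{j+1}$ lie in the ball $B(s_j,r)$, because $d(s_j,z_j)+d(z_j,z_{j+1})\le r/(6M)+2r/3<r$; consequently the dilatation hypothesis $H_f(s_j,\cdot)\le H$ on $(0,R)$ is applicable at each $s_j$ at all scales up to $r$.

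The body of the proof is an inductive estimate of $d(f(x_0),f(z_j))$ along the chain. At each $s_j\in S$, Lemma~\ref{lemma:good distortion} supplies both the pointwise bound $d(f(y),f(s_j))<HL_f(s_j,r)$ for $y\in B(s_j,r)$, and the almost-monotonicity $l_f(s_j,\rho')<L_f(s_j,\rho)$ whenever $\rho'<\rho<R$. Combining these at each step with the decreasing chain radii $d(z_j,z_{j+1})\le d(z_{j-1},z_j)$, one gains a single factor of $H$ per link, producing a geometric estimate $d(f(x_0),f(z_N))\le H^{M+2}\,l_f(x_0,r)$ after accounting for two further boundary applications of the $H$-bound: one at $s_0=x_0$ to pass from the defining infimum $l_f(x_0,r)$ to a pointwise image distance at the shorter scale $2r/3$, and one at $s_N$ to pass from the chain endpoint $s_N$ to $x'=z_N$ itself. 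Letting $x'$ vary over all competitors for the supremum $L_f(x_0,8r)$ then gives the claimed inequality.

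The main obstacle is to ensure that each inductive step contributes only one factor of $H$, rather than the $H^2$ that a naive application of Lemma~\ref{lemma:good distortion} would incur; an $H^2$ factor per step would worsen the exponent to $2M+O(1)$, which is weaker than what is needed. This is accomplished by exploiting the monotone-decreasing step-size structure from Lemma~\ref{good sequence}: at $s_j$, comparing $l_f(s_j,d(s_j,s_{j+1}))$ with the previous image distance $d(f(s_{j-1}),f(s_j))\ge l_f(s_j,d(s_j,s_{j-1}))$ across two nested spheres around $s_j$ saves one of the two $L_f$-to-$l_f$ conversions. The density threshold $r/(6M)$ is precisely what is needed to absorb the perturbation from passing from $(z_j)$ to $(s_j)$ while preserving the nested-sphere structure, and the containment $B(x_0,8C_0r)\subset B(x_0,R/2)$ (which holds whenever $r<R/(16C_0)$) ensures that all applications of the dilatation bound take place at admissible scales.
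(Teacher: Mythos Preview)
Your proposal is correct and follows essentially the same route as the paper's proof: pick $x'$ on $S(x_0,8r)$, join $x_0$ to $x'$ by the chain of Lemma~\ref{good sequence}, perturb the interior vertices into $S$ using the $r/(6M)$-density, and then use Lemma~\ref{lemma:good distortion} at each perturbed vertex to propagate the image-distance geometrically along the chain. The paper carries this out with one cosmetic difference: it leaves \emph{both} endpoints unperturbed (setting $x_0'=x_0$ and $x_N'=x'$), so that the final triangle-inequality sum lands directly on $d(f(x'),y_0)$ without your extra endpoint correction at $s_N$; your version is equally valid. Your identification of the main obstacle---that the decreasing step sizes from Lemma~\ref{good sequence} are precisely what is needed so that each application of Lemma~\ref{lemma:good distortion} at $s_j$ compares nested spheres and contributes only one power of $H$---matches the paper's use of the inequality $d(x_{k+1}',x_k')\le d(x_k',x_{k-1}')$ exactly.
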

\begin{proof}
Suppose $d(x,x')=8r$. Choose a sequence $x_k$, $k=1,\dots,N$, $N\leq M_\lambda$ as in the conclusion of Lemma~\ref{good sequence}. By the $r/3$-density of $S$ in $B(x, 8C_0r)$, we may choose for $k =1,\dots, N-1$ points $x_k'\in S\cap B(x_k,r/3)$. Setting $x_0'=x_0$ and $x_N'=x_N$, we then obtain a sequence $x_0',\dots,x_N'$ with $d(x_{k+1}',x_k')\leq d(x_k',x_{k-1}')$. It follows from Lemma~\ref{lemma:good distortion} that $d(y_{k+1}',y_k')\leq Hd(y_k',y_{k-1}')$ for each $k$, so that
\begin{align*}
d(y',y_0)&\leq \sum_{j=1}^Nd(y_j',y_{j-1}')\leq \sum_{j=1}^NH^{j-1}d(y_1,y_0)=\Big(\frac{H^N-1}{H-1}\Big)d(y_1,y_0)\\
&\leq \Big(\frac{H^M-1}{H-1}\Big)d(y_1',y_0)\leq H^2\Big(\frac{H^M-1}{H-1}\Big)l_f(x_0,r),
\end{align*}
with the last inequality resulting from Lemma~\ref{lemma:good distortion} and the fact that 
\begin{align*}
d(x_1',x_0)\leq d(x_0,x_1)+d(x_1,x_1')\leq 2r/3+r/6<r.
\end{align*}
Since this holds for $y'=f(x')$ for arbitrary $x'$ in $S(x_0,8r)$, the
proof is complete.
\end{proof}

\begin{proposition}\label{prop:reverse doubling for dilatation}
For each $\lambda>1$, there is a constant $C_\lambda'>1$, depending only on $\lambda$ and the data, such that if $S$ is $r/(6M)$-dense in $B(x_0,2C_0C_\lambda'r)$, then 
$$l_f(x_0,C_\lambda'r)\geq \lambda L_f(x_0,r).$$
\end{proposition}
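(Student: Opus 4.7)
The plan is to follow the proof of Proposition~\ref{prop:doubling for inverse dilatation} very closely, but with the roles of the domain and codomain interchanged. In that proposition, the connectivity of $Y$ together with the density of $f(S)$ allowed one to pick many well-separated points $y_k$ in $Y$, after which the doubling of $X$ capped the number of their preimages $x_k\in S$. Here, in dual form, I would use the density of $S$ itself (together with the iterated quasisymmetric control supplied by Proposition~\ref{prop:doubling for dilatation}) to locate many well-separated points $x_k$ in $S\cap B(x_0,C_\lambda' r)$, and invoke the doubling of $Y$ to bound the number of their images $y_k=f(x_k)$.

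Precisely, I would choose $C_\lambda'=H(H+1)^{2M}$ with $M=\lceil C\lambda^s+1\rceil$, where $(C,s)$ is the doubling data of $Y$, in exact analogy with Proposition~\ref{prop:doubling for inverse dilatation}. For each $k=1,\dots,M$, select $x_k\in S$ in the annulus $\{x:(H+1)^{2k-1}r\le d(x_0,x)<(H+1)^{2k}r\}$; this is feasible since the density hypothesis gives $S$ as $r/(6M)$-dense in $B(x_0,2C_0C_\lambda' r)$, which comfortably contains all of these annuli. The purpose of the geometric spacing in $X$ is that, after combining the dilatation bound $L_f(x_0,\cdot)\le H\,l_f(x_0,\cdot)$ at $x_0\in S_{H,R}$ with an iteration of Proposition~\ref{prop:doubling for dilatation} across the dyadic scales $r,8r,64r,\dots$, the images $y_k=f(x_k)$ satisfy $d(y_0,y_k)\ge H\,d(y_0,y_{k-1})$; telescoping via the triangle inequality then forces $d(y_j,y_k)>H\,d(y_0,y_j)\ge L_f(x_0,r)$ for $j<k$, so the $y_k$ form an $L_f(x_0,r)$-separated family.

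Granting this separation, I argue by contradiction: suppose $l_f(x_0,C_\lambda' r)<\lambda L_f(x_0,r)$. Since each $x_k\in B(x_0,C_\lambda' r)$, the bound $L_f(x_0,C_\lambda' r)\le H\,l_f(x_0,C_\lambda' r)<\lambda H\,L_f(x_0,r)$ from $x_0\in S_{H,R}$ places all $y_k$ inside $B(y_0,\lambda H\,L_f(x_0,r))$. The doubling of $Y$ then caps the cardinality of any $L_f(x_0,r)$-separated subset of this ball by a constant multiple of $\lambda^s$, and with the explicit choice of $C_\lambda'$ this contradicts $M\ge C\lambda^s+1$, completing the proof.

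The main obstacle I anticipate is ensuring that the geometric separation of $\{y_k\}$ in $Y$ really does hold with the claimed quantitative constant. Although Proposition~\ref{prop:doubling for dilatation} provides the needed doubling of $L_f$ across dyadic scales, neither $l_f$ nor $L_f$ is a priori monotone in the radius, so a careful inductive pasting of the bounds at $r,8r,64r,\dots$, combined with the uniform quasisymmetric control $L_f\le H\,l_f$ at $x_0$, is required to rule out oscillations that would otherwise spoil the chain $d(y_0,y_k)\ge H\,d(y_0,y_{k-1})$.
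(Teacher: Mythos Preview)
Your separation claim $d(y_0,y_k)\ge H\,d(y_0,y_{k-1})$ is the crux, and it cannot be extracted from the tools you invoke. Both the dilatation bound $L_f(x_0,\rho)\le H\,l_f(x_0,\rho)$ and Proposition~\ref{prop:doubling for dilatation} are \emph{upper} bounds on $L_f(x_0,\cdot)$ in terms of $l_f(x_0,\cdot)$; neither yields a lower bound on $l_f(x_0,\text{large})$ in terms of $L_f(x_0,\text{small})$, which is exactly what your chain requires---and which is, in fact, the content of the proposition you are proving. No inductive pasting of inequalities centered at $x_0$ will reverse this direction, so the obstacle you flag in your last paragraph is fatal to the plan as written.

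The paper's remedy is to shift the center. It places $x_k\in S$ in dyadic annuli $B(x_0,4^k r)\setminus B(x_0,2^{2k-1}r)$ and applies Proposition~\ref{prop:doubling for dilatation} \emph{at each $x_k$}, not at $x_0$. Since $B(x_0,r)\subset B(x_k,2^{2k+1}r)$, this yields $L_f(x_0,r)\lesssim l_f(x_k,2^{2k-2}r)$; and since the other $x_j$ lie outside $B(x_k,2^{2k-2}r)$, one gets $d(y_j,y_k)\ge l_f(x_k,2^{2k-2}r)\gtrsim L_f(x_0,r)$. From there your packing argument in $Y$ via doubling is exactly right. A lighter variant of the same recentering also works: apply Lemma~\ref{lemma:good distortion} at $x_j$ (with $x'=x_0$, $x''=x_k$), using that the annular spacing forces $d(x_0,x_j)<d(x_k,x_j)$; this already gives $d(y_j,y_k)>H^{-2}d(y_0,y_j)\ge H^{-4}L_f(x_0,r)$ without invoking Proposition~\ref{prop:doubling for dilatation} at all. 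Either way, the missing idea is to exploit the quasisymmetric control at the points $x_k$ (or $x_j$) rather than only at $x_0$.
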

\begin{proof}
We take $C_\lambda'=4^N$, for some $N$ to be determined. Choose points $x_k\in S\cap \Big(B(x_0,4^kr)\backslash B(x_0,2^{2k-1}r)\Big)$ for $k=1,\dots,N$. By Proposition~\ref{prop:doubling for dilatation}, we have
\begin{align*}
L_f(x_k,2^{2k+1}r)\leq H^{M+2}l_f(x_k,2^{2k-1}r)
\end{align*}
for each $k$. Since $B(x_0,r)\subset B(x_k,2^{2k+1}r)$, we then have
\begin{align*}
L_f(x_0,r)\leq \diam f\big(B(x_k,2^{2k+1}r)\big)\leq 2HL_f(x_k,2^{2k+1}r)\leq 2H^{M+1}l_f(x_k,2^{2k-2}r).
\end{align*}
Since $x_j\not\in B(x_k,2^{2k-2}r)$ for $j\neq k$, we have $d(y_j,y_k)\geq l_f(x_k,2^{2k-2}r)$. Thus $\{y_k\}$ is an $\frac{L_f(x_0,r)}{2H^{M+3}}$-separated subset of $f(B(x,C_\lambda''r))$, whereby
\begin{align*}
N\leq C\Big(\frac{2H^{M+3}\diam f(B(x_0,C_\lambda'r))}{L_f(x_0,r)}\Big)^s\leq C\Big(\frac{4H^{M+5}l_f(x_0,C_\lambda'r)}{L_f(x_0,r)}\Big)^s,
\end{align*}
so that
\begin{align*}
L_f(x_0,r)\leq CN^{-1/s}l_f(x_0,C_\lambda'r),
\end{align*}
with $C$ depending only on $H$ and the data of $X$ and $Y$. Taking $N=[(\lambda C)^s]+1$ completes the proof.
\end{proof}

\begin{coro}\label{coro:double side inclusion}
Under the assumption of Proposition~\ref{prop:reverse doubling for dilatation}, there is some $s>0$ such that 
\begin{align*}
B(x_0,r)\subset U(x_0,s)\subset B(x_0,C_1'r).
\end{align*}
\end{coro}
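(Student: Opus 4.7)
The plan is to choose $s$ in the spectral gap provided by Proposition~\ref{prop:reverse doubling for dilatation}. Apply that proposition with some $\lambda>H$, so that the conclusion reads $l_f(x_0, C_\lambda' r_*) \geq \lambda L_f(x_0, r_*)$ where $r_*$ is chosen to absorb the bounded turning constant $C_0$ of $X$ (concretely, $r_* = C_0 r$, after noting that the density hypothesis on $S$ is preserved up to a harmless enlargement of the constants). Set $C_1' := C_\lambda' \cdot C_0$, and let $s$ be any value in the nonempty interval $\bigl(H L_f(x_0, C_0 r),\, l_f(x_0, C_1' r)\bigr)$. Such a choice is possible precisely because $\lambda > H$.

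For the left inclusion $B(x_0, r) \subset U(x_0, s)$, I would fix $x\in B(x_0, r)$ and use the $C_0$-bounded turning of $X$ to produce a continuum $\gamma$ joining $x_0$ to $x$ with $\diam \gamma \leq C_0 d(x_0, x)$, so that $\gamma \subset B(x_0, C_0 r)$. Lemma~\ref{lemma:good distortion} (applied at the radius $C_0 r$, which requires $C_0 r < R$) yields
\[
f(\gamma) \subset f(B(x_0, C_0 r)) \subset B(f(x_0),\, H L_f(x_0, C_0 r)) \subset B(f(x_0), s),
\]
so $\gamma$ is a connected subset of $f^{-1}(B(f(x_0), s))$ containing $x_0$. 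By definition of the component $U(x_0, s)$, this forces $\gamma \subset U(x_0, s)$, and in particular $x \in U(x_0, s)$.

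For the right inclusion $U(x_0, s) \subset B(x_0, C_1' r)$, I would invoke the path-lifting inclusion~\eqref{btconlift}. Since $x_0 \in S = S_{H,R}$, the injectivity of $f|_{S\cap B(x_0, R/2)}$ combined with the discreteness of $f$ guarantees a connected normal neighborhood $D$ of $x_0$ containing $B(x_0, C_1' r)$ (provided $r$ is small relative to $R$) on which $f^{-1}(\{f(x_0)\})\cap D = \{x_0\}$. The inclusion~\eqref{btconlift} then gives $U(x_0, l_f(x_0, C_1' r))\subset B(x_0, C_1' r)$, and monotonicity of $U(x_0, \cdot)$ together with $s \leq l_f(x_0, C_1' r)$ yields $U(x_0, s)\subset B(x_0, C_1' r)$.

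The main obstacle is the scale bookkeeping: to invoke Proposition~\ref{prop:reverse doubling for dilatation} at the slightly inflated radius $C_0 r$ needed in the bounded-turning argument for the left inclusion, one must verify that the density hypothesis on $S$ from the standing assumption suffices, possibly after absorbing $C_0$ into the choice of $\lambda$. The other technical point — checking that the normal neighborhood condition for~\eqref{btconlift} is satisfied at this scale — is routine given that $x_0\in S$ and $r$ is small enough.
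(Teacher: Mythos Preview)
Your argument is correct and is essentially what the paper has in mind (the paper states the corollary without proof, treating it as immediate from Proposition~\ref{prop:reverse doubling for dilatation}). Your identification of the need for the bounded-turning constant $C_0$ of $X$ in the left inclusion is exactly right: balls in $X$ need not be connected, so one must pass through $B(x_0,C_0 r)$ via a continuum, and this forces the extra factor of $H$ (from Lemma~\ref{lemma:good distortion}) and hence $\lambda>H$ when invoking Proposition~\ref{prop:reverse doubling for dilatation}.

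One simplification worth noting: for the right inclusion you do not need the full force of~\eqref{btconlift} with its normal-neighborhood hypothesis. It is enough to observe that $U(x_0,s)$ is connected and contains $x_0$, so if it were not contained in $B(x_0,C_1'r)$ then (by the intermediate value theorem applied to $d(x_0,\cdot)$) it would contain a point $z$ with $d(x_0,z)=C_1'r$; but then $d(y_0,f(z))\geq l_f(x_0,C_1'r)>s$, contradicting $z\in f^{-1}(B(y_0,s))$. This sidesteps the ``provided $r$ is small relative to $R$'' caveat you flagged, which is otherwise a minor loose end.
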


\section{Proofs of the main results}\label{sec:Proofs of the main results}

\begin{proposition}\label{prop:general prop}
Suppose $X$ is LLC$^n$, and that $f\colon X\to Y$, $S\subset X$, $H$, and $R$ are as in the previous section. Then there are constants $\varepsilon,\delta>0$ depending only on $H$ and the data of $X$ and $Y$ so that if either $S$ is $\delta$-dense at $x_0$, or $f(S)$ is $\varepsilon$-dense at $y_0=f(x_0)$, then $x_0\notin \mathcal{B}_f^{*,l}$.
\end{proposition}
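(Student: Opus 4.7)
The plan is to construct a local left homotopy inverse for $f$ at $x_0$ via Theorem~\ref{thm:key thm}, which immediately gives $x_0\notin \mathcal{B}_f^{*,l}$. The density assumption feeds, through the annular distortion estimates of Section~\ref{sec:Annular distortion}, into the diameter-distance hypothesis of Theorem~\ref{thm:key thm}. The strategy parallels Corollary~\ref{prop:test prop}: build a set $A$ about $x_0$ together with a cover of $\partial A$ by small well-behaved sets $\{U_\alpha\}$, verify the required diameter bound, and apply Theorem~\ref{thm:key thm}. The key difference is that, since we only need $x_0\notin \mathcal{B}_f^{*,l}$ and not the stronger $x_0\notin \mathcal{B}_f$, we bypass Corollary~\ref{coro:Newmann} (and its generalized $n$-manifold hypothesis) and read the conclusion off directly from the homotopy produced by Theorem~\ref{thm:key thm}.

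Concretely, I would fix a small radius $r$ and set $A=U(x_0,r)$. Using the density hypothesis together with Propositions~\ref{prop:doubling for dilatation}--\ref{prop:reverse doubling for dilatation} and Corollary~\ref{coro:double side inclusion} when $S$ is $\delta$-dense at $x_0$, or Proposition~\ref{prop:doubling for inverse dilatation} (governing $H_f^*$) when $f(S)$ is $\varepsilon$-dense at $y_0$, I would choose points $x_\alpha\in S$ close to $\partial A$ and, around each, take $U_\alpha$ to be a small normal domain of $x_\alpha$ on which $f$ is a homeomorphism; such a $U_\alpha$ exists because $x_\alpha\in S$ forces $f$ to be a local homeomorphism there. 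The local injectivity of $f|_S$ together with the local relation $S\cap B(x_\alpha,R/2)=f^{-1}(f(S\cap B(x_\alpha,R/2)))\cap B(x_\alpha,R/2)$ noted at the start of Section~\ref{sec:Annular distortion} yields $U_\alpha=f^{-1}(f(U_\alpha))$ once $U_\alpha$ is chosen small enough. Taking $\delta,\varepsilon$ sufficiently small makes the covering $\{U_\alpha\}$ of $\partial A$ satisfy
\[
\diam U_\alpha<\frac{d(x_0,U_\alpha)}{8\lambda^{2n+1}},
\]
so that for each $x\in\partial A$ the fiber $f^{-1}(\{f(x)\})$ lies inside some $U_\alpha$ and
\[
\diam f^{-1}(\{f(x)\})<\frac{d(x,x_0)}{8\lambda^{2n+1}}.
\]
Applying Theorem~\ref{thm:key thm} on a suitable precompact neighborhood of $\overline{A}$ then produces $g\colon f(A)\to X$ and a homotopy $H\colon[0,1]\times A\to X$ from $\operatorname{id}_A$ to $g\circ f|_A$ satisfying $\diam H([0,1]\times\{x\})\leq 8\lambda^{2n+1}\diam f^{-1}(\{f(x)\})$. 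The preceding inequality then forces $x_0\notin H([0,1]\times\partial A)$, exhibiting $g$ as a local left homotopy inverse of $f$ at $x_0$.

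The most delicate step is the simultaneous verification of all the covering properties of $\{U_\alpha\}$, in particular the ratio $\diam U_\alpha/d(x_0,U_\alpha)<1/(8\lambda^{2n+1})$, for a family whose members must cover every point of $\partial A$. The Section~\ref{sec:Annular distortion} propositions are anchored at basepoints in $S$, whereas $x_0$ itself is not assumed to lie in $S$, so the distortion estimates at $x_0$ have to be obtained by transferring those at nearby $S$-basepoints via the triangle inequality; the resulting additive errors are absorbed into the constants once $\delta$ and $\varepsilon$ are chosen sufficiently small. The two density cases are essentially symmetric through the duality between $H_f$ and $H_f^*$, one side using Propositions~\ref{prop:doubling for dilatation} and~\ref{prop:reverse doubling for dilatation}, the other using Proposition~\ref{prop:doubling for inverse dilatation}.
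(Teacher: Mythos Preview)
Your overall strategy matches the paper's: cover $\partial A$ by small normal domains $U_\alpha$ centered at points $x_\alpha\in S$, verify $\diam U_\alpha<d(x_0,U_\alpha)/(8\lambda^{2n+1})$, and then read off the left homotopy inverse from Theorem~\ref{thm:key thm}. Your observation that one may bypass Corollary~\ref{coro:Newmann} (and hence the generalized-manifold hypothesis) and conclude $x_0\notin\mathcal{B}_f^{*,l}$ directly from the homotopy of Theorem~\ref{thm:key thm} is exactly what the paper does; this is also what underlies Section~1.5.

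There is, however, a genuine error in your construction of the $U_\alpha$. You write that ``$x_\alpha\in S$ forces $f$ to be a local homeomorphism there'' and take $U_\alpha$ to be a normal domain on which $f$ is a homeomorphism. This is false: membership in $S=S_{H,R}$ says only that $H_f(x_\alpha,r)<H$ for $r<R$, which is a dilatation bound and in no way precludes $x_\alpha\in\mathcal{B}_f$. Indeed, if $x_\alpha\in S$ implied $x_\alpha\notin\mathcal{B}_f$, the whole proposition would be vacuous, since $x_0$ itself is (in the application to Theorem~\ref{thm:Theorem 1.1}) a point of $S$. What \emph{is} true, and what the paper uses, is that $f|_{S\cap B(x_\alpha,R/2)}$ is injective and $S\cap B(x_\alpha,R/2)=f^{-1}(f(S\cap B(x_\alpha,R/2)))\cap B(x_\alpha,R/2)$; from these two facts one deduces that any \emph{normal} domain $U_\alpha=U(x_\alpha,s_\alpha)$ (the $x_\alpha$-component of a preimage of a ball) satisfies $U_\alpha=f^{-1}(f(U_\alpha))$ inside the ambient normal neighborhood, without $f|_{U_\alpha}$ being injective. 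The paper takes $U_\alpha=U(x_\alpha,\varepsilon r)$ in the $\varepsilon$-dense case, and in the $\delta$-dense case uses Corollary~\ref{coro:double side inclusion} (applied at $x_\alpha$, not at $x_0$) to sandwich $B(x_\alpha,\delta r)\subset U(x_\alpha,s_\alpha)\subset B(x_\alpha,C_1'\delta r)$. Once you replace your homeomorphic $U_\alpha$ with these normal domains, your argument goes through and coincides with the paper's.

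A smaller point: your worry that ``the distortion estimates at $x_0$ have to be obtained by transferring those at nearby $S$-basepoints via the triangle inequality'' is unnecessary. The Section~\ref{sec:Annular distortion} propositions are never applied at $x_0$; they are applied at the $x_\alpha\in S$, and the only thing needed at $x_0$ is the elementary inequality $d(x_0,U_\alpha)\geq r-\diam U_\alpha$, which comes for free once $U_\alpha$ meets $\partial A$.
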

\begin{proof}
If $f(S)$ is $\varepsilon r$-dense in $B(y_0,3r)$, then there is a family of balls $B(y_\alpha,\varepsilon r)$ such that $\cup_{\alpha}B(y_{\alpha},\varepsilon r)\supset \partial U(x_0,r)$. Let  $U(x_\alpha,\varepsilon r)$ be the $x_\alpha$-component of $f^{-1}(B(y_\alpha,\varepsilon r))$, with $x_\alpha\in S$. Then $\cup_\alpha U(x_\alpha,\varepsilon r)\supset\partial U(x_0,r)$. If $\varepsilon$ is small enough, then we may assume also that $y_0\notin B(y_\alpha,16\lambda^{2n+1}\varepsilon r)$, so that
\begin{align*}
x_0\notin B\big(x_\alpha,16\lambda^{2n+1}L_f^*(x_\alpha,\varepsilon r)\big)\supset B\big(x_\alpha,8\lambda^{2n+1}\diam U(x_\alpha,\varepsilon r)\big),
\end{align*} 
whereby
\begin{align*}
\diam U(x_\alpha,\varepsilon r)\leq d(x_0,U(x_\alpha,\varepsilon r))/(8\lambda^{2n+1}).
\end{align*}
It follows from Corollary~\ref{prop:test prop} that $x_0\notin \mathcal{B}_f^{*,l}$.

We argue similarly for the case that $S$ is $\delta r$-dense in $B(x_0,3Cr)$. In this case, we choose balls $B(x_\alpha,\delta r)$ covering $\partial B(x_0,r)$. Assuming that balls each intersect $\partial B(x_0,r)$, we may choose $\delta$ small enough so that 
\begin{align*}
\diam B(x_\alpha,C_1'\delta r)\leq d(x_0,B(x_\alpha,C_1'\delta r))/(8\lambda^{2n+1}).
\end{align*}
By Corollary~\ref{coro:double side inclusion}, we have some $s_\alpha>0$ such that $B(x_\alpha,\delta r)\subset U(x_\alpha,s_\alpha)\subset B(x_\alpha,C_1'\delta r))$. Applying Corollary~\ref{prop:test prop} with $U_\alpha=U(x_\alpha,s_\alpha)$ proves again that $x_0\notin \mathcal{B}_f^{*,l}$.

Lastly, in the case that $H_f^*(x)\leq H$ for each $r'<R'$ and $x\in S$, we repeat the argument from the previous paragraph, this time choosing $s_\alpha= L_f(x_\alpha,\delta r)$, so that $B(x_\alpha,\delta r)\subset U(x_\alpha,s_\alpha)\subset B(x_\alpha,H\delta r)$. Choosing $\delta$ small enough so that 
\begin{align*}
\diam B(x_\alpha,H\delta r)\leq d(x_0,B(x_\alpha,H\delta r))/(8\lambda^{2n+1})
\end{align*}
to complete the proof.
\end{proof}

\begin{proof}[Proof of Theorem~\ref{thm:Theorem 1.1}]
Let $S_{H,R}=\{x\in X: H_f(x,r)<H\ \text{for all } r<R\}$. If $x_0\in S_{H,R}\cap \mathcal{B}_f^{*,l}$, then by Proposition~\ref{prop:general prop}, $S_{H,R}$ is not $\delta_H$-dense at $x_0$. Thus $S_{H,R}\cap \mathcal{B}_f^{*,l}$ is $\delta_H$-porous, with $\delta_H$ depending only on $H$ and the data of $X$ and $Y$. The same argument proves that $f\big(S_{H,R}\cap \mathcal{B}_f^{*,l}\big)$ is $\varepsilon_H$-porous, with $\varepsilon_H$ depending only on $H$ and the data of $X$ and $Y$. 
\end{proof}

\section{V\"ais\"al\"a's inequality}\label{subsec:V\"ais\"al\"a's inequality}
In this section, we prove the V\"ais\"al\"a's inequality in general metric spaces. In the Euclidean setting, this inequality has been first proved by V\"ais\"al\"a~\cite{v72}, and it plays an important role in the value distributional results for quasiregular mappings between Euclidean spaces; see for instance the Zorich--Gromov global homeomorphism theorem~\cite{g99,hp04}, the Bloch's theorem~\cite{r07}, the Rickman--Picard theorem~\cite{r80}, and the defect relation~\cite{r81}.

Let $f\colon X\to Y$ be a proper discrete open mapping between Ahlfors $Q$-regular, LLC
metric spaces, such that $H_f(x)<\infty$ everywhere on X, and $H_f(x)\leq H$ for $\mathcal{H}^Q$-almost
every $x\in X$, and suppose $N_f(x)<\infty$. For each family $\Gamma$ of curves in $X$, we have the $K_O$-
and $K_I$-inequalities
\begin{align}\label{eq:KO and KI}
	\Modd_Q\big(f(\Gamma)\big)/K_I\leq \Modd_Q(\Gamma)\leq K_ON_f\Modd_Q\big(f(\Gamma)\big).
\end{align}
The right hand of these inequalities was established by Cristea~\cite{c06}, and more recently, the left hand has been established by the second author in~\cite{w15} (see also~\cite[Theorem A]{gw16}).
Moreover, in the case that the image of the branch set is zero, the left hand inequality
(the Poletsky's inequality) may be upgraded as follows:

\begin{definition}[V\"ais\"al\"a's inequality]\label{def:vaisala inequality}
	We say that $f$ satisfies \textit{V\"ais\"al\"a's inequality} with constant $K_I$ if it satisfies the following condition: Suppose $m\in \mathbb{N}$, and $\Gamma$ and $\Gamma'$ are curve families in $X$ and $Y$ respectively, such that for each $\gamma'\in \Gamma'$, there are curves $\gamma_1,\dots,\gamma_m\in \Gamma$ such that $f(\gamma_k)$ is a subcurve of $\gamma'$ for each $k$, and for each $t\in [0,l(\gamma)]$ and each $x\in X$, we have $\card\{k:\gamma_k(t)=x\}\leq i(x,f)$. Then
	\begin{align*}
		\Modd_Q(\Gamma')\leq K_I\Modd_Q(\Gamma)/m.
	\end{align*}
\end{definition}

\begin{theorem}[Theorem A,~\cite{gw16}]\label{thm:Polesky to Vaisala}
	Suppose $f\colon X\to Y$ is a discrete open mapping between two metric measure spaces $(X, \mu)$ and $(Y, \nu)$, such that for some $K_I$, $f$ satisfies the Poletsky's inequality
	\begin{align*}
		\Modd_Q\big(f(\Gamma)\big)\leq K_I\Modd_Q(\Gamma)
	\end{align*}
	for every curve family $\Gamma$ in $X$, and such that $\nu\big(f(\mathcal{B}_f)\big)=0$. Then $f$ satisfies V\"ais\"al\"a's inequality with the same constant $K_I$.
\end{theorem}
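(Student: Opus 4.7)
The plan is to prove V\"ais\"al\"a's inequality by constructing, from any admissible function $\rho$ for $\Gamma$, an admissible function $\tilde\rho$ for $\Gamma'$ (up to a curve family of zero $Q$-modulus) such that
\[
\int_Y \tilde\rho^Q \, d\nu \leq \frac{K_I}{m} \int_X \rho^Q \, d\mu.
\]
Taking the infimum over admissible $\rho$ and using Poletsky's inequality in a measure-theoretic form will then yield the desired bound. The natural candidate is the multiplicity-averaged pullback
\[
\tilde\rho(y) = \frac{1}{m} \sum_{x \in f^{-1}(y)} \rho(x),
\]
which makes sense provided $\card f^{-1}(y) < \infty$.

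The first step is a Fuglede-type reduction. Since $\nu(f(\mathcal{B}_f)) = 0$, the subfamily $\Gamma_0' \subset \Gamma'$ of curves that spend positive length in $f(\mathcal{B}_f)$ has $\Modd_Q(\Gamma_0') = 0$. Likewise, by Fuglede's lemma, we may discard further modulus-zero subfamilies on which either the lifts $\gamma_k$ behave pathologically or the integral $\int_{\gamma'} \tilde\rho \, ds$ fails to be well-defined by a limit of Riemann sums. On the remaining generic curves $\gamma' \in \Gamma' \setminus \Gamma_0'$, the map $f$ is a local homeomorphism along $\gamma'$ outside a set of arclength zero, so each lift $\gamma_k$ may be locally parameterized by arclength on $\gamma'$ via the inverse branches of $f$.

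The second step is to verify admissibility of $\tilde\rho$. For $\gamma' \in \Gamma' \setminus \Gamma_0'$, the counting condition $\card\{k : \gamma_k(t)=x\} \leq i(x,f)$, together with the fact that the inverse branches of $f$ are locally isometric in the ``point-counting'' sense, gives
\[
\int_{\gamma'} \tilde\rho \, ds_Y \geq \frac{1}{m} \sum_{k=1}^{m} \int_{\gamma_k} \rho \, ds_X \geq \frac{1}{m} \cdot m = 1,
\]
using admissibility of $\rho$ for each $\gamma_k \in \Gamma$. The counting condition is exactly what is needed to ensure that summing $\rho(x)$ over $x \in f^{-1}(y)$ dominates the contribution from the $m$ lifts without over-counting.

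The third step is to bound the $Q$-norm. Applying Jensen's inequality pointwise,
\[
\tilde\rho(y)^Q \leq m^{-Q} N_f(y)^{Q-1} \sum_{x \in f^{-1}(y)} \rho(x)^Q,
\]
and then integrating against $\nu$, the key identity needed is a measure-theoretic form of Poletsky's inequality, namely
\[
\int_Y N_f(y)^{Q-1} \sum_{x \in f^{-1}(y)} g(x) \, d\nu(y) \leq K_I \int_X g \, d\mu \qquad \text{for all Borel } g \geq 0.
\]
This is the dual/integral version of the modulus inequality $\Modd_Q(f(\Gamma)) \leq K_I \Modd_Q(\Gamma)$, derived by testing against curve families indexed by level sets of $g$ and passing to the limit; the hypothesis $\nu(f(\mathcal{B}_f)) = 0$ is crucial here to ensure no mass of $\nu$ is concentrated where the lifts behave badly. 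Applying this with $g = \rho^Q$ and combining with the admissibility step gives the required bound, and hence $\Modd_Q(\Gamma') \leq K_I m^{-1} \Modd_Q(\Gamma)$.

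The hard part will be making the pullback of arclength in the admissibility step rigorous in the absence of a differentiable structure; this is where the assumption $\nu(f(\mathcal{B}_f)) = 0$ (via Fuglede's lemma) does the heavy lifting, replacing the classical appeal to the area formula and normal neighborhoods used in Rickman's Euclidean proof. The integral-form Poletsky inequality used in the final step is essentially equivalent to the modulus-form hypothesis, but extracting it cleanly requires a careful exhaustion by simple curve families of controlled modulus.
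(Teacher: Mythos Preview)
The paper does not prove this theorem; it is quoted from the companion paper~\cite{gw16} and used here as a black box. So there is no ``paper's own proof'' to compare against. That said, your proposal has a genuine gap that would prevent it from working even as a standalone argument.

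The problem is the admissibility step. Your test function $\tilde\rho(y)=\frac{1}{m}\sum_{x\in f^{-1}(y)}\rho(x)$ carries no factor accounting for how $f$ distorts arclength. The inequality you assert,
\[
\int_{\gamma'}\tilde\rho\,ds_Y\ \geq\ \frac{1}{m}\sum_{k=1}^m\int_{\gamma_k}\rho\,ds_X,
\]
would require that arclength along $\gamma'$ in $Y$ dominate arclength along each lift $\gamma_k$ in $X$. Nothing in the hypotheses gives that: $f$ is merely discrete and open and satisfies a modulus inequality, with no Lipschitz or differentiability control. The phrase ``inverse branches are locally isometric in the point-counting sense'' does not supply the missing comparison; local homeomorphisms between metric spaces need not preserve, or even quasi-preserve, length. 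In the classical Euclidean proof the pushforward carries a factor of the operator norm of $(Df)^{-1}$ precisely to make this step work, and the $Q$-norm bound then uses the pointwise distortion inequality. You have dropped that factor and tried to absorb everything into an ``integral form of Poletsky'', but the identity you write down in step three,
\[
\int_Y N_f(y)^{Q-1}\sum_{x\in f^{-1}(y)}g(x)\,d\nu(y)\ \leq\ K_I\int_X g\,d\mu,
\]
is neither a known reformulation of the modulus-form Poletsky inequality nor derivable from it by the testing procedure you sketch. In short, the length-distortion information that makes the argument go has to live \emph{somewhere}, and in your outline it lives nowhere.
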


Combining Theorem~\ref{thm:Polesky to Vaisala} and Corollary~\ref{coro:Corollary 1.2} with the first half of inequality~\eqref{eq:KO and KI}, we obtain the following very general V\"ais\"al\"a's inequality.

\begin{theorem}[V\"ais\"al\"a's inequality]\label{thm:Vaisala inequality}
	Let $X$ and $Y$ be Ahlfors $Q$-regular generalized $n$-manifolds, where $X$ is LLC$^n$
	and $Y$ is LLC, and suppose $f\colon X\to Y$ is discrete and open, with $H_f(x)<\infty$ for all $x\in X$,
	and $H_f(x)\leq H$ for $\mathcal{H}^Q$-almost every $x\in X$.
	
	Then $f$ satisfies V\"ais\"al\"a's inequality for some constant $K_I$ depending only on $H$ and the
	data of $X$ and $Y$.
\end{theorem}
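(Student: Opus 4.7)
The plan is to assemble the three ingredients already cited in the paragraph preceding the theorem, in the following order. First, I would verify that Corollary~\ref{coro:Corollary 1.2} applies in the present setting. By hypothesis $H_f(x)<\infty$ for every $x\in X$, so in the notation of Section~\ref{sec:introdcution} we have $X=S_f$; moreover, since $X$ and $Y$ are Ahlfors $Q$-regular, the measures $\mathcal{H}^Q|_X$ and $\mathcal{H}^Q|_Y$ are (globally) doubling, hence in particular locally doubling. Corollary~\ref{coro:Corollary 1.2} therefore yields
\begin{equation*}
\mathcal{H}^Q\bigl(\mathcal{B}_f\bigr)=\mathcal{H}^Q\bigl(f(\mathcal{B}_f)\bigr)=0.
\end{equation*}
Note that this step is the only place where the full strength of the main theorem of this paper is used; the LLC$^n$ assumption on $X$ and the generalized $n$-manifold structure enter precisely here, via Theorem~\ref{thm:Theorem 1.1}.

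Second, I would invoke the left-hand (Poletsky) inequality in~\eqref{eq:KO and KI} to obtain
\begin{equation*}
\Modd_Q\bigl(f(\Gamma)\bigr)\leq K_I\,\Modd_Q(\Gamma)
\end{equation*}
for every curve family $\Gamma$ in $X$, with $K_I$ depending only on $H$ and the data. The hypotheses needed for this inequality, namely that $f$ is a discrete open map between Ahlfors $Q$-regular LLC generalized $n$-manifolds with $H_f$ finite everywhere and essentially bounded by $H$, are exactly those assumed in the statement of Theorem~\ref{thm:Vaisala inequality}; this inequality is the metric-space Poletsky inequality established in~\cite{w15} (see also~\cite{gw16}).

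Third, with Poletsky's inequality in hand and $\mathcal{H}^Q(f(\mathcal{B}_f))=0$ from Step 1, I would apply Theorem~\ref{thm:Polesky to Vaisala} with $\mu=\mathcal{H}^Q|_X$ and $\nu=\mathcal{H}^Q|_Y$. The conclusion is that $f$ satisfies V\"ais\"al\"a's inequality with the same constant $K_I$, which is the desired assertion. The only genuine obstacle in this scheme is the first step: without the null-set conclusion for $f(\mathcal{B}_f)$, there is no way to upgrade Poletsky's inequality (which loses track of multiplicities on the branch set) to V\"ais\"al\"a's inequality (which keeps precise track of the local index $i(x,f)$ along lifts). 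Once Corollary~\ref{coro:Corollary 1.2} is available, the remaining two steps are immediate applications of black-box results from~\cite{w15} and~\cite{gw16}.
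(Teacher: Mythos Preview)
Your proposal is correct and follows exactly the approach the paper takes: the theorem is stated immediately after the sentence ``Combining Theorem~\ref{thm:Polesky to Vaisala} and Corollary~\ref{coro:Corollary 1.2} with the first half of inequality~\eqref{eq:KO and KI}, we obtain the following very general V\"ais\"al\"a's inequality,'' and your three steps are precisely these three ingredients in the same order. Your added commentary on why each hypothesis is needed (in particular that the LLC$^n$ and generalized-manifold assumptions enter only through Corollary~\ref{coro:Corollary 1.2}) is accurate and a useful elaboration of what the paper leaves implicit.
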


A path-lifting theorem of Rickman~\cite{r73} implies the following corollary.

\begin{coro}\label{coro:normal vaisala inequality}
	Let $X$, $Y$, and $f$ be as in Theorem~\ref{thm:Vaisala inequality}, and suppose $U\subset X$ is a normal domain. Then for every curve family $\Gamma'$ in $f(U)$,
	\begin{align*}
		\Modd_Q(\Gamma')\leq K_I\Modd_Q(\Gamma)/N_f(U).
	\end{align*}
\end{coro}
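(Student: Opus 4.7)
\medskip

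\noindent\textbf{Proof proposal.} The strategy is to reduce the corollary directly to Theorem~\ref{thm:Vaisala inequality} by constructing, for a given $\Gamma'$ in $f(U)$, a family $\Gamma$ of liftings in $U$ that satisfies the hypotheses of V\"ais\"al\"a's inequality with $m=N_f(U)$. The key ingredient for producing these liftings is Rickman's path-lifting theorem~\cite{r73}, which applies because $f$ is discrete and open and $U$ is a normal domain; the normality of $U$ guarantees that maximal liftings starting in $U$ remain in $U$.

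First I would fix $\Gamma'$ in $f(U)$, and for each $\gamma'\in\Gamma'$ proceed as follows. Since $U$ is a normal domain, $f|_U\colon U\to f(U)$ is a proper branched covering and for every $y\in f(U)$,
$$\sum_{x\in f^{-1}(y)\cap U} i(x,f)=N_f(U).$$
For a fixed starting point $y_0\in|\gamma'|$, enumerate the preimages $f^{-1}(y_0)\cap U=\{x_1,\dots,x_p\}$ with multiplicities $i(x_j,f)$ summing to $N_f(U)$. Rickman's theorem yields, for each $j$ and each ``slot" $1\leq s\leq i(x_j,f)$, a maximal lifting $\gamma_{j,s}\colon [a_{j,s},b_{j,s}]\to U$ of (a subcurve of) $\gamma'$ starting at $x_j$, in such a way that the total family of liftings $\{\gamma_{j,s}\}$ covers each point of $U$ lying over $|\gamma'|$ with multiplicity exactly equal to its local index. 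Relabeling these $N_f(U)$ liftings as $\gamma_1,\dots,\gamma_{N_f(U)}$, by construction we obtain curves in $U$ such that $f\circ\gamma_k$ is a subcurve of $\gamma'$ for every $k$, and
$$\card\{k:\gamma_k(t)=x\}\leq i(x,f)$$
for every $t$ and every $x\in X$. Let $\Gamma$ denote the union of all such liftings, taken over $\gamma'\in\Gamma'$.

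At this point the triple $(\Gamma,\Gamma',m)$ with $m=N_f(U)$ satisfies precisely the hypothesis in Definition~\ref{def:vaisala inequality}. Invoking Theorem~\ref{thm:Vaisala inequality} gives
$$\Modd_Q(\Gamma')\leq K_I\Modd_Q(\Gamma)/N_f(U),$$
which is the desired estimate.

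The main obstacle I anticipate is a careful bookkeeping of the Rickman liftings so that the multiplicity bound $\card\{k:\gamma_k(t)=x\}\leq i(x,f)$ holds simultaneously along the whole curve rather than just at the starting time; this forces one to use the full strength of Rickman's theorem, producing a system of maximal liftings that is compatible at every parameter value, not merely at the initial point. Once this combinatorial/topological step is handled, the rest of the argument is a direct application of V\"ais\"al\"a's inequality, with no further analytic input.
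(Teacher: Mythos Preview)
Your proposal is correct and follows exactly the approach indicated in the paper: the paper gives no detailed argument for this corollary, stating only that ``A path-lifting theorem of Rickman~\cite{r73} implies the following corollary,'' and your construction of the $N_f(U)$ liftings via Rickman's maximal-lifting theorem, followed by a direct application of Theorem~\ref{thm:Vaisala inequality} with $m=N_f(U)$, is precisely what is intended. The ``obstacle'' you flag concerning the multiplicity bound along the entire curve is not an additional difficulty but is exactly the content of Rickman's theorem on maximal sequences of $f$-liftings, so once that result is invoked the proof is complete.
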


As a consequence of Corollary~\ref{coro:normal vaisala inequality}, we may generalize Theorem~\ref{thm:Loewner case bound on inverse dilatation} with the exact argument from~\cite{mrv71}.

\begin{proof}[Proof of Theorem~\ref{thm:Loewner case bound on inverse dilatation}]
	The argument is the same as that of~\cite[Theorem 5.2]{mrv71}. We repeat the
	argument here for the reader's convenience, and to assure that everything generalizes as
	appropriate.
	
	Fix $x\in X$, $y=f(x)$. Let $r>0$, $L^*=L_f^*(x,r)$, $L=L_f(x,L^*)$, $l^*=l_f^*(x,r)$ and $l=l_f(x,L^*)$. Abbreviate $U_s=U(x,s)$, and $B_s=B(x,s)$. We may assume that $r$ is chosen small enough so that $f$ is injective on $\{x\in X: i(x,f)=i(x_0,f)\}\cap U_L$. Let $\Gamma_1$ be the family of curves joining $U_l$ to $\Omega\backslash U_r$ and $\Gamma_2$ the family of curves joining $U_r$ to $\Omega\backslash U_L$. Note that $\bdary U_l$ and $\bdary U_r$ meet $S(x,l^*)$, and that $\bdary U_r$ and $\bdary U_L$ meet $S(x,L^*)$. The Loewner condition implies that 
	\begin{equation*}
		\min\{\modulus_Q(\Gamma_1),\modulus_Q(\Gamma_2)\}\geq a>0.
	\end{equation*}
	By the $K_O$-inequality,
	\begin{equation*}
		\modulus_Q(\Gamma_1)\leq cK_Oi(x,f)\big(\log\frac{r}{l}\big)^{1-Q},
	\end{equation*}
	and 
	\begin{equation*}
		\modulus_Q(\Gamma_2)\leq cK_Oi(x,f)\big(\log\frac{L}{r}\big)^{1-Q}.
	\end{equation*}
	On the other hand, V\"ais\"al\"a's inequality from Corollary~\ref{coro:normal vaisala inequality} gives 
	\begin{align*}
		K_I\modulus_Q(\Gamma(U_l,\Omega\backslash U_L))&\geq i(x,f)\modulus_Q(\Gamma(f(U_l),f(\Omega\backslash U_L)))\\
		&\geq ci(x,f)\big(\log\frac{L}{l}\big)^{1-Q}.
	\end{align*}
	Note also that
	\begin{equation*}
		\modulus_Q(\Gamma(U_l,\Omega\backslash U_L))\leq c\big(\log\frac{L^*}{l^*}\big)^{1-Q}.
	\end{equation*}
	The claim follows by combining the above estimates. 
\end{proof}

\begin{remark}\label{rmk:on Coro 1.5}
	Our argument here for the dimension estimate in Corollary~\ref{coro:Heinonen-Rickman} is somewhat more direct
	than the one from \cite{bh04} - by obtaining an index-independent porosity result, we avoided the need to show the slightly stronger result of porosity for the set of all branch points of high index, as was done in \cite{bh04}. We believe that with V\"ais\"al\"a's inequality, Theorem~\ref{thm:Vaisala inequality}, the arguments in \cite{bh04} likely generalize, with some care, but we leave such investigations to the interested reader, as they are somewhat technical, and unnecessary for the dimension estimate.
\end{remark}


\begin{thebibliography}{99}
	
\bibitem{bkr07}
Z. Balogh, P. Koskela and S. Rogovin, \textit{Absolute continuity of quasiconformal mappings on curves}, Geom. Funct. Anal. 17 (2007), no. 3, 645-664.	
	

\bibitem{bi83}
B. Bojarski and T. Iwaniec, \textit{Analytical foundations of the theory of quasiconformal mappings in $\bR^n$}, Ann. Acad. Sci. Fenn. Ser. A I Math. 8 (1983), no. 2, 257-324.
	
	
\bibitem{bh04}
M. Bonk and J. Heinonen, \textit{Smooth quasiregular mappings with branching}, Publ. Math. Inst. Hautes\'etudes Sci. No. 100 (2004), 153-170.

\bibitem{bhr01}
M. Bonk, J. Heinonen and S. Rohde, \textit{Doubling conformal densities}, J. Reine Angew. Math. 541 (2001), 117-141.

\bibitem{c06}
M. Cristea, \textit{Quasiregularity in metric spaces}, Rev. Roumaine Math. Pures Appl. 51 (2006), no. 3, 291-310. 




\bibitem{f50}
E.E. Floyd, \textit{Some characterizations of interior maps}, Ann. of Math. (2) 51, (1950). 571-575. 

\bibitem{gv72}
F.W. Gehring and J. V\"ais\"al\"a, \textit{Hausdorff dimension and quasiconformal mappings},  J. London Math. Soc. (2) 6 (1973), 504-512. 

\bibitem{g81}
M. Gromov, \textit{Groups of polynomial growth and expanding maps}, Inst. Hautes \'Etudes Sci. Publ. Math. No. 53 (1981), 53-73.

\bibitem{g81b}
M. Gromov, \textit{Curvature, diameter and Betti numbers}, Comment. Math. Helv. 56 (1981), no. 2, 179-195. 

\bibitem{g83}
M. Gromov, \textit{Filling Riemannian manifolds}, J. Differential Geom. 18 (1983), no. 1, 1-147.

\bibitem{g99}
M. Gromov, \textit{Metric Structures for Riemannian and Non-Riemannian Spaces}, Progr. Math., 152. Birkh\"auser Boston, Boston, MA, 1999.

\bibitem{gp88}
K. Grove and P. Petersen V, \textit{Bounding homotopy types by geometry}, Ann. of Math. (2) 128 (1988), no. 1, 195-206.

\bibitem{gpw90}
K. Grove, P.Petersen V and J.Y.Wu, \textit{Geometric finiteness theorems via controlled topology}, Invent. Math. 99 (1990), no. 1, 205-213.



\bibitem{gw16}
C.Y. Guo and M. Williams, \textit{Geometric function theory: the art of pullback factorization}, arXiv-preprint, https://arxiv.org/abs/1611.02478, 2016. 

\bibitem{h01}
J. Heinonen, \textit{Lectures on analysis on metric spaces}, Springer-Verlag, New York, 2001.

\bibitem{h02}
J. Heinonen, \textit{The branch set of a quasiregular mapping}, Proceedings of the International Congress of Mathematicians, Vol. II (Beijing, 2002), 691-700, Higher Ed. Press, Beijing, 2002.




\bibitem{hk95}
J. Heinonen and P. Koskela, \textit{Definitions of quasiconformality}, Invent. Math. 120 (1995), 61-79.

\bibitem{hk98}
J. Heinonen and P. Koskela, \textit{Quasiconformal maps in metric spaces with controlled geometry}, Acta Math. 181 (1998), 1-61.

\bibitem{hkst15}
J. Heinonen, P. Koskela, N. Shanmugalingam and J.T. Tyson, \textit{Sobolev spaces on metric measure spaces:
an approach based on upper gradients}, Cambridge Studies in Advanced Mathematics Series, 2015.

\bibitem{hr98}
J. Heinonen and S. Rickman, \textit{Quasiregular maps $\textbf{S}^3\to \textbf{S}^3$ with wild branch sets}, Topology 37 (1998), no. 1, 1-24. 

\bibitem{hr02}
J. Heinonen and S. Rickman, \textit{Geometric branched covers between generalized manifolds}, Duke Math. J. 113 (2002), no. 3, 465-529.

\bibitem{hs97}
J. Heinonen and S. Semmes, \textit{Thirty-three yes or no questions about mappings, measures, and metrics}, Conform. Geom. Dyn. 1 (1997), 1-12

\bibitem{hs02}
J. Heinonen and D. Sullivan, \textit{On the locally branched Euclidean metric gauge}, Duke Math. J. 114 (2002), no. 1, 15-41.


\bibitem{hp04}
I. Holopainen and P. Pankka, \textit{Mappings of finite distortion: global homeomorphism theorem}, Ann. Acad. Sci. Fenn. Math. 29 (2004), no. 1, 59-80.

\bibitem{h65}
S.T. Hu, \textit{Theory of retracts}, Wayne State University Press, Detroit 1965.


\bibitem{ka02}
S.~Kallunki, \textit{Mappings of finite distortion: the metric definition}, Dissertation, University of Jyv\"askyl\"a, Jyv\"askyl\"a, 2002. Ann. Acad. Sci. Fenn. Math. Diss. No. 131 (2002).

\bibitem{km03}
P. Koskela and J. Mal\'y, \textit{Mappings of finite distortion: the zero set of the Jacobian}, J. Eur. Math. Soc. (JEMS) 5 (2003), no. 2, 95-105.





\bibitem{mrv69}
O. Martio, S. Rickman and J. V\"ais\"al\"a, \textit{Definitions for quasiregular mappings}, Ann. Acad. Sci. Fenn. Ser. A I No. 448 (1969), 40 pp.

\bibitem{mrv70}
O. Martio, S. Rickman and J. V\"ais\"al\"a, \textit{Distortion and singularities of quasiregular mappings}, Ann. Acad. Sci. Fenn. Ser. A I No. 465 (1970), 13 pp.

\bibitem{mrv71}
O. Martio, S. Rickman and J. V\"ais\"al\"a, \textit{Topological and metric properties of quasiregular mappings}, Ann. Acad. Sci. Fenn. Ser. A I No. 488 (1971), 31 pp.

\bibitem{mv88}
O. Martio and J. V\"ais\"al\"a, \textit{Elliptic equations and maps of bounded length distortion}, Math. Ann. 282 (1988), no. 3, 423-443.

\bibitem{mr83}
L.F. McAuley and E.E. Robinson, \textit{On Newman's theorem for finite-to-one open mappings on manifolds}, Proc. Amer. Math. Soc. 87 (1983), no. 3, 561-566. 
 
\bibitem{or09}
J. Onninen and K. Rajala, \textit{Quasiregular mappings to generalized manifolds}, J. Anal. Math. 109 (2009), 33-79.


\bibitem{p90}
P. Petersen V, \textit{A finiteness theorem for metric spaces}, J. Differential Geom. 31 (1990), no. 2, 387-395. 

\bibitem{r07}
K. Rajala, \textit{Bloch's theorem for mappings of bounded and finite distortion}, Math. Ann. 339 (2007), no. 2, 445-460.



\bibitem{re89}
Yu.G. Reshetnyak, \textit{Space mappings with bounded distortion}, Translations of Mathematical Monographs, 73. American Mathematical Society, Providence, RI, 1989.

\bibitem{r73}
S. Rickman, \textit{Path lifting for discrete open mappings}, Duke Math. J. 40 (1973), 187-191.

\bibitem{r80}
S. Rickman,  \textit{On the number of omitted values of entire quasiregular mappings}, J. Analyse Math. 37 (1980), 100-117. 

\bibitem{r81}
S. Rickman, \textit{A defect relation for quasimeromorphic mappings}, Ann. of Math. (2) 114 (1981), no. 1, 165-191.

\bibitem{r93}
S. Rickman, \textit{Quasiregular Mappings}, Ergeb. Math. Grenzgeb. (3) 26, Springer, Berlin, 1993.

\bibitem{s75}
J. Sarvas, \textit{The Hausdorff dimension of the branch set of a quasiregular mapping}, Ann. Acad. Sci. Fenn. Ser. A I Math. 1 (1975), no. 2, 297-307.

\bibitem{s96}
S. Semmes, \textit{Finding curves on general spaces through quantitative topology, with applications for Sobolev and Poincar\'e inequalities}, Selecta Math. (N.S.) 2 (1996), 155-295.




\bibitem{tv80}
P. Tukia and J. V\"ais\"al\"a, \textit{Quasisymmetric embeddings of metric spaces}, Ann. Acad. Sci. Fenn. Ser. A I Math. 5 (1980), no. 1, 97-114.

\bibitem{v66}
J. V\"ais\"al\"a, \textit{Discrete open mappings on manifolds}, Ann. Acad. Sci. Fenn. Ser. A I Math. 1966, no. 392.

\bibitem{v72}
J. V\"ais\"al\"a, \textit{Modulus and capacity inequalities for quasiregular mappings}, Ann. Acad. Sci. Fenn. Ser. A I No. 509 (1972), 14 pp. 



\bibitem{w14}
M. Williams, \textit{Dilatation, pointwise Lipschitz constants, and condition $N$ on curves}, Michigan Math. J. 63 (2014), no. 4, 687-700. 

\bibitem{w15}
M. Williams, \textit{Definition of quasiregularity in metric measure spaces}, preprint 2015.

\end{thebibliography}
\end{document}